\newcommand{\id}{\mathbbm{1}}
\newcommand{\norm}[1]{\left\Vert #1 \right\Vert}
\newcommand{\R}{\mathbb{R}}
\newcommand{\Pp}{\mathbb{P}}
\newcommand{\DD}{\mathcal{D}}
\newcommand{\DDD}{\widetilde{\mathcal{D}}}
\newcommand{\MM}{\mathcal{M}}
\newcommand{\PP}{\mathcal{P}}
\newcommand{\EE}{\mathcal{E}}
\newcommand{\E}{\mathbb{E}}
\newcommand{\Oo}{\mathcal{O}}
\newcommand{\Ll}{\mathcal{L}}
\newcommand{\Ss}{\mathcal{S}}
\newcommand{\dd}{\mathrm{d}}
\newcommand{\supp}{\mathrm{supp}}
\newcommand{\sdiff}{\mathrm{SDiff}}
\newtheorem{theorem}{Theorem}[section]
\newtheorem*{theorem*}{Theorem}
\newtheorem{lemma}[theorem]{Lemma}
\newtheorem{corollary}[theorem]{Corollary}
\newtheorem{proposition}[theorem]{Proposition}
\newtheorem{theoremalph}{Theorem}[section]
\theoremstyle{definition}
\newtheorem{remark}[theorem]{Remark}
\newtheorem{assumption}[theorem]{Assumption}
\author{Michał Kotowski \and B\'{a}lint Vir\'{a}g}
\title{Large deviations for the interchange process on the interval and incompressible flows}
\begin{document}
\maketitle

\begin{abstract}
We use the framework of permuton processes to show that large deviations of the interchange process are controlled by the Dirichlet energy. This establishes a rigorous connection between processes of permutations and one-dimensional incompressible Euler equations. While our large deviation upper bound is valid in general, the lower bound applies to processes corresponding to incompressible flows, studied in this context by Brenier. These results imply the Archimedean limit for relaxed sorting networks and allow us to asymptotically count such networks.
\end{abstract}

\tableofcontents

\begin{section}{Introduction}

In this paper we investigate the large deviation principle for a model of random permutations called the one-dimensional \emph{interchange process}. The process can be roughly described as follows. We put $N$ particles, labelled from $1$ to $N$, on a line $\{1,\ldots, N\}$ and at each time step perform the following procedure: an edge is chosen at random and adjacent particles are swapped. By comparing the particles' initial positions with their positions after given time $t$ we obtain a random permutation from the symmetric group $\Ss_N$ on $N$ elements.

The interchange process on the interval (whose discrete time analog is known as the adjacent transposition shuffle) and on more general graphs has attracted considerable attention in probability theory, for example with regard to the analysis of mixing times. It is natural to ask whether, after proper rescaling and as $N \to \infty$, the permutations obtained in the interchange process converge in distribution to an appropriately defined limiting process.

Such limits have been recently studied (\cite{limits}, \cite{mustazee}) under the name of permutons and permuton processes. These notions have been inspired by the theory of graph limits (\cite{lovasz}), where the analogous notion of a graphon as a limit of dense graphs appears. A \emph{permuton} is a Borel probability measure on $[0,1]^2$ with uniform marginals on each coordinate. A sequence of permutations $\sigma^N \in \Ss_N$ is said to converge to a permuton $\mu$ as $N \to \infty$ if the corresponding empirical measures
\[
\frac{1}{N}\sum_{i=1}^N \delta_{\left(\frac{i}{N},\frac{\sigma^N(i)}{N}\right)}
\]
converge weakly to $\mu$. A \emph{permuton process} is a stochastic process $X = (X_{t}, 0 \leq t \leq T)$ taking values in $[0,1]$, with continuous sample paths and having uniform marginals at each time $t \in [0,T]$. A permutation-valued path, such as a sample from the interchange process, is said to converge to $X$ if the trajectory of a randomly chosen particle converges in distribution to $X$.

Depending on the time scale considered, one observes different asymptotic structure in the permutations arising from the interchange process. If the average number of all swaps is greater than $\sim N^3 \log N$, the process will be close to its stationary distribution (\cite{aldous}, \cite{lacoin}), which is the uniform distribution on $\Ss_N$. For $\sim N^3$ swaps each particle has displacement of order $N$ and the whole process converges, in the sense of permuton processes, to a Brownian motion on $[0,1]$ (\cite{mustazee2}).

Here we will be interested in yet shorter time scales, corresponding to $\sim N^{2+\varepsilon}$ swaps for fixed $\varepsilon \in (0,1)$. In this scaling each particle has displacement $\ll N$, so the resulting permutations will be close to the identity permutation. Nevertheless, in the spirit of large deviation theory one can still ask questions about rare events, for example ``what is the probability that starting from the identity permutation we are close to a fixed permuton after time $t$?'' or, more generally, ``what is the probability that the interchange process behaves like a given permuton process $X$?''. We expect such probabilities to decay exponentially in $N^\gamma$ for some $\gamma > 0$, with the decay rate given by a \emph{rate function} on the space of permuton processes.

The large deviation principle we obtain in this paper can be informally summarized as follows: for a class of permuton processes solving a natural energy minimization problem, the probability $\Pp(A)$ that the interchange process is close in distribution to a process $X$ satisfies asymptotically
\begin{equation}\label{eq:informal-main-result}
\frac{1}{N^{\gamma}} \log \Pp(A) \approx - I(X),
\end{equation}
where $\gamma = 2 - \varepsilon$ and $I(X)$ is the \emph{energy} of $X$, defined as the expected Dirichlet energy of a path sampled from $X$. Apart from a purely probabilistic interest, the result is relevant to two other seemingly unrelated subjects, namely the study of Euler equations in fluid dynamics and the study of sorting networks in combinatorics.

Let us first state the energy minimization problem in question, which is as follows -- given a permuton $\mu$, find
\begin{equation}\label{eq:energy-min}
\inf\limits_{(X_0, X_T) \sim \mu} I(X),
\end{equation}
where the infimum is over all permuton processes $X$ such that $(X_0, X_T)$ has distribution $\mu$. As it happens, such energy-minimizing processes have been considered in fluid dynamics in the study of \emph{incompressible Euler equations}, under the name of \emph{generalized incompressible flows}. This connection is discussed in more detail in Section \ref{sec:incompressible-flows}. Very roughly speaking, Euler equations in a domain $D \subseteq \R^d$ describe motion of fluid particles whose trajectories satisfy the equation
\begin{equation}\label{eq:euler-informal}
x''(t) = - \nabla p (t, x)
\end{equation}
for some function $p$ called the \emph{pressure}. The incompressibility constraint means that the flow defined by the equation has to be volume-preserving. Classical, smooth solutions to Euler equations correspond to flows which are diffeomorphisms of $D$. Generalized incompressible flows are a stochastic variant of such solutions in which each particle can choose its initial velocity independently from a given probability distribution.

It turns out that, under additional regularity assumptions, such generalized solutions to Euler equations \eqref{eq:euler-informal} for $D = [0,1]$ correspond exactly to permuton processes solving the energy minimization problem \eqref{eq:energy-min} for some permuton $\mu$. Our large deviation result \eqref{eq:informal-main-result} is valid precisely for such energy-minimizing permuton processes (again, under certain regularity assumptions).

As it happens, the original motivation for our work came from a different direction, namely from the study of \emph{sorting networks} in combinatorics. This connection is explained in more detail below. Using our large deviation principle \eqref{eq:informal-main-result}, we are able to prove novel results on a variant of the model we call \emph{relaxed sorting networks}. Thus the large deviation principle presented in this paper provides a rather unexpected link between problems in combinatorics (sorting networks) and fluid dynamics (incompressible Euler equations), along with a quite general framework for analyzing permuton processes which we hope will find further applications.

\paragraph{Main results.}

Let us now state our main results more formally, still with complete definitions and discussion of assumptions deferred until Sections \ref{sec:stationarity} and \ref{sec:ode-part}. Let $\DD = \DD([0,T], [0,1])$ be the space of c\`adl\`ag paths from $[0,T]$ to $[0,1]$ and let $\MM(\DD)$ be the space of Borel probability measures on $\DD$. Let $\PP \subseteq \MM(\DD)$ denote the space of permuton processes and their approximations by permutation-valued processes. For $\pi \in \MM(\DD)$ by $I(\pi)$ we will denote the expected Dirichlet energy of the process $X$ whose distribution is $\pi$.

Let $\eta^N$ denote the interchange process in continuous time on the interval $\{1, \ldots, N\}$, speeded up by $N^{\alpha}$ for some $\alpha \in (1,2)$. Let $\gamma = 3 - \alpha$. We have the following large deviation principle

\begin{theoremalph}[Large deviation lower bound]\label{th:theorem-main-lower}
Let $\Pp^{N}$ be the law of the interchange process $\eta^N$ and let $\mu^{\eta^{N}} \in \MM(\DD)$ be the empirical distribution of its trajectories. Let $\pi$ be a permuton process which is a generalized solution to Euler equations \eqref{eq:gen-euler}. Provided $\pi$ satisfies Assumptions \eqref{as:main-assumptions}, for any open set $\Oo \subseteq \PP$ such that $\pi \in \Oo$ we have
\[
\liminf_{N \to \infty} N^{-\gamma} \log \Pp^{N}\left(\mu^{\eta^{N}} \in \Oo\right) \geq - I(\pi).
\]
\end{theoremalph}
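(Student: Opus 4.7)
The plan is to prove the lower bound by the classical tilting / change of measure argument. I will construct a modified interchange process $\tilde{\eta}^N$, living on the same path space, with law $\tilde{\Pp}^N$ satisfying (i) the empirical trajectory distribution $\mu^{\tilde{\eta}^N}$ concentrates on $\pi$ as $N\to\infty$, and (ii) the relative entropy $\mathcal{H}(\tilde{\Pp}^N \,\Vert\, \Pp^N)$ is asymptotic to $N^\gamma I(\pi)$. Granted (i) and (ii), the standard entropy inequality
\[
-\log \Pp^N(\mu^{\eta^N} \in \Oo) \;\leq\; \frac{\mathcal{H}(\tilde{\Pp}^N\,\Vert\,\Pp^N) + e^{-1}}{\tilde{\Pp}^N(\mu^{\tilde{\eta}^N} \in \Oo)},
\]
divided by $N^\gamma$ and passed to the limit, immediately yields $\liminf_{N\to\infty} N^{-\gamma}\log\Pp^N(\mu^{\eta^N}\in\Oo) \geq -I(\pi)$.

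To build $\tilde{\eta}^N$ I would use the velocity field $v(t,x)$ attached to $\pi$, which is well defined and sufficiently regular under Assumptions \eqref{as:main-assumptions}. At each edge $(i, i+1)$ the total swap rate stays at $N^\alpha$, but I break the symmetry by introducing a bias of order $N^{1-\alpha}\,v(t, i/N)$ in favour of the swap direction indicated by $v$. A particle sitting at lattice position $k$ then experiences on average $N^\alpha \cdot N^{1-\alpha}\,v(t,k/N) = N\, v(t,k/N)$ net rightward unit steps per unit time, i.e.\ a macroscopic drift equal to $v(t,k/N)$ in $[0,1]$-coordinates, exactly matching the drift prescribed by $\pi$.

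A scaling check confirms the entropy cost. The Radon--Nikodym derivative $\dd \tilde{\Pp}^N/\dd \Pp^N$ is the standard exponential martingale for marked jump processes; expanding its logarithm to leading order in the bias $b \sim N^{1-\alpha} v$ gives a relative entropy of order $\tfrac{1}{2} b^2 \cdot N^\alpha \sim N^{2-\alpha}\, v^2$ per edge per unit time. Summing over the $N-1$ edges and over $t \in [0,T]$ gives
\[
\mathcal{H}(\tilde{\Pp}^N \,\Vert\, \Pp^N) \;\approx\; N^{3-\alpha}\cdot\tfrac{1}{2}\int_0^T\!\!\int_0^1 v(t,x)^2\,\dd x\,\dd t \;=\; N^\gamma I(\pi),
\]
where the last equality uses the Dirichlet-energy formula $I(\pi) = \tfrac{1}{2}\int_0^T\!\int_0^1 v^2\,\dd x\,\dd t$ valid for the regular generalized Euler flows considered here.

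The main obstacle is establishing (i): the law of large numbers for $\mu^{\tilde{\eta}^N}$ under $\tilde{\Pp}^N$. Despite the spatially and temporally inhomogeneous rates and the strong coupling between particles produced by the swap dynamics, one must show that $\mu^{\tilde{\eta}^N}$ concentrates on $\pi$ in the permuton process topology. This is a hydrodynamic-limit statement for the tilted dynamics, and I expect it to require (a) tightness of single-particle trajectories in $\DD$, (b) an asymptotic independence / propagation of chaos estimate between pairs of tagged particles, and (c) identification of the limiting one-particle law with the push-forward of the uniform law under the flow of the ODE $\dot x = v(t,x)$. The regularity encoded in Assumptions \eqref{as:main-assumptions} is what makes (c) accessible through classical ODE/transport-PDE techniques; together with (a)--(b) and the entropy estimate above, this will complete the proof.
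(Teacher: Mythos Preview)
Your overall architecture (tilt, show LLN under the tilted law, compute the entropy cost) is the same as the paper's, and your scaling computation is correct. But the construction of the tilted process has a genuine gap that would make the argument fail for the class of $\pi$ under consideration.

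You write ``use the velocity field $v(t,x)$ attached to $\pi$'' and bias each edge according to $v(t,i/N)$. The whole point of \emph{generalized} incompressible flows is that there is no such velocity field: at a given space--time point $(t,x)$ the process $\pi$ supports a nontrivial \emph{distribution} $\mu_{t,x}$ of velocities (Assumption~\ref{as:main-assumptions} explicitly posits a density $\rho_t(x,v)$). In the canonical example, the sine curve process, two particles at the same position move in opposite directions. A bias that depends only on the site cannot reproduce this; your tilted dynamics would converge, at best, to the deterministic flow $\dot x = \bar v(t,x)$ with $\bar v$ the mean of $\mu_{t,x}$, and by Lemma~\ref{lm:velocity-mean-zero} that mean is identically zero. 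So your $\tilde\eta^N$ would concentrate on the identity permuton process, not on $\pi$.

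The paper's remedy is to enlarge the state space: each particle carries an extra coordinate $\phi\in\{1,\dots,N\}$ (a ``color'') whose rescaled value plays the role of the quantile of $\mu_{t,x}$, and the swap bias at an edge depends on the \emph{colors} of the two particles via $v(t,x,\phi_x)-v(t,x+1,\phi_{x+1})$. The colors themselves evolve so that their uniform distribution is stationary (this is where the change of variables from $v$ to $\phi$ and the careful choice of rates \eqref{eq:rates-s-r} enter). Only with this extra structure can one (a) get the LLN toward the correct non-deterministic $\pi$, and (b) control the cross term $\sum_x v_x v_{x+1}$ in the Radon--Nikodym expansion, which does \emph{not} vanish for free: it requires Proposition~\ref{prop:speeds} (mean-zero, independent velocities under the stationary color law) together with the one-block estimate. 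Your sketch ``expanding to leading order in the bias'' hides exactly this term.

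In short: your plan is right at the level of strategy, but you are missing the key structural idea---an auxiliary per-particle variable encoding the velocity quantile---without which neither the LLN nor the entropy computation goes through for genuinely stochastic $\pi$.
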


\begin{theoremalph}[Large deviation upper bound]\label{th:theorem-main-upper}
Let $\Pp^{N}$ be the law of the interchange process $\eta^{N}$ and let $\mu^{\eta^{N}} \in \MM(\DD)$ be the empirical distribution of its trajectories. For any closed set $\mathcal{C} \subseteq \PP$ we have
\[
\limsup_{N \to \infty} N^{-\gamma} \log \Pp^{N}\left(\mu^{\eta^{N}} \in \mathcal{C} \right) \leq - \inf\limits_{\pi \in \mathcal{C}} I(\pi).
\]
\end{theoremalph}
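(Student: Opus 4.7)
The plan is to follow the standard two-step strategy for large deviation upper bounds for empirical measures of interacting particle systems. First I establish \emph{exponential tightness} of $\{\mu^{\eta^N}\}$ at scale $N^\gamma$, so that one may intersect $\mathcal{C}$ with a suitable compact $K\subset\MM(\DD)$ up to an exponentially negligible error. Then I prove a \emph{local upper bound}
\[
\limsup_{N\to\infty}N^{-\gamma}\log\Pp^N(\mu^{\eta^N}\in B_\varepsilon(\pi))\leq -I(\pi)+o_\varepsilon(1)
\]
for every $\pi\in\PP$ and a neighborhood basis $\{B_\varepsilon(\pi)\}_\varepsilon$; the theorem then follows by covering the compact set $\mathcal{C}\cap K$ by finitely many such balls.

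For the exponential tightness the key observation is that, although the particles are strongly coupled, the marginal law of a single particle under $\Pp^N$ is exactly that of a continuous-time simple random walk on $\{1,\ldots,N\}$ with edge rate $N^\alpha$. A Bernstein-type tail bound on the number of jumps of one particle in a window of length $\delta$, combined with a union bound over the $N$ particles and over a dyadic collection of windows, yields a uniform H\"older-$(1/2-\varepsilon')$ modulus of continuity for all trajectories outside an event of probability at most $\exp(-cN^\gamma)$. This confines $\mu^{\eta^N}$ to a compact subset of $\MM(\DD)$ via Arzel\`a--Ascoli applied to the common modulus of the support.

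For the local upper bound I would use exponential tilts. Given a smooth test drift $b:[0,T]\times[0,1]\to\R$, I modify each particle's right/left jump rates to $N^\alpha e^{\pm b(s,\cdot)/(2N^{\alpha-1})}$; the associated Radon--Nikodym derivative is a mean-one exponential $\Pp^N$-martingale whose logarithm, expanded to second order in the small parameter $b/N^{\alpha-1}$, takes the form
\[
\log\frac{d\widetilde{\Pp}^N_b}{d\Pp^N}\;=\;N^\gamma\Bigl(\,\bigl\langle L_b,\mu^{\eta^N}\bigr\rangle-Q(b)+o(1)\,\Bigr),
\]
where $L_b(\omega)$ is an affine functional of $b$ built from the pathwise integral $\int_0^T b(s,\omega_s)\,d\omega_s$, and $Q(b)$ is a deterministic quadratic in $b$ (deterministic because the time-$s$ marginal of $\mu^{\eta^N}$ is the uniform measure on $\{1/N,\ldots,1\}$ identically, by the interchange dynamics). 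Markov's inequality applied to this mean-one martingale, followed by optimization over $b$, yields the local upper bound; the Legendre dual of the linear-quadratic functional is, via the standard Dirichlet-energy variational formula, the expected Dirichlet energy $I(\pi)$.

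The principal technical obstacle is that the pointwise (in $\omega$) Legendre transform would require $b$ to depend on the whole trajectory, which is not a legitimate Markovian tilt. The standard cure, in the spirit of Kipnis--Olla--Varadhan, is a superexponential replacement: one shows that the local empirical distribution of (position, instantaneous velocity) around each point $(s,x)$ concentrates, on the scale $N^\gamma$, on the conditional velocity distribution of the limit permuton process given position, so that a countable dense family of smooth $(s,x)$-tilts suffices to recover the full supremum after a measurable-selection argument. Executing this replacement uniformly over a fine discretization of $(s,x)$, and giving a pathwise meaning to $\int b(s,\omega_s)\,d\omega_s$ for limit processes whose trajectories have only finite Dirichlet energy, will be the most delicate steps of the proof.
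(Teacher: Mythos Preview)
There are two genuine gaps. For exponential tightness, a single particle is indeed a rate-$N^\alpha$ random walk, and its probability of rescaled displacement $\geq\varepsilon$ in a window of length $\delta$ is of order $\exp(-c\,\varepsilon^2 N^{2-\alpha}/\delta)$. A union bound over the $N$ particles (and over windows) therefore controls the event that \emph{every} trajectory has good modulus only at scale $N^{2-\alpha}$, strictly smaller than $N^\gamma=N^{3-\alpha}$; to reach $N^\gamma$ you would need $\delta$ of order $1/N$, but the compact sets $K_m$ must be $N$-independent. Since the trajectories are strongly dependent there is no evident Chernoff-type upgrade to a bound on the \emph{fraction} of bad particles either. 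The paper obtains the correct scale by a different mechanism: it shows that a positive fraction of particles with bad modulus forces the finite-partition energy $I^{\Pi}(\mu^{\eta^N})$ to be large, and then reuses the exponential martingale of the local step to get $\Pp^N(I^{\Pi}(\mu^{\eta^N})\geq C)\leq e^{-CN^\gamma+o(N^\gamma)}$; the extra factor of $N$ enters because the energy is a sum over \emph{all} particles.

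For the local upper bound, a drift $b(s,x)$ depending only on time and current position is a \emph{trivial} tilt of the interchange process. In a swap at edge $(x,x+1)$ one particle moves right and the other left, so ``each particle's right/left jump rate'' is not separately well-defined; any configuration functional of the form $\sum_i\Psi(s,x_i(\eta))=\sum_{x=1}^N\Psi(s,x)$ is invariant under swaps, and the corresponding mean-one martingale is identically $1$. Equivalently, $\sum_i\int b(s,X_i)\,dX_i$ depends only on the edge-current process and carries no information about \emph{which} particle went where. One must use \emph{particle-indexed} tilts. The paper takes $F_S(\eta)=\varepsilon\sum_i s_i\,x_i(\eta)$ with a free velocity $s_i$ per particle; the quadratic term in the exponential martingale is then $\tfrac14 N^\gamma\int\tfrac1N\sum_x(s_x(\eta_s)-s_{x+1}(\eta_s))^2\,ds$, whose expansion gives the deterministic diagonal $\tfrac1N\sum_i s_i^2$ plus the random cross term $\tfrac1N\sum_x s_x(\eta_s)s_{x+1}(\eta_s)$. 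The superexponential one-block estimate is used precisely here, to show this cross term is, up to $o(1)$ at scale $N^\gamma$, a sum of \emph{squared} local block averages and hence nonnegative---so it may simply be discarded for the upper bound. Optimizing $s_i=(\sigma(i)-i)/(tN)$ yields $\tfrac1t I(\sigma)$, and the passage to $I(\pi)$ goes through finite-dimensional marginals and independence of increments, not a path-space Legendre transform. Your proposed replacement---concentrating a local empirical \emph{velocity} distribution on that of the target $\pi$---does not match this structure: the unbiased process carries no velocity variable on which to concentrate.
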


The results are referred to as respectively Theorem \ref{thm:lower-bound-for-minimizers} and Theorem \ref{th:upper-bound} in the following sections. Here the large deviation upper bound is valid for \emph{all} permuton processes, without any additional assumptions. On the other hand, in the proof of the lower bound we exploit rather heavily the special structure possessed by generalized solutions to Euler equations. We expect the lower bound to hold for arbitrary permuton processes as well, since one can locally approximate any permuton process by energy minimizers. However, for our techniques to apply one would need to understand in more detail regularity of the associated velocity distributions and pressure functions, which falls outside the scope of our work.

The reader may notice that the rate function, which is the energy $I(\pi)$, is similar to the one appearing in the analysis of large deviations for independent random walks. In fact, the crux of our proofs lies in proving that particles in the interchange process and its perturbations are in a certain sense almost independent. The main techniques used here come from the field of interacting particle systems. A comprehensive introduction to the subject can be found in \cite{kipnis}. The novelty in our approach is in applying tools usually used to study hydrodynamic limits to a setting which is in some respects more involved, since the limiting objects we consider, permuton processes, are stochastic processes instead of deterministic objects like solutions of PDEs apearing, for example, for exclusion processes.

\paragraph{Sorting networks and the sine curve process.} The large deviation bounds can be applied to obtain results on a model related to sorting networks. A \emph{sorting network} on $N$ elements is a sequence of $M = \binom{N}{2}$ transpositions $(\tau_{1}$, $\tau_{2}$, $\ldots$, $\tau_{M})$ such that each $\tau_{i}$ is a transposition of adjacent elements and $\tau_{M} \circ \ldots \circ \tau_{1} = \mathrm{rev}_{N}$, where $\mathrm{rev}_{N} = (N \,\ldots \,2 \, 1)$ denotes the \emph{reverse permutation}. It is easy to see that any sequence of adjacent transpositions giving the reverse permutation must have length at least $\binom{N}{2}$, hence sorting networks can be thought of as shortest paths joining the identity permutation and the reverse permutation in the Cayley graph of $\Ss_{N}$ generated by adjacent transpositions.

A \emph{random sorting network} is obtained by sampling a sorting network uniformly at random among all sorting networks on $N$ elements. Let us work in continuous time, assuming each transposition $\tau_i$ happens at time $\frac{i}{M+1}$. It was conjectured in \cite{sorting} and recently proved in \cite{duncan} that the trajectory of a randomly chosen particle in a random sorting network has a remarkable limiting behavior as $N \to \infty$, namely it converges in the sense of permuton processes to a deterministic limit, which is the sine curve process described below.

Here it will be more natural to consider the square $[-1,1]^2$ and processes with values in $[-1,1]$ instead of $[0,1]$ (with the obvious changes in the notion of a permuton and a permuton process which we leave implicit). The \emph{Archimedean law} is the measure on $[-1,1]^2$ obtained by projecting the normalized surface area of a $2$-dimensional half-sphere to the plane or, equivalently, the measure supported inside the unit disk $\{x^2 + y^2 \leq 1\}$ whose density is given by $1 /(2\pi \sqrt{1 - x^2 - y^2}) \, dx \, dy$. Observe that thanks to the well-known plank property each strip $[a,b] \times [-1,1]$ has measure proportional to $b-a$, hence the Archimedean law defines a permuton.

The \emph{sine curve process} is the permuton process $\mathcal{A} = (\mathcal{A}_{t}, 0 \leq t \leq 1)$ with the following distribution -- we sample $(X,Y)$ from the Archimedean law and then follow the path
\[
\mathcal{A}_t = X \cos \pi t + Y \sin \pi t.
\]
One can directly check that $\mathcal{A}_t$ has uniform distribution on $[-1,1]$ at each time $t$, hence $\mathcal{A}_t$ indeed defines a permuton process. Observe that $(\mathcal{A}_{0}, \mathcal{A}_{0}) = (X,X)$ and $(\mathcal{A}_{0}, \mathcal{A}_{1}) = (X, -X)$, thus the sine curve process defines a path between the identity permuton and the \emph{reverse permuton}.

An equivalent way of describing the sine curve process consists of choosing a pair $(R, \theta)$ at random, where the angle $\theta$ is uniform on $[0,2\pi]$ and $R$ has density $r / 2 \pi \sqrt{1-r^2} \, dr$ on $[0,1]$, and following the path $\mathcal{A}_t = R \cos(\pi t + \theta)$. Thus the trajectories of this process are sine curves with random initial phase and amplitude -- the path of a random particle is determined by its initial position $X$ and velocity $V$, given by $(X,V) = (R \cos \theta, -\pi R \sin \theta)$.

Recall now the energy minimization problem \eqref{eq:minimize-permuton}. The sine curve process is the unique minimizer of energy among all permuton processes joining the identity to the reverse permuton (\cite{brenier}, see also \cite{mustazee}), with the minimal energy equal to $I(\mathcal{A}) = \frac{\pi^2}{6}$. It is one of the few examples where the solution to the problem \eqref{eq:minimize-permuton} can be explicitly calculated for a target permuton $\mu$. It also seems to play a special role in constructing generalized incompressible flows which are non-unique solutions to the energy minimization problem in dimensions greater than one, see, e.g., \cite{bernot-figalli-santambrogio}.

The sine curve process is a generalized solution to Euler equations with the pressure function $p(x) = \frac{x^2}{2}$, which unsurprisingly leads to each particle satisyfing the harmonic oscillator equation $x'' = -x$. The reader may check that the sine curve process satisfies the Assumptions \eqref{as:main-assumptions} (with the velocity distribution being time-independent), thus providing a non-trivial and explicit example for which our large deviation bounds hold. To the best of our knowledge the connection between sorting networks on the one hand and Euler equations on the other hand was first observed in the literature in \cite{duncan}.

Let us now describe the results on relaxed sorting networks. Fix $\delta > 0$ and $N \geq 1$. We define a $\delta$-\emph{relaxed sorting network} of length $M$ on $N$ elements to be a sequence of $M$ adjacent transpositions $(\tau_1, \ldots, \tau_M)$ such that the permutation $\sigma_M = \tau_M \circ \ldots \circ \tau_1$ is $\delta$-close to the reverse permutation $\mathrm{rev} = (N \, \ldots \, 2 \, 1)$ in the Wasserstein distance on the space $\MM([0,1]^2)$ of Borel probability measures on $[0,1]^2$ (see Section \ref{sec:stationarity} for the definition). For fixed $\kappa \in (0,1)$ we define a \emph{random} $\delta$-\emph{relaxed sorting network} on $N$ elements by choosing $M$ from a Poisson distribution with mean $\lfloor \frac{1}{2} N^{1 + \kappa}(N-1) \rfloor$ and then sampling a $\delta$-relaxed sorting network of length $M$ uniformly at random.

Our first result is that the analog of the sorting network conjecture holds for relaxed sorting networks, that is, in a random relaxed sorting network the trajectory of a random particle is with high probability close in distribution to the sine curve process. Precisely, we have the following

\begin{theorem}\label{th:lln-for-relaxed-networks}
Fix $\kappa \in (0,1)$ and let $\pi^{N}_{\delta}$ denote the empirical distribution of the permutation process (as defined in \eqref{eq:empirical-eta}) associated to a random $\delta$-relaxed sorting network on $N$ elements. Let $\pi_{\mathcal{A}}$ denote the distribution of the sine curve process. Given any $\varepsilon > 0$ we have for all sufficiently small $\delta > 0$
\[
\lim\limits_{N \to \infty} \Pp^N \left( \pi^{N}_{\delta} \in B(\pi_{\mathcal{A}}, \varepsilon) \right) = 1,
\]
where $B(\pi_{\mathcal{A}}, \varepsilon)$ is the $\varepsilon$-ball in the Wasserstein distance on $\PP$.
\end{theorem}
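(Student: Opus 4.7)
The plan is to recognize a random $\delta$-relaxed sorting network as the continuous-time interchange process $\eta^N$, appropriately sped up, conditioned on its time-$1$ endpoint being $\delta$-close to $\mathrm{rev}$, and then combine the two large deviation bounds (Theorems \ref{th:theorem-main-lower} and \ref{th:theorem-main-upper}) with the fact that the sine curve $\pi_{\mathcal{A}}$ is the unique minimizer of $I$ among permuton processes joining the identity to the reverse permuton.

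Set $\alpha = 1+\kappa \in (1,2)$ and $\gamma = 3-\alpha = 2-\kappa$. With rate $1/2$ per edge and speedup $N^\alpha$ on the time interval $[0,1]$, the total number of transpositions performed by $\eta^N$ is $\mathrm{Poisson}\bigl(\tfrac{1}{2}N^{1+\kappa}(N-1)\bigr)$, and conditionally on its value $M$ the sequence of transpositions is uniform among length-$M$ words in adjacent transpositions. Letting $E_\delta \subseteq \PP$ denote the closed set of $\pi$ whose time-$1$ endpoint lies within Wasserstein distance $\delta$ of the reverse permuton, the law of $\pi^N_\delta$ therefore coincides with the conditional law of $\mu^{\eta^N}$ given $\{\mu^{\eta^N} \in E_\delta\}$, and it suffices to show
\[
\frac{\Pp^N\bigl(\mu^{\eta^N} \in E_\delta \setminus B(\pi_{\mathcal{A}},\varepsilon)\bigr)}{\Pp^N\bigl(\mu^{\eta^N} \in E_\delta\bigr)} \longrightarrow 0.
\]
For the denominator, I would pick a small open neighbourhood $\Oo$ of $\pi_{\mathcal{A}}$ contained in $E_\delta$ (possible because the endpoint map is Wasserstein-continuous and $\pi_{\mathcal{A}}$ has endpoint exactly $\mathrm{rev}$). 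Since $\pi_{\mathcal{A}}$ is a generalized Euler solution with pressure $p(x)=x^2/2$ satisfying Assumptions \eqref{as:main-assumptions}, Theorem \ref{th:theorem-main-lower} gives $\liminf_{N\to\infty} N^{-\gamma}\log\Pp^N(\mu^{\eta^N} \in E_\delta) \geq -\pi^2/6$. For the numerator, the set $E_\delta \setminus B(\pi_{\mathcal{A}},\varepsilon)$ is closed, so Theorem \ref{th:theorem-main-upper} yields $\limsup_{N\to\infty} N^{-\gamma}\log\Pp^N(\mu^{\eta^N} \in E_\delta \setminus B(\pi_{\mathcal{A}},\varepsilon)) \leq -\inf_{\pi \in E_\delta \setminus B(\pi_{\mathcal{A}},\varepsilon)} I(\pi)$. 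The theorem thus reduces to the strict stability inequality
\[
\inf_{\pi \in E_\delta \setminus B(\pi_{\mathcal{A}},\varepsilon)} I(\pi) > \frac{\pi^2}{6} \qquad \text{for all sufficiently small } \delta > 0.
\]

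The main obstacle is proving this inequality, which expresses stability of Brenier's variational problem under relaxation of the endpoint constraint. Arguing by contradiction, suppose there existed $\delta_n \downarrow 0$ and $\pi_n \in E_{\delta_n}$ with $\pi_n \notin B(\pi_{\mathcal{A}},\varepsilon)$ and $I(\pi_n) \to \pi^2/6$. Using that sublevel sets $\{I \leq C\}$ are compact in the Wasserstein topology on $\PP$ -- finite expected Dirichlet energy gives quantitative $1/2$-H\"older control on paths and hence tightness via Arzel\`a--Ascoli, while $I$ is lower semicontinuous as an integral of squared velocities -- one would extract a convergent subsequence $\pi_n \to \pi^*$. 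Continuity of the endpoint map then forces $\pi^*$ to have endpoints $(\mathrm{id},\mathrm{rev})$, lower semicontinuity forces $I(\pi^*) \leq \pi^2/6$, and Brenier's uniqueness result (recalled above) forces $\pi^* = \pi_{\mathcal{A}}$, contradicting $\pi_n \notin B(\pi_{\mathcal{A}},\varepsilon)$. The technical heart of the proof is thus the verification of lower semicontinuity and relative compactness of sublevel sets of $I$ in the specified topology on $\PP$, together with careful handling of the $O(1/N)$ discrepancy between the discrete and continuum reverse permutons inside the conditioning event.
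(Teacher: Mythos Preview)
Your proposal is correct and follows essentially the same route as the paper: express the probability as a conditional, bound the denominator via Theorem~\ref{th:theorem-main-lower} applied at $\pi_{\mathcal{A}}$ and the numerator via Theorem~\ref{th:theorem-main-upper} on the closed set $E_\delta \setminus B(\pi_{\mathcal{A}},\varepsilon)$, and reduce to the strict stability inequality, which the paper likewise deduces from uniqueness of $\pi_{\mathcal{A}}$ combined with lower semicontinuity and compactness of sublevel sets of $I$. The only cosmetic difference is that the paper first establishes the gap on the exact-endpoint set $\mathcal{R}$ and then passes to its $\delta$-neighborhood $\mathcal{R}_\delta$ by lower semicontinuity, whereas you run a single compactness-and-contradiction argument directly on $E_\delta$.
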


Here for consistency of notation we assume that the sine curve process is rescaled so that it is supported on $[0,1]$ rather than $[-1,1]$.

The second result is more combinatorial and concerns the problem of enumerating sorting networks. A remarkable formula due to Stanley (\cite{stanley}) says that the number of all sorting networks on $N$ elements is equal to
\[
\frac{\binom{N}{2}!}{1^{N-1} 3^{N-2} \ldots (2N-3)^{1}},
\]
which is asymptotic to $\exp\left\{\frac{N^2}{2}\log N + (\frac{1}{4} - \log 2)N^2 + O(N \log N) \right\}$.

For relaxed sorting networks we have the following asymptotic estimate

\begin{theorem}\label{th:asymptotics-relaxed}
For any $\kappa \in (0,1)$ let $\mathcal{S}^{N}_{\kappa, \delta}$ be the number of $\delta$-relaxed sorting networks on $N$ elements of length $M = \lfloor\frac{1}{2} N^{1+\kappa}(N-1)\rfloor$. We have
%Given any $\varepsilon > 0$ we have for all sufficiently small $\delta > 0$
\[
\mathcal{S}^{N}_{\kappa,\delta} = \exp \left\{ \frac{1}{2} N^{1 + \kappa} (N-1)\log (N-1) - \left(\frac{\pi^2}{6} + \varepsilon^{N}_{\delta}\right)N^{2 - \kappa}\right\},
\]
where $\varepsilon^{N}_{\delta}$ satisfies $\lim\limits_{\delta \to 0} \lim\limits_{N \to \infty} \varepsilon^{N}_{\delta} = 0$.
\end{theorem}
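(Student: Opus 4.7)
The strategy is to identify $\mathcal{S}^{N}_{\kappa,\delta}$ as a conditional probability for the speeded-up interchange process and to evaluate this probability via the LDP applied at the sine curve process.

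Take $\alpha=1+\kappa$ (so $\gamma=2-\kappa$), and let $\eta^{N}$ be the continuous-time interchange process speeded up by $N^{\alpha}$, run on a time interval chosen so that the total Poisson number $K^{N}$ of transpositions has mean $\lambda_{N}=M$. Conditional on $\{K^{N}=M\}$ the ordered sequence of swaps is a uniform word in $\{1,\dots,N-1\}^{M}$, so writing $B_{\delta}\subseteq\PP$ for the set of permuton processes whose endpoint marginal is within Wasserstein distance $\delta$ of the reverse permuton,
\[
\mathcal{S}^{N}_{\kappa,\delta} \;=\; (N-1)^{M}\,\Pp^{N}\!\left(\mu^{\eta^{N}}\in B_{\delta}\,\bigm|\,K^{N}=M\right).
\]

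Next I evaluate the \emph{unconditional} probability $\Pp^{N}(\mu^{\eta^{N}}\in B_{\delta})$ via Theorems \ref{th:theorem-main-lower} and \ref{th:theorem-main-upper}. The sine curve process $\pi_{\mathcal{A}}$ lies in $B_{\delta}$ for every $\delta>0$, is a generalized solution to the Euler equations satisfying Assumptions \eqref{as:main-assumptions} (as noted in the introduction), and is the \emph{unique} energy minimizer among permuton processes joining the identity to the reverse, with $I(\pi_{\mathcal{A}})=\pi^{2}/6$. Combined with compactness of the sublevel sets of $I$, this uniqueness yields $\inf_{\pi\in\overline{B_{\delta}}}I(\pi)\to \pi^{2}/6$ as $\delta\to 0$ by a standard contradiction argument, and therefore Theorems \ref{th:theorem-main-lower} and \ref{th:theorem-main-upper} give
\[
N^{-(2-\kappa)}\log \Pp^{N}\bigl(\mu^{\eta^{N}}\in B_{\delta}\bigr) \;=\; -\tfrac{\pi^{2}}{6} + \varepsilon_{\delta}(N), \qquad \lim_{\delta\to 0}\limsup_{N\to\infty}|\varepsilon_{\delta}(N)|=0.
\]

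It remains to transfer this estimate to the conditioning on $\{K^{N}=M\}$. Since $M=\lambda_{N}$ is the Poisson mode, Stirling gives $\log \Pp(K^{N}=M) = O(\log N) = o(N^{2-\kappa})$, so the conditioning cannot improve the rate. To see it does not worsen it, expand
\[
\Pp^{N}(\mu^{\eta^{N}}\in B_{\delta}) \;=\; \sum_{M'\geq 0}\Pp(K^{N}=M')\,\frac{\mathcal{S}^{N,M'}_{\kappa,\delta}}{(N-1)^{M'}}
\]
(with $\mathcal{S}^{N,M'}_{\kappa,\delta}$ the length-$M'$ count) and argue the sum is dominated at scale $N^{2-\kappa}$ by the terms with $|M'-M|\leq N^{(1+\kappa)/2}\log N$. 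Poisson tails absorb the rest, while in the central window the Poisson weight is within $e^{o(N^{2-\kappa})}$ of its peak, and a pad/trim comparison, inserting or removing pairs $(\tau,\tau)$ of cancelling adjacent transpositions at the cost $\delta\mapsto 2\delta$, bounds the ratio $\mathcal{S}^{N,M'}_{\kappa,2\delta}/\mathcal{S}^{N,M}_{\kappa,\delta}$ by $N^{O(|M'-M|)}$; combined with the $(N-1)^{M-M'}$ factor this gives variation $e^{O(|M'-M|\log N)}=e^{o(N^{2-\kappa})}$, using $(1+\kappa)/2<2-\kappa$ for $\kappa\in(0,1)$. Putting this together with the unconditional LDP yields $\log \mathcal{S}^{N}_{\kappa,\delta} = M\log(N-1) - (\pi^{2}/6+\varepsilon^{N}_{\delta})N^{2-\kappa}$, which is the claimed asymptotics.

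The main obstacle is the pad/trim comparison, in particular the trim direction: showing that a generic $\delta$-relaxed sorting network contains enough removable pairs of cancelling adjacent transpositions to invert the padding map with well-controlled multiplicative loss and $\delta$-enlargement. The remaining steps --- applying the LDP bounds, handling the Poisson mode via Stirling, and exploiting uniqueness/compactness for the rate $\pi^{2}/6$ --- are standard once the framework of Theorems \ref{th:theorem-main-lower} and \ref{th:theorem-main-upper} is in hand.
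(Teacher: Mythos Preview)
Your overall strategy---identify $\mathcal{S}^{N}_{\kappa,\delta}$ with $(N-1)^{M}$ times a conditional probability for the interchange process, evaluate the unconditional probability via the two LDP bounds at the sine curve process, then transfer back through the Poisson conditioning---is correct and is exactly what the paper does. The upper bound on $\mathcal{S}^{N}_{\kappa,\delta}$ follows immediately from $\Pp^{N}(B_{\delta})\geq \Pp(K^{N}=M)\cdot \mathcal{S}^{N}_{\kappa,\delta}/(N-1)^{M}$ together with the Stirling estimate $\log\Pp(K^{N}=M)=O(\log N)$ and Theorem~\ref{th:theorem-main-upper}, and the convergence $\inf_{\overline{B_{\delta}}}I\to\pi^{2}/6$ via lower semicontinuity and uniqueness of the sine minimizer is handled as you indicate.

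The divergence is in how you pass from the unconditional probability to the conditional one for the \emph{lower} bound on $\mathcal{S}^{N}_{\kappa,\delta}$. Your pad/trim scheme with cancelling pairs $(\tau,\tau)$ is both unnecessary and the weak point of the argument: as you yourself flag, the trim direction requires controlling how many removable adjacent repeats a generic $\delta$-relaxed network contains, which is a genuine extra piece of work. The paper sidesteps this entirely with a one-line monotonicity argument: for any $k\leq M$, couple a uniform word of length $M$ with its length-$k$ prefix; the remaining $M-k\leq M=o(N^{3})$ random transpositions move each particle by $O(N^{(1+\kappa)/2})=o(N)$, hence the endpoint permuton by $o(1)$ in Wasserstein distance, so $\Pp^{N}(\cdot\mid J=k)\leq \Pp^{N}(\cdot\mid J=M)(1+o(1))$. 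Summing over $k\leq M$ gives
\[
\Pp^{N}\bigl(\mu^{\eta^{N}}_{0,T}\in\mathcal{S}_{\delta},\ J\leq M\bigr)\ \leq\ \Pp^{N}(J\leq M)\cdot\frac{\mathcal{S}^{N}_{\kappa,\delta}}{(N-1)^{M}}\,(1+o(1)),
\]
and the left side is then lower-bounded (either via Theorem~\ref{th:theorem-main-lower} or, as the paper does, directly via the Radon--Nikodym estimate of Lemma~\ref{lm:radon-nikodym-lower-bound} combined with $\widetilde{\Pp}^{N}(J\leq M)\to 1/2$ for the biased process). This replaces your entire pad/trim step by the single observation that appending $o(N^{3})$ random adjacent transpositions to any word moves its endpoint empirical measure by $o(1)$; no removal of pairs is ever needed.
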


The asymptotics is analogous to that of Stanley's formula -- the first term in the exponent corresponds simply to the number of all paths of required length, and, crucially, the factor $\frac{\pi^2}{6}$ corresponds to the energy of the sine curve process.

The proofs of Theorem \ref{th:lln-for-relaxed-networks} and Theorem \ref{th:asymptotics-relaxed} are given in Section \ref{sec:asymptotics}. It would be an interesting problem to obtain analogous results for relaxed sorting networks reaching \emph{exactly} the reverse permutation, not only being $\delta$-close in the permuton topology. This case is not covered by the results of this paper, since the set of permuton processes reaching exactly the reverse permuton is not open, hence the lower bound of Theorem \ref{th:theorem-main-lower} does not apply.

\paragraph{Acknowledgments.} We would like to thank Maxim Arnold, Duncan Dauvergne, Boris Khesin and Mustazee Rahman for interesting discussions. MK was supported by the National Science Centre, Poland, grant no. 2019/32/C/ST1/00525. BV was supported by the Canada Research Chair program and the NSERC Discovery Accelerator grant.
\end{section}

\begin{section}{Preliminaries}\label{sec:preliminaries}
\begin{subsection}{Permutons and stochastic processes}\label{sec:stationarity}

\paragraph{Permutons.}

Consider the space $\MM([0,1]^2)$ of all Borel probability measures on the unit square $[0,1]^2$, endowed with the weak topology. A \emph{permuton} is a probability measure $\mu \in \MM([0,1]^2)$ with uniform marginals. In other words, $\mu$ is the joint distribution of a pair of random variables $(X,Y)$, with $X$, $Y$ taking values in $[0,1]$ and having marginal distribution $X, Y \sim \mathcal{U}[0,1]$. We will sometimes call the pair $(X,Y)$ itself a permuton if there is no risk of ambiguity. A few simple examples of permutons are the \emph{identity permuton} $(X,X)$, the \emph{uniform permuton} (the distribution of two independent copies of $X$, which is the uniform measure on the square) or the \emph{reverse permuton} $(X, 1 - X)$.

Permutons can be thought of as continuous limits of permutations in the following sense. Let $\Ss_{N}$ be the symmetric group on $N$ elements and
let $\sigma \in \Ss_{N}$. We associate to $\sigma$ its \emph{empirical measure}
\begin{equation}\label{eq:permutation-empirical}
\mu_{\sigma} = \frac{1}{N} \sum\limits_{i=1}^{N} \delta_{\left(\frac{i}{N}, \, \frac{\sigma(i)}{N}\right)},
\end{equation}
which is an element of $\MM([0,1]^2)$. By a slight abuse of terminology we will sometimes identify $\sigma$ with $\mu_\sigma$. Since every such measure has uniform marginals on $\left\{\frac{1}{N}, \frac{2}{N}, \ldots, 1  \right\}$, it is not difficult to see that if a sequence of empirical measures converges weakly, the limiting measure will be a permuton. Conversely, every permuton can be realized as a limit of finite permutations, in the sense of weak convergence of empirical measures (see \cite{limits}). We will consider $\MM([0,1]^2)$ endowed with the Wasserstein distance corresponding to the Euclidean metric on $[0,1]^2$, under which the distance of measures $\mu$ and $\nu$ is given by
\[
d_{\mathcal{W}}(\mu, \nu) = \inf\limits_{\left\{(X,Y), (X',Y')\right\}} \E \left[ \sqrt{(X - X')^2 + (Y-Y')^2} \right],
\]
where the infimum is over all couplings of $(X,Y)$ and $(X',Y')$ such that $(X,Y) \sim \mu$, $(X',Y') \sim \nu$.
\paragraph{The path space $\DD$ and stochastic processes.}

A natural setting for analyzing trajectories of particles in random permutation sequences is to consider $\DD = \DD([0,T], [0,1])$, the space of all c\`adl\`ag paths from $[0,T]$ to $[0,1]$. We endow it with the standard Skorokhod topology, metrized by a metric $\rho$ under which $\DD$ is separable and complete. By $\MM(\DD)$ we will denote the space of all Borel probability measures on $\DD$, endowed with the weak topology. It will be convenient to metrize $\MM(\DD)$ by the Wasserstein distance, under which the distance between measures $\mu$ and $\nu$ is given by
\[
d_{\mathcal{W}}(\mu, \nu) = \inf\limits_{(X,Y)} \E \left[ \rho(X,Y) \right],
\]
where the infimum is over all couplings $(X,Y)$ such that $X \sim \mu$, $Y \sim \nu$. We will also make use of the Wasserstein distance associated to the supremum norm, given by
\[
d_{\mathcal{W}}^{sup}(\mu, \nu) = \inf\limits_{(X,Y)} \E \left[ \norm{X - Y}_{sup} \right],
\]
where $\norm{\cdot}_{sup}$ is the supremum norm on $\DD$ and again the infimum is over all couplings $(X,Y)$ as above.

Given two times $0 \leq s \leq t \leq T$ and a stochastic process $X = (X_{t}, 0 \leq t \leq T)$ with distribution $\mu \in \MM(\DD)$, by $\mu_{s, t} \in \MM([0,1]^2)$ we will denote the distribution of the marginal $(X_{s}, X_{t})$. Note that the projection $\mu \mapsto \mu_{s, t}$ is continuous as a map from $\MM(\DD)$ to $\MM([0,1]^2)$ as long as paths $X \sim \mu$ sampled from $\mu$ have almost surely no jumps at times $s$ and $t$. We will sometimes implicitly identify the stochastic process with its distribution when there is no risk of misunderstanding.

\paragraph{Permutation processes and permuton processes.}

Consider a permutation-valued path $\eta^{N} = (\eta^{N}_{t}, 0 \leq t \leq T)$, with $\eta^{N}_{t}$ taking values in the symmetric group $\mathcal{S}_{N}$. We will always assume that $\eta^N$ is c\`{a}dl\`{a}g as a map from $[0,T]$ to $\Ss_N$. Let $\eta^{N}(i) = \left( \eta^{N}_{t}(i), 0 \leq t \leq T \right)$ be the trajectory of $i$ under $\eta^{N}$ and let $X^{\eta^{N}}(i) = \frac{1}{N}\eta^{N}(i)$ be the rescaled trajectory. We define the empirical measure
\begin{equation}\label{eq:empirical-eta}
\mu^{\eta^{N}} = \frac{1}{N} \sum\limits_{i=1}^{N} \delta_{X^{\eta^{N}}(i)},
\end{equation}
where $\delta_{X^{\eta^{N}}(i)}$ is the delta measure concentrated on the trajectory $X^{\eta^{N}}(i)$.

The associated {\it permutation process} $X^{\eta^{N}} = (X^{\eta^{N}}_{t}, 0 \leq t \leq T)$ is obtained by choosing $i = 1, \ldots, N$ uniformly at random and following the path $X^{\eta^{N}}(i)$. In other words, $X^{\eta^{N}}$ is a random path with values in $[0,1]$ whose distribution is $\mu^{\eta^{N}} \in \MM(\DD)$. If $\eta^N$ is fixed, the only randomness here comes from the random choice of the particle $i$. Note that at each time $t$ the marginal distribution of $X_{t}^{\eta^{N}}$ is uniform on $\left\{ \frac{1}{N}, \frac{2}{N}, \ldots, 1 \right\}$.

A \emph{permuton process} is a stochastic process $X = (X_{t}, 0 \leq t \leq T)$ taking values in $[0,1]$, with continuous sample paths and such that for every $t \in [0,T]$ the marginal $X_{t}$ is uniformly distributed on $[0,1]$. The name is justified by observing that if $\pi$ is the distribution of $X$, then for any fixed $s,t \in [0,T]$ the joint distribution $\pi_{s,t} \in \MM([0,1]^2)$ of $(X_{s}, X_{t})$ defines a permuton. As explained in the next subsection, permuton processes arise naturally as limits of permutation processes defined above.

Since every permutation process has marginals uniform on $\left\{ \frac{1}{N}, \frac{2}{N}, \ldots, 1 \right\}$, we will call it an \emph{approximate permuton process}. By $\PP$ we will denote the space of all permuton processes and approximate permuton processes, treated as a subspace of $\MM(\DD)$ (with the same topology and the metric $d_{\mathcal{W}}$).

\paragraph{Random permutation and permuton processes.}

A \emph{random permuton process} is a permuton process chosen from some probability distribution on the space of all permuton processes, i.e., a random variable $X$, defined for a probability space $\Omega$, such that $X(\omega)$ is a permuton process for $\omega \in \Omega$. By identifying the random variable with its distribution we can also think of a random permuton process as a random element of $\MM(\PP)$. In this setting, with weak topology on $\MM(\PP)$, one can consider convergence in distribution of random permuton processes $X_{n}$ to a (possibly also random) permuton process $X$.

One can prove (see \cite{mustazee}) that if a sequence of random permutation processes $X^{\eta^N}$ converges in distribution, then the limit is a permuton process (in general also random). Of particular interest will be sequences of random permutation-valued paths $\eta^{N}$ (coming for example from the interchange process) such that the corresponding permutation processes $X^{\eta^N}$ converge in distribution to a \emph{deterministic} permuton process (for example the sine curve process described below).

For any random permuton process $X$ we define its associated \emph{random particle process} $\bar{X} = \E_{\omega} X(\omega)$, which is a process with a deterministic distribution, obtained by first sampling a permuton process $X(\omega)$ and then sampling a random path according to $X(\omega)$.

To elucidate the difference between random and deterministic permuton processes, consider a random permuton process $X$ and its associated random particle process $\bar{X}$. If we sample an outcome $X(\omega)$ and then a path from $X(\omega)$, then obviously the distribution of paths will be the same as for $\bar{X}$. However, consider now sampling an outcome $X(\omega)$ and then sampling independently two paths from $X(\omega)$. The distribution of a pair of paths obtained in this way will not in general be the same as the distribution of two independent copies sampled from $\bar{X}$, since the paths might be correlated within the outcome $X(\omega)$. The following general lemma will be useful later for showing that limits of certain random permutation processes are in fact deterministic (\cite[Lemma 3]{mustazee2}):

\begin{lemma}\label{lm:deterministic}
Let  $K$ be a compact metric space and let $\mu$ be a random probability measure on $K$, i.e., a random variable with values in $\MM(K)$. Let $X$ and $Y$ be two independent samples from an outcome of $\mu$ and let $Z$ be a sample from an outcome of an independent copy of $\mu$. If $(X,Y)$, as a $K^2$-valued random variable, has the same distribution as $(X,Z)$, then $\mu$ is in fact deterministic, i.e., there exists $\nu \in \MM(K)$ such that $\mu = \nu$ almost surely.
\end{lemma}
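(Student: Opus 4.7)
The plan is to translate the equality in distribution of $(X,Y)$ and $(X,Z)$ into a statement about the random measure $\mu$ itself, and then exploit separability of $C(K)$ to upgrade a countable family of almost-sure identities into the conclusion. Concretely, I would first unpack the two distributions. Conditional on an outcome of $\mu$, the pair $(X,Y)$ has distribution $\mu \otimes \mu$, so unconditionally $(X,Y)$ has distribution $\E[\mu \otimes \mu]$. For $(X,Z)$, on the other hand, $X$ is drawn from $\mu$ and $Z$ from an independent copy $\mu'$, so unconditionally the distribution is $\E[\mu] \otimes \E[\mu'] = \bar{\mu} \otimes \bar{\mu}$, where $\bar{\mu} := \E[\mu] \in \MM(K)$. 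The hypothesis therefore reads
\[
\E[\mu \otimes \mu] \;=\; \bar{\mu} \otimes \bar{\mu}.
\]

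Next I would test this identity against product functions. For any $f, g \in C(K)$, integrating both sides of the identity above against $f(x)g(y)$ gives
\[
\E\!\left[ \left(\int f\, d\mu\right)\!\left(\int g\, d\mu\right) \right] \;=\; \left(\int f\, d\bar{\mu}\right)\!\left(\int g\, d\bar{\mu}\right).
\]
Taking $g = f$ and noting that $\int f\, d\bar{\mu} = \E\left[\int f\, d\mu\right]$ by definition of $\bar{\mu}$, this says exactly that $\mathrm{Var}\!\left(\int f\, d\mu\right) = 0$ for every $f \in C(K)$. Hence for each individual $f$ there is a null set $N_f$ off of which $\int f\, d\mu = \int f\, d\bar{\mu}$.

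To conclude that $\mu = \bar{\mu}$ almost surely (rather than just for each fixed $f$), I would invoke separability: since $K$ is a compact metric space, $C(K)$ is separable, so fix a countable dense subset $\{f_n\}_{n \geq 1}$. The countable union $N := \bigcup_n N_{f_n}$ is still null, and on its complement $\int f_n \, d\mu = \int f_n \, d\bar{\mu}$ for every $n$. By density and uniform continuity of the map $f \mapsto \int f \, d\nu$ on $C(K)$ (for any probability measure $\nu$ on the compact space $K$), this extends to all $f \in C(K)$ almost surely, and since $C(K)$ is measure-determining this forces $\mu = \bar{\mu}$ on the complement of $N$.

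The argument is essentially mechanical once the distributional identity is unpacked, so I do not anticipate a real obstacle; the only mildly delicate point is the measurability setup needed to make sense of $\E[\mu \otimes \mu]$ as a probability measure on $K \times K$ and of quantities like $\int f\, d\mu$ as genuine real-valued random variables, which follows from the standard fact that $\MM(K)$ is a Polish space under the weak topology and the evaluation maps $\nu \mapsto \int f\, d\nu$ are continuous for $f \in C(K)$.
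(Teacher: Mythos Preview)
Your proof is correct. The paper does not actually prove this lemma; it merely states it and cites \cite[Lemma 3]{mustazee2}. Your argument---reducing the distributional hypothesis to $\E[\mu\otimes\mu]=\bar\mu\otimes\bar\mu$, deducing $\mathrm{Var}\bigl(\int f\,d\mu\bigr)=0$ for each $f\in C(K)$, and then using separability of $C(K)$ to pass from ``for each $f$'' to ``almost surely for all $f$''---is the standard and entirely sound route, and the measurability remarks you make at the end are exactly the right ones.
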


\paragraph{Energy.}

Here we introduce several related notions of energy for paths, permutations, permutons and permuton processes.

Given a path $\gamma : [0,T] \to [0,1]$ and a finite partition $\Pi = \{ 0 = t_{0} < t_{1} < \ldots < t_{k} = T \}$ we define the \emph{energy of $\gamma$ with respect to $\Pi$} as
\begin{equation}\label{eq:energy-path-def}
\EE^{\Pi}(\gamma) = \frac{1}{2} \sum\limits_{i=1}^{k} \frac{| \gamma(t_{i}) - \gamma(t_{i-1}) |^2}{t_{i} - t_{i-1}},
\end{equation}
and the \emph{energy of $\gamma$} as
\begin{equation}\label{eq:energy-path-full}
\EE(\gamma) = \sup_{\Pi} \EE^{\Pi}(\gamma),
\end{equation}
where the supremum is over all finite partitions $\Pi = \{ 0 = t_{0} < t_{1} < \ldots < t_{k} = T \}$. For a path which is not absolutely continuous the supremum is equal to $+\infty$. If a path $\gamma$ is differentiable, its energy is equal to
\[
\frac{1}{2} \int\limits_{0}^{T} \dot{\gamma}(s)^2 \, ds.
\]
For a permutation $\sigma \in \mathcal{S}_N$ we define its energy as
\begin{equation}\label{eq:permutation-energy}
I(\sigma) = \frac{1}{2} \left( \frac{1}{N} \sum\limits_{i=1}^{N} \left( \frac{\sigma(i) - i}{N} \right)^2 \right).
\end{equation}

Likewise, for a permuton $\mu \in \MM([0,1]^2)$ its energy is defined by
\begin{equation}\label{eq:permuton-energy-def}
I(\mu) = \frac{1}{2} \E |X-Y|^2,
\end{equation}
where the pair $(X, Y)$ has distribution $\mu$. If $\mu = \mu_{\sigma}$ is the empirical measure of a permutation $\sigma \in \mathcal{S}_N$, defined by \eqref{eq:permutation-empirical}, then we have $I(\mu_{\sigma}) = I(\sigma)$. Note also that $I = I(\mu)$ is a continuous function of $\mu$ in the weak topology on $\MM([0,1]^2)$.

Finally, we define the energy of a permuton process $\pi$ as
\begin{equation}\label{eq:process-energy}
I(\pi) = \E_{\gamma \sim \pi} \EE(\gamma),
\end{equation}
where the expectation is over paths $\gamma$ sampled from $\pi$. We can extend this definition to any process $\pi \in \MM(\DD)$ by adopting the convention that $I(\pi) = + \infty$ if paths sampled from $\pi$ are not absolutely continuous almost surely. The function $I$ will turn out to correspond to the rate function in large deviation bounds for random permuton process. It can be checked that $I$ is lower semicontinuous (in the weak topology on $\PP$) and its level sets $\{ \pi \in \PP : I(\pi) \leq C\}$ are compact.

We will also use the notation
\begin{equation}\label{eq:def-energy-fin-dim}
I^{\Pi}(\pi) = \E_{\gamma \sim \pi} \EE^{\Pi}(\gamma)
\end{equation}
to denote the approximation of energy of $\pi$ associated to the finite partition $\Pi$. The following lemma will be useful in characterizing the large deviation rate function in terms of these approximations
\begin{lemma}\label{lm:approximate-energy}
For any process $\pi \in \MM(\DD)$ we have
\[
I(\pi) = \sup\limits_{\Pi} I^{\Pi}(\pi),
\]
where the supremum is taken over all finite partitions $\Pi = \{ 0 = t_{0} < t_{1} < \ldots < t_{k} = T \}$.
\end{lemma}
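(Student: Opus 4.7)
The plan is to prove the trivial direction $\sup_\Pi I^\Pi(\pi) \leq I(\pi)$ by taking expectations in the pointwise bound $\EE^\Pi(\gamma) \leq \EE(\gamma)$, which is immediate from \eqref{eq:energy-path-full}, and to establish the reverse by applying the monotone convergence theorem along a specific nested sequence of partitions. I would fix the dyadic sequence $\Pi_n = \{\, kT/2^n : 0 \leq k \leq 2^n \,\}$, which is nested and has dense union in $[0,T]$.

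The first key step is monotonicity under refinement: $\EE^{\Pi}(\gamma) \leq \EE^{\Pi'}(\gamma)$ whenever $\Pi'$ refines $\Pi$. By induction this reduces to inserting a single extra point, where the inequality becomes
\[
\frac{(a+b)^2}{s+t} \leq \frac{a^2}{s} + \frac{b^2}{t},
\]
a special case of Cauchy--Schwarz applied to the two new increments $a,b$ over sub-intervals of lengths $s,t$. Setting $f_n(\gamma) = \EE^{\Pi_n}(\gamma)$, the sequence $(f_n)$ is therefore non-decreasing in $n$, so monotone convergence yields $\lim_n I^{\Pi_n}(\pi) = \E_{\gamma \sim \pi} \lim_n f_n(\gamma)$.

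The second step is the pointwise identity $\lim_n f_n(\gamma) = \EE(\gamma)$. For a continuous $\gamma$ and any partition $\Pi = \{\, 0 = s_0 < \ldots < s_m = T \,\}$, density of $\bigcup_n \Pi_n$ lets me choose $r^n_i \in \Pi_n$ with $r^n_i \to s_i$; by continuity of $\gamma$, the energy over the sub-partition $\{ r^n_0, \ldots, r^n_m \}$ converges to $\EE^\Pi(\gamma)$, and by the refinement monotonicity already established this sub-partition energy is dominated by $f_n(\gamma)$. Hence $\liminf_n f_n(\gamma) \geq \EE^\Pi(\gamma)$, and taking the supremum over $\Pi$ together with the trivial upper bound $f_n \leq \EE$ yields equality. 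For $\gamma$ with a jump of size $J>0$ at some $t^\ast$, the consecutive dyadic points bracketing $t^\ast$ produce, via the c\`adl\`ag property, a term of order $J^2 \cdot 2^n/T$ in $f_n(\gamma)$, forcing $f_n(\gamma) \to +\infty = \EE(\gamma)$; a continuous but non-absolutely-continuous $\gamma$ is handled by the classical $H^1$ characterization, in which a uniform bound on $f_n(\gamma)$ would force the piecewise-linear interpolants of $\gamma$ along $\Pi_n$ to form a bounded sequence in $H^1([0,T])$, yielding a weak limit that identifies $\gamma$ with an $H^1$ function and contradicts failure of absolute continuity.

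Putting the two steps together gives $\sup_\Pi I^\Pi(\pi) \geq \sup_n I^{\Pi_n}(\pi) = \lim_n \E f_n = \E \EE(\gamma) = I(\pi)$, which with the easy direction proves the lemma. The only delicate point is the pointwise identity in the non-absolutely-continuous case; a convenient shortcut, if one wishes to avoid Sobolev considerations, is to note that both sides of the lemma are $+\infty$ as soon as $\pi$ assigns positive mass to non-AC paths, and one can witness $\sup_\Pi I^\Pi(\pi) = +\infty$ directly by truncating to the event $\{ \EE(\gamma) > M \}$ and choosing, through a countable covering of this event, a single partition $\Pi$ on which a definite fraction of paths contribute an energy of order at least $M$.
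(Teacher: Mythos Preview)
Your proof is correct and follows the same route as the paper---dyadic partitions, refinement monotonicity, and monotone convergence---only with more detail supplied (the paper states the pointwise convergence $\EE^{\Pi_n}(\gamma)\to\EE(\gamma)$ for continuous $\gamma$ as ``elementary'' and does not explicitly treat discontinuous paths). One remark: your separate $H^1$ argument for continuous non-AC paths is redundant, since your density argument already yields $\lim_n f_n(\gamma)=\sup_\Pi \EE^{\Pi}(\gamma)=\EE(\gamma)$ for \emph{every} continuous $\gamma$, including when this supremum is $+\infty$.
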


\begin{proof}
Let $\Pi_n = \left\{0 < \frac{1}{2^n} < \frac{2}{2^n} < \ldots < 1\right\}$, $n=0,1,2,\ldots$, be the sequence of dyadic partitions of $[0,1]$. It is elementary to show that if a path $\gamma$ is continuous, then $\EE(\gamma) = \lim\limits_{n \to \infty} \EE^{\Pi_n}(\gamma)$.  
Note that if $\Pi'$ is a refinement of $\Pi$, then we have $\EE^{\Pi}(\gamma) \leq \EE^{\Pi'}(\gamma)$, thus $\EE^{\Pi_n}(\gamma) \to \EE(\gamma)$ monotonically as $n \to \infty$. Now we apply the monotone convergence theorem to get the same same convergence for the expectations $\E_{\gamma \sim \pi}\EE^{\Pi_n}(\gamma)$.
\end{proof}

\paragraph{The interchange process.}

The \emph{interchange process} on the interval $\{1, \ldots, N\}$ is a Markov process in continuous time defined in the following way. Consider particles labelled from $1$ to $N$ on a line with $N$ vertices. Each edge has an independent exponential clock that rings at rate $1$. Whenever a clock rings, the particles at the endpoints of the corresponding edge swap places. By comparing the initial position of each particle with its position after time $t$ we obtain a random permutation of $\{ 1, \ldots,N\}$.

Formally, we define the state space of the process as consisting of permutations $\eta \in \mathcal{S}_N$, with the notation $ \eta = (x_1, \ldots, x_N)$ indicating that the particle with label $i$ is at the position $x_{i}$, or in other words, $x_i = \eta(i)$. The dynamics is given by the generator
\begin{equation}\label{eq:unbiased-generator}
(\Ll f)(\eta) = \frac{1}{2} N^{\alpha} \sum\limits_{x=1}^{N-1} \left( f(\eta^{x,x+1}) - f(\eta) \right),
\end{equation}
where $\eta^{x, x+1}$ is the configuration $\eta$ with particles at locations $x$ and $x+1$ swapped and $\alpha \in (1,2)$ is a fixed parameter (introduced so that we will be able to consider the limit $N \to \infty$). Since we will also be considering variants of this process with modified rates, we will often refer to the process with generator $\Ll$ as the \emph{unbiased interchange process}.

The interchange process defines a probability distribution on permutation-valued paths $\eta^N = (\eta^N_t, 0 \leq t \leq T)$ for any $T \geq 0$. Consider now the permutation process $X^{\eta^N}$ associated to $\eta^N$, that is, sample $\eta^N$ according to the interchange process, pick a particle uniformly at random and follow its trajectory in $\eta^N$. The distribution $\mu^{\eta^N}$ of $X^{\eta^N}$, defined by \eqref{eq:empirical-eta}, is then a random element of $\MM(\DD)$.

The position of a random particle in the interchange process will be distributed as the stationary simple random walk (in continuous time) on the line $\{1, \ldots, N\}$. If we look at timescales much shorter than $N^2$, typically each particle will have distance $o(N)$ from its origin, so the permutation obtained at time $t$ such that $tN^{\alpha} \ll N^{2}$ will be close (in the sense of permutons) to the identity permutation. As mentioned in the introduction, we will be interested in large deviation bounds for rare events such as seeing a nontrivial permutation after a short time.

\end{subsection}

\begin{subsection}{Euler equations and generalized incompressible flows}\label{sec:incompressible-flows}

Let us now discuss the connection to fluid dynamics and incompressible flows (the discussion here follows \cite{ambrosio-figalli} and \cite{bernot-figalli-santambrogio}). The Euler equations describe the motion of an incompressible fluid in a domain $D \subseteq \R^d$ in terms of its \emph{velocity field} $u(t,x)$, which is assumed to be divergence-free. The evolution of $u$ is given in terms of the \emph{pressure field} $p$
\[
\begin{cases}
\partial_t u + (u \cdot \nabla)u = - \nabla p \quad & \mbox{in $[0,T] \times D$}, \\
\mathrm{div} \, u = 0 \quad & \mbox{in $[0,T] \times D$},\\
u \cdot n = 0 \quad & \mbox{on $[0,T] \times \partial D$},
\end{cases}
\]
where the second equation encodes the incompressiblity constraint and the third equation means that $u$ is parallel to the boundary $\partial D$.

Assuming $u$ is smooth, the trajectory $g(t,x)$ of a fluid particle initially at position $x$ is obtained by solving the equation
\[
\begin{cases}
\dot{g}(t,x) = u(t, g(t,x)), \\
g(0,x) = x.\\
\end{cases}
\]
Since $u$ is assumed to be divergence-free, the flow map $\Phi^{t}_{g} : D \to D$ given by $\Phi^{t}_{g}(x) = g(t,x)$ is a measure-preserving diffeomorphism of $D$ for each $t \in [0,T]$. This means that $(\Phi^{t}_{g})_{\ast} \mu_D = \mu_D$, where from now on by $f_{\ast}$ we denote the pushforward map on measures, associated to $f$, and $\mu_D$ is the Lebesgue measure inside $D$. Denoting by $\sdiff(D)$ the space of all measure-preserving diffeomorphisms of $D$, we can rewrite the Euler equations in terms of $g$
\begin{equation}\label{eq:euler-g}
\begin{cases}
\ddot{g}(t,x) = - \nabla p (t, g(t,x)) \quad & \mbox{in $[0,T] \times D$}, \\
g(0,x) = x & \mbox{in $D$},\\
g(t, \cdot) \in \sdiff(D) & \mbox{for each $t \in [0,T]$}.
\end{cases}
\end{equation}
Arnold proposed an interpretation according to which the equation above can be viewed as a geodesic equation on $\sdiff(D)$. Thus one can look for solutions to \eqref{eq:euler-g} by considering the variational problem
\begin{equation}\label{eq:arnold}
\mbox{minimize} \quad \frac{1}{2}\int\limits_{0}^{T} \int\limits_{D} |\dot{g}(t,x)|^2 \, d\mu_{D}(x) \, dt
\end{equation}
among all paths $g(t, \cdot) : [0,T] \to \sdiff(D)$ such that $g(0, \cdot) = f$, $g(T, \cdot) = h$ for some prescribed $f, h \in \sdiff(D)$ (by right invariance without loss of generality $f$ can be assumed to be the identity). The pressure $p$ then arises as a Lagrange multiplier coming from the incompressibility constraint.

Shnirelman proved (\cite{shnirelman}) that in dimensions $d \geq 3$ the infimum in this minimization problem is not attained in general and in dimension $d=2$ there exist diffeomorphisms $h = g(T, \cdot)$ which cannot be connected to the identity map by a path with finite action. This motivated Brenier (\cite{brenier}) to consider the following relaxation of this problem. With $C(D)$ denoting the space of continuous paths from $[0,T]$ to $D$ and $\MM(C(D))$ the set of probability measures on $C(D)$, the variational problem is
\begin{equation}\label{eq:minimize-brenier}
\mbox{minimize} \quad \int\limits_{C(D)} \left(\frac{1}{2}\int\limits_{0}^{T} |\dot{\gamma}(t)|^2 \, dt \right)  d\pi(\gamma)
\end{equation}
over all $\pi \in \MM(C(D))$ satisfying the constraints
\begin{equation}\label{eq:constraints}
\begin{cases}
 \pi_{0,T} = (id, h)_{\ast} \mu_D, \\
 \pi_t = \mu_D \quad \mbox{for each $t \in [0,T]$},
\end{cases}
\end{equation}
where $\pi_{0,T}$, $\pi_t$ denote the marginals of $\pi$ at times respectively $0,T$ and at time $t$.

Following Brenier, a probability measure $\pi \in \MM(C(D))$ satisfying constraints \eqref{eq:constraints} is called a \emph{generalized incompressible flow} between the identity $id$ and $h$. To see that indeed \eqref{eq:minimize-brenier} is a relaxation of \eqref{eq:arnold}, note that any sufficiently regular path $g(t, \cdot) : [0,T] \to \sdiff(D)$, for example corresponding to a solution of \eqref{eq:euler-g}, induces a generalized incompressible flow given by $\pi = (\Phi_g)_{\ast} \mu_D$, where as before $\Phi_g(x) = g(\cdot, x)$. As evidenced by the sine curve process mentioned in the introduction, the converse is false -- trajectories of particles sampled from a generalized flow can cross each other or split at a later time when starting from the same position, which is not possible for classical, smooth flows. We refer the reader to \cite{brenier-physica} for an interesting discussion of physical relevance of this phenomenon.

The problem admits a natural further relaxation in which the target map is ``non-deterministic'', in the sense that we have $\pi_{0,T} = \mu$ with $\mu$ being an arbitrary probability measure supported on $D \times D$ and having uniform marginals on each coordinate, not necessarily of the form $\mu = (id, h)_{\ast}\mu_{D}$ for some map $h$. From now on whenever we refer to problem \eqref{eq:minimize-brenier} or generalized incompressible flows we will be always considering this more general variant.

The connection between the generalized problem \eqref{eq:minimize-brenier} and the original Euler equations \eqref{eq:euler-g} is provided by a theorem due to Ambrosio and Figalli (\cite{ambrosio-figalli}), with earlier weaker results by Brenier (\cite{brenier-distribution}). Roughly speaking, they showed that given a measure $\mu$ with uniform marginals there exists a pressure function $p(t,x)$ such that the following holds -- one can replace the problem of minimizing the functional \eqref{eq:minimize-brenier} over incompressible flows satisfying $\pi_{0,T} = \mu$ by an easier problem in which the incompressibility constraint is dropped, provided one adds to the functional a Lagrange multiplier given by $p$. We refer the reader to \cite[Section 6]{ambrosio-figalli} for a precise formulation and further results on regularity of $p$.

In particular, if $\pi$ is optimal for \eqref{eq:minimize-brenier} and the corresponding pressure $p$ is smooth enough, their result implies that almost every path $\gamma$ sampled from $\pi$ minimizes the functional
\begin{equation}\label{eq:functional-with-p}
\gamma \mapsto \int\limits_{0}^{T} \left( \frac{1}{2} |\dot{\gamma}(t)|^2 - p(t, \gamma(t)) \right) dt.
\end{equation}
In that case the equation $\ddot{g}(t,x) = - \nabla p(t, g(t,x))$ from \eqref{eq:euler-g} is nothing but the Euler-Lagrange equation for extremal points of the functional \eqref{eq:functional-with-p}. We can therefore, at least under some regularity assumptions on $p$, think of generalized incompressible flows as solutions to \eqref{eq:euler-g} in which instead of having a diffeomorphism we assume random initial conditions for each particle.

From now on let us restrict the discussion to $D = [0,1]$, which will be most directly relevant to the results of this paper. In this case the original problem \eqref{eq:arnold} is somewhat uninteresting, since the only measure-preserving diffeomorphisms of $[0,1]$ are $f(x) = x$ and $f(x) = 1 - x$. However, the relaxed problem \eqref{eq:minimize-brenier} is non-trivial and indeed for the target map $h(x) = 1 - x$ and $T = 1$ the unique optimal solution is given by the sine curve process.

In this setting, the reader may recognize that generalized incompressible flows are in fact the same objects as permuton processes. The term \emph{measure-preserving plans} is used in \cite{ambrosio-figalli} for what we call permutons. The functional minimized in \eqref{eq:minimize-brenier} is the energy $I(\pi)$ of a permuton process, defined in \eqref{eq:process-energy}. In this language the optimization problem we are interested in can be rephrased as follows:
\begin{equation}\label{eq:minimize-permuton}
\mbox{find} \inf\limits_{\substack{\pi \in \PP \\ \pi_{0,T} = \mu}} I(\pi),
\end{equation}
where the infimum is over all permuton processes $\pi \in \PP$ satisfying $\pi_{0,T} = \mu$ for a given permuton $\mu \in \MM([0,1]^2)$.

\paragraph{Generalized solutions to Euler equations.}

We will say that a permuton process $\pi$ is a \emph{generalized solution to Euler equations} if there exists a function $p : [0,T] \times [0,1] \to \R$, differentiable in the second variable, such that almost every path $x : [0,T] \to [0,1]$ sampled from $\pi$ satisfies the equation
\begin{equation}\label{eq:gen-euler}
\begin{cases}
		x'(t) = v(t) \\
		v'(t) = - \partial_x p(t, x(t)) \\
\end{cases}
\end{equation}
for $t \in [0,T]$. This is of course equivalent to $x''(t) = - \partial_x p (t, x(t))$.

By the remarks above, if $\pi$ minimizes the energy in \eqref{eq:minimize-permuton} and the associated pressure $p$ is smooth enough, then $\pi$ is always a generalized solution to Euler equations. However, this is only a necessary condition -- for a discussion of corresponding sufficient conditions see \cite{bernot-figalli-santambrogio}.
\end{subsection}

\begin{subsection}{Proof outline and structure of the paper}\label{sec:main-results}

Let us now give a brief outline of the proof strategy for Theorem \ref{th:theorem-main-lower} and Theorem \ref{th:theorem-main-upper}. For the lower bound, given a process $X$ we construct a perturbation of the interchange process (defined by introducing asymmetric jump rates based on \eqref{eq:gen-euler}) for which a law of large numbers holds, namely, the distribution of the path of a random particle converges to a deterministic limit (which is the distribution of $X$). The large deviation principle is then proved by estimating the Radon-Nikodym derivative between the biased process and the original one.

The key property which makes this construction possible is that the process $X$ satisfies a second order ODE given by \eqref{eq:gen-euler}, so its trajectories are fully specified by the particle's position and velocity (the latter chosen initially from a mean zero distribution). The biased process is then constructed by endowing each particle with an additional parameter keeping track of its velocity, but we perform an additional change variables, working instead of velocity with a variable we call \emph{color}. The advantage of this is that the uniform distribution of colors is stationary when the jump rates are properly chosen, which will greatly facilitate the analysis. An additional technical difficulty arises if the velocity distribution of $X$ is time-dependent or not regular enough near the boundary, in which case we first approximate $X$ by a process with a sufficiently regular and piecewise time-homogeneous velocity distribution.

To prove the law of large numbers we need to show that in the biased interchange process particles' trajectories behave approximately like independent samples from $X$. This requires proving that their velocities remain uncorrelated when averaged over time and is accomplished by means of a local mixing result called the \emph{one block estimate}. It is here that we rely on stationarity of the uniform distribution of colors in the biased process and the fact that $X$ has velocity zero on average.

The strategy for proving the upper bound is somewhat simpler. We consider a family of exponential martingales similar to the one employed in analyzing independent random walks and use the one block estimate to show that the particles' velocities are typically nonnegatively correlated. This enables us to prove the large deviation upper bound for compact sets and the extension to closed sets is done by proving exponential tightness. 

\paragraph{Structure of the paper.}

The rest of the paper is structured as follows. In Section \ref{sec:ode-part} we introduce the change of variables needed to define the process with colors and prove the approximation result for $X$ mentioned above (Proposition \ref{prop:approximation-epsilon-delta}). In Section \ref{sec:interchange} we define the biased interchange process and derive the conditions on its rates which guarantee stationarity. Section \ref{sec:lln} contains the proof of the law of large numbers for the biased interchange process (Theorem \ref{th:lln}). In Section \ref{sec:one-block} we prove two variants of the one block estimate -- one needed for the large deviation upper bound (Lemma \ref{lm:one-block-superexponential-probability}) and a more involved one needed for the proof of the law of large numbers (Lemma \ref{lm:one-block-superexponential-biased}). In Section \ref{sec:lower-bound} these pieces are then used to prove the large deviation lower bound (Theorem \ref{thm:lower-bound-for-minimizers}). Section \ref{sec:upper-bound} is devoted to the proof of the large deviation upper bound (Theorem \ref{th:upper-bound}) and is independent of the previous sections (apart from the use of Lemma \ref{lm:one-block-superexponential-probability}). Finally, in Section \ref{sec:asymptotics} we prove Theorem \ref{th:lln-for-relaxed-networks} and Theorem \ref{th:asymptotics-relaxed} on relaxed sorting networks.
\end{subsection}
\end{section}

\begin{section}{ODEs and generalized solutions to Euler equations}\label{sec:ode-part}

\paragraph{Regularity assumptions and properties of generalized solutions.}

Suppose $\pi$ is a generalized solution to Euler equations \eqref{eq:gen-euler} and let $X$ be a process with distribution $\pi$. For the proof of the large deviation lower bound we will need to impose additional regularity assumptions on $\pi$. For $t \in [0,T]$ let $\mu_t$ denote the joint distribution of $(x(t), x'(t))$ when $x$ is sampled according to $\pi$. In particular, $\mu_0$ is the joint distribution of the initial conditions of the ODE \eqref{eq:gen-euler}. If $\Phi^{t,s}(x,v)$ denotes the solution $x(s)$ of \eqref{eq:gen-euler} satisfying $(x(t), v(t)) = (x, v)$, then $\mu_t = \Phi^{0,t}_{\ast}\mu_0$.

We will assume that each $\mu_t$ has a density $\rho_t(x,v)$ with respect to the Lebesgue measure on $[0,1] \times \R$. For $x \in [0,1]$ and $t \in [0,T]$ let $\mu_{t,x}$ denote the conditional distribution of $v$, given $x$, at time $t$. In addition we assume that for $x=0$ or $1$ the distribution $\mu_{t,x}$ is a delta mass at $0$, as otherwise the process $X$ cannot stay confined to $[0,1]$ and have mean velocity zero everywhere (see the discussion of incompressiblity below).

Let $F_{t,x}$ denote the cumulative distribution function of $\mu_{t,x}$ and let $V_{t}(x, \cdot) : [0,1] \to \R$ be the quantile function of $\mu_{t,x}$, defined for $x \in [0,1]$ and $\phi \in (0,1]$ by
\[
V_{t}(x,\phi) = \inf\left\{ v \in \R \, | \, F_{t,x}(v) \geq \phi \right\}
\]
and $V_t(x, 0) = \inf\left\{ v \in \R \, | \, F_{t,x}(v) > 0 \right\}.$ In particular for $x=0,1$ we have $V_{t}(x,\phi) = 0$.

\begin{assumption}\label{as:main-assumptions}
Throughout the paper, we will assume that for a generalized solution to Euler equations $\pi$ the following properties are satisifed
\begin{enumerate}[(1)]
\item the pressure function $(t,x) \mapsto p(t,x)$ in \eqref{eq:gen-euler} is measurable in $t$ and differentiable in $x$, with the derivative $\partial_x p(t,x)$ Lipschitz continuous in $x$ (with the Lipschitz constant uniform in $t$)
\item there exists a compact set $K \subseteq [0,1] \times \R$ such that for each $t \in [0,T]$ the density $\rho_t$ is supported in $K$
\item for $t \in [0,T], x \in [0,1]$ the support of $\mu_{t,x}$ is a connected interval in $\R$
\item the density $\rho_t$ is continuously differentiable in $t$, $x$ and $v$ for each $t \in [0,T]$ and $x,v$ in the interior of the support of $\rho_t$
\end{enumerate}
\end{assumption}

Let us comment on the relevance of these assumptions. Assumption (1) will guarantee uniqueness of solutions to \eqref{eq:gen-euler}. Assumption (2) implies that the velocity of a particle moving along a path sampled from $\pi$ stays uniformly bounded in time. Assumption (3) implies that for any $x \in (0,1)$ and $\phi \in [0,1]$ we have $F_{t,x}(V_{t}(x, \phi)) = \phi$, i.e., $V_t(x, \cdot)$ is the inverse function of $F_{t,x}$. Assumptions (3) and (4) imply that $V_{t}(x,\phi)$ is a continuous function of $t$, $x$, $\phi$ and it is continuously differentiable in all variables for $x \in (0,1)$.

Note that for $V_t(x,\phi)$ to be differentiable at $\phi = 0,1$, the distribution function $F_{t,x}$ necessarily has to be non-differentiable at corresponding $v$ such that $F_{t,x}(v) = \phi$. This is why we can require the density $\rho_t$ to be smooth only in the interior of its support and not at the boundary.

From now on we assume that $\pi$ is a fixed generalized solution to Euler equations, satisfying Assumptions \eqref{as:main-assumptions}. Almost every path $x : [0,T] \to [0,1]$ sampled from $\pi$ satisfies the ODE
\begin{equation}\label{eq:true-process}
\begin{cases}
		x'(t) = v(t) \\
		v'(t) = - \partial_x p(t, x(t)). \\
\end{cases}
\end{equation}

Note that since $\pi$ is a permuton process, each measure $\mu_t$ satisfies the \emph{incompressibility} condition, meaning that its projection onto the first coordinate is equal to the uniform measure on $[0,1]$. This is equivalent to the property that for any test function $f : [0,1] \to \R$ we have
\[
\int\limits f(x) \, d\mu_t(x,v) = \int\limits_{0}^{1} f(x) \, dx.
\]
An important consequence of the incompressibility assumption is that under $\mu_t$ the velocity has mean zero at each $x$, that is, we have the following
\begin{lemma}\label{lm:velocity-mean-zero}
For any $t \in [0,T]$ and $x \in [0,1]$ we have
\[
\int v \, d\mu_{t,x}(v) = 0.
\]
 \end{lemma}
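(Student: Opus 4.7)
The plan is to exploit two facts: the incompressibility of $\pi$ (uniform first marginal of $\mu_t$ at every time) and the ODE \eqref{eq:true-process} satisfied by almost every sampled path. Roughly, incompressibility is a conservation law for the density under the flow, and the first-order consequence of that conservation is precisely that the mean velocity at each point vanishes.

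First, I would fix a test function $f \in C^1([0,1])$. Sampling a path $x(\cdot)$ from $\pi$ and using $x'(t) = v(t)$ along the ODE, I compute $\frac{d}{dt} f(x(t)) = f'(x(t)) v(t)$. By Assumption \eqref{as:main-assumptions}(2), $|v(t)|$ is uniformly bounded, so I can interchange differentiation and expectation via dominated convergence to obtain
\begin{equation*}
\frac{d}{dt} \, \E [ f(x(t)) ] \;=\; \E [ f'(x(t)) \, v(t) ] \;=\; \int f'(x)\, v \, d\mu_t(x,v).
\end{equation*}
Second, incompressibility tells me that $\E[f(x(t))] = \int_0^1 f(x)\,dx$ is constant in $t$, so the left side vanishes. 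Disintegrating $d\mu_t(x,v) = dx \cdot d\mu_{t,x}(v)$ and setting $m(t,x) := \int v\, d\mu_{t,x}(v)$, I arrive at
\begin{equation*}
\int_0^1 f'(x)\, m(t,x) \, dx \;=\; 0 \qquad \text{for every } f \in C^1([0,1]).
\end{equation*}

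Third, I would invoke duality: given any continuous $g:[0,1]\to\R$, taking $f(x) = \int_0^x g(y)\,dy$ gives $f' = g$, so $\int_0^1 g(x)\, m(t,x)\,dx = 0$ for all continuous $g$, forcing $m(t,\cdot) = 0$ almost everywhere on $[0,1]$. Assumptions \eqref{as:main-assumptions}(3)--(4) guarantee that $V_t(x,\phi)$ is continuous in $x$, and since $m(t,x) = \int_0^1 V_t(x,\phi)\,d\phi$, the function $m(t,\cdot)$ is itself continuous; combined with vanishing almost everywhere (and with the boundary condition $m(t,0) = m(t,1) = 0$ from the assumption on $\mu_{t,x}$ at $x=0,1$), this yields $m(t,x) = 0$ pointwise for every $x \in [0,1]$.

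I expect no serious obstacle here; the only delicate point is justifying the interchange of $\frac{d}{dt}$ and $\E$, which is routine given the uniform boundedness of $v$ from Assumption \eqref{as:main-assumptions}(2), and the disintegration step, which is standard given that the first marginal of $\mu_t$ is Lebesgue. The conceptual content is simply that constancy of the spatial density along the flow is equivalent (to first order) to the mean velocity at each point being zero.
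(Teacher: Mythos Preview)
Your proof is correct and follows essentially the same approach as the paper: both differentiate $\E[f(x(t))]$ in time (the paper via the flow map $\Phi^{t,t+s}$ pushing forward $\mu_t$, you via a sampled path and dominated convergence), use incompressibility to see this is constant, obtain $\int f'(x)\,v\,d\mu_t(x,v)=0$ for all test functions, and then upgrade from almost-everywhere to pointwise by continuity. Your treatment of the last two steps is slightly more explicit than the paper's, but the argument is the same.
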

\begin{proof}
Consider any test function $f : [0,1] \to \R$ and write
\[
\int\limits f(x) \, d\mu_{t+s}(x,v) = \int\limits f(x) \, d(\Phi^{t,t+s}_{\ast}\mu_t) (x,v) = \int\limits f(\Phi^{t,t+s}(x,v)) \, d\mu_{t}(x,v).
\]
By incompressibility the integral above is always equal to $\int\limits_{0}^{1} f(x) \, dx $, in particular does not depend on time. On the other hand its derivative with respect to $s$ is
\[
\frac{d}{ds} \int\limits f(x) \, d\mu_{t+s}(x,v) = \frac{d}{ds} \int\limits f(\Phi^{t,t+s}(x,v)) \, d\mu_{t}(x,v) = \int\limits f'(\Phi^{t,t+s}(x,v)) \frac{d\Phi^{t,t+s}}{ds}(x,v) \, d\mu_{t}(x,v).
\]
Since $\Phi^{t,t+s}(x,v)\vert_{s=0} = x$ and $\frac{d\Phi^{t,t+s}}{ds}(x,v)\vert_{s=0} = v$, by evaluating the derivative at $s = 0$ we arrive at $\int\limits f'(x) v \, d\mu_{t}(x,v) = 0$. Since $\int g(x,v) \, d\mu_t(x,v) = \int g(x,v) \, d\mu_{t,x}(v) dx$ for any measurable $g$ and $f$ was an arbitrary test function, the claim of the lemma holds for almost every $x$. Since we have assumed that $\mu_{t}$ has a continuous density, the claim in fact holds for all $x$, which ends the proof.
\end{proof}

We will also make use of an explicit evolution equation that the densities $\rho_t$ have to satisfy. This is the content of the following lemma.
\begin{lemma}\label{lm:evolution-equation}
For any $t \in [0,T]$ and $x, v$ in the interior of the support of $\rho_t$ we have
\[
\frac{\partial \rho_t}{\partial t} (x,v) = - v \frac{\partial \rho_t}{\partial x} (x,v) + \partial_x p(t, x) \frac{\partial \rho_t}{\partial v} (x,v).
\]
\end{lemma}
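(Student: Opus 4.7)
The plan is to recognize the claimed equation as the Liouville (continuity) equation for the flow $\Phi^{t,s}$ on the phase space $[0,1] \times \R$, and then extract the pointwise PDE from its weak form via a test function argument on the interior of the support.

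First, I would fix $t_{0} \in [0,T]$ and a point $(x_{0}, v_{0})$ in the interior of the support of $\rho_{t_{0}}$. By Assumption (4), $\rho_{t}$ is continuously differentiable in all variables on a neighborhood of $(t_{0}, x_{0}, v_{0})$. Choose a smooth test function $\phi \in C_{c}^{\infty}([0,1] \times \R)$ with support contained in the interior of the support of $\rho_{t}$ for all $t$ in a small interval around $t_{0}$ (this is possible by Assumption (2) together with continuity of the support in $t$, which follows from Assumption (4) and the fact that $\mu_{t} = \Phi^{0,t}_{\ast}\mu_{0}$ with $\Phi^{0,t}$ a homeomorphism by Assumption (1)).

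Next, I would compute $\frac{d}{ds}\big\vert_{s=0} \int \phi \, d\mu_{t_{0}+s}$ in two different ways. On the one hand, since $\mu_{t_{0}+s} = \Phi^{t_{0}, t_{0}+s}_{\ast}\mu_{t_{0}}$, we may write
\[
\int \phi(x,v) \, d\mu_{t_{0}+s}(x,v) = \int \phi\bigl(\Phi^{t_{0}, t_{0}+s}(x,v)\bigr) \, d\mu_{t_{0}}(x,v).
\]
Differentiating under the integral at $s=0$, and using the ODE \eqref{eq:true-process} which gives $\frac{d}{ds}\Phi^{t_{0},t_{0}+s}(x,v)\big\vert_{s=0} = (v, -\partial_{x} p(t_{0},x))$, yields
\[
\frac{d}{ds}\bigg\vert_{s=0} \int \phi \, d\mu_{t_{0}+s} = \int \Bigl( v \, \partial_{x}\phi(x,v) - \partial_{x} p(t_{0}, x) \, \partial_{v}\phi(x,v) \Bigr) \rho_{t_{0}}(x,v)\, dx\, dv.
\]
On the other hand, writing the same quantity as $\int \phi(x,v)\, \partial_{t}\rho_{t_{0}}(x,v)\, dx\, dv$ and then integrating by parts in $x$ and $v$ (the boundary terms vanish since $\phi$ is compactly supported in the interior), I obtain
\[
\frac{d}{ds}\bigg\vert_{s=0}\int \phi\, d\mu_{t_{0}+s} = -\int \phi(x,v)\Bigl( v\,\partial_{x}\rho_{t_{0}}(x,v) - \partial_{x} p(t_{0},x)\, \partial_{v}\rho_{t_{0}}(x,v)\Bigr) dx\, dv.
\]

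Equating the two expressions and applying integration by parts once more on the first expression (moving derivatives off $\phi$ onto $\rho_{t_{0}}$), the result is
\[
\int \phi(x,v)\Bigl[ \partial_{t}\rho_{t_{0}}(x,v) + v\,\partial_{x}\rho_{t_{0}}(x,v) - \partial_{x} p(t_{0},x)\, \partial_{v}\rho_{t_{0}}(x,v)\Bigr] dx\, dv = 0
\]
for every such test function $\phi$. Since $\phi$ was an arbitrary smooth function with compact support in the interior of the support of $\rho_{t_{0}}$, and since the bracketed expression is continuous there by Assumption (4), it must vanish pointwise. Rearranging gives the stated identity.

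The main technical point to nail down is the justification for differentiating under the integral, which uses Assumption (1) (Lipschitz continuity of $\partial_{x} p$ and hence $C^{1}$ dependence of the flow $\Phi^{t,s}$ on initial data) together with Assumption (2) (compact support, ensuring uniform bounds on $v$ and on $|\partial_{x}p|$). A more conceptual way to view the whole argument is that the phase-space vector field $(v, -\partial_{x}p(t,x))$ is divergence-free, so the Liouville equation $\partial_{t}\rho_{t} + \nabla_{(x,v)}\cdot(\rho_{t}\,(v, -\partial_{x}p)) = 0$ reduces to the transport form stated; the test function computation above is simply the rigorous weak-form derivation of this fact.
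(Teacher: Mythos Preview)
Your approach is exactly the one the paper takes: compute $\frac{d}{ds}\big\vert_{s=0}\int \phi\, d\mu_{t+s}$ in two ways --- once by pushing forward under the flow $\Phi^{t,t+s}$ and once directly as $\int \phi\, \partial_t \rho_t$ --- integrate by parts to move derivatives from $\phi$ onto $\rho_t$, and conclude pointwise since $\phi$ is arbitrary.

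There is, however, a bookkeeping slip in your ``on the other hand'' paragraph. The displayed identity there is not obtained by writing the quantity as $\int \phi\, \partial_t \rho_{t_0}$ and then integrating by parts in $x$ and $v$: there are no spatial derivatives in $\int \phi\, \partial_t \rho_{t_0}$ to integrate by parts. What you have actually written down is the result of integrating your \emph{first} expression by parts. The clean organization (which is precisely what the paper does) is: the flow computation gives $\int (v\,\partial_x\phi - \partial_x p\,\partial_v\phi)\rho_{t_0}\,dx\,dv$, and one integration by parts turns this into $-\int \phi\,(v\,\partial_x\rho_{t_0} - \partial_x p\,\partial_v\rho_{t_0})\,dx\,dv$; the direct computation gives $\int \phi\,\partial_t\rho_{t_0}\,dx\,dv$ with no further manipulation; equating these two yields your final displayed equation immediately, with no ``once more'' step needed. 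With this straightened out, your argument matches the paper's line for line.
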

\begin{proof}
Let $f : [0,1] \times \R \to \R$ be any test function and consider the integral
\[
I_{t+s} = \int f(x,v) \, d\mu_{t+s}(x,v).
\]
On the one hand, its derivative with respect to $s$ is equal to
\begin{align*}
 \frac{d}{ds}I_{t+s} & = \frac{d}{ds} \int f(x,v) \, d\mu_{t+s}(x,v) =\frac{d}{ds} \int f\left(\Phi^{t,t+s}(x,v),\frac{d\Phi^{t,t+s}}{ds}(x,v)\right) \rho_t (x,v) \, dx \, dv = \\
& = \int  \bigg[ \frac{\partial f}{\partial x}\left(\Phi^{t,t+s}(x,v),\frac{d\Phi^{t,t+s}}{ds}(x,v)\right)\frac{d\Phi^{t,t+s}}{ds}(x,v)  + \\
& +  \frac{\partial f}{\partial v}\left(\Phi^{t,t+s}(x,v),\frac{d\Phi^{t,t+s}}{ds}(x,v)\right)\frac{d^2\Phi^{t,t+s}}{ds^2}(x,v) \bigg] \rho_t (x,v) \, dx \, dv.
\end{align*}
Since $\Phi^{t,t+s}(x,v)$ is a solution to \eqref{eq:true-process}, we have $\frac{d\Phi^{t,t+s}}{ds}(x,v)\big\vert_{s=0} = v$ and $\frac{d^2\Phi^{t,t+s}}{ds^2}(x,v)\big\vert_{s=0} = -\partial_x p(t, x)$, which gives us
\[
\frac{d}{ds}I_{t+s}\Big\vert_{s=0} = \int \left( \frac{\partial f}{\partial x}(x,v)v - \frac{\partial f}{\partial v}(x,v)\partial_x p(t, x) \right) \rho_t (x,v) \, dx \, dv.
\]
Performing integration by parts with respect to $x$ for the first term and with respect to $v$ for the second term gives (noting that $f$ has compact support so the boundary terms vanish)
\[
\frac{d}{ds}I_{t+s}\Big\vert_{s=0} = -\int f(x,v)v \frac{\partial \rho_t}{\partial x} (x,v) \, dx \, dv + \int f(x,v)\partial_x p(t, x) \frac{\partial \rho_t}{\partial v} (x,v) \, dx \, dv.
\]
On the other hand, we have
\[
\frac{d}{ds}I_{t+s} = \frac{d}{ds} \int f(x,v) \, d\mu_{t+s}(x,v) = \frac{d}{ds} \int f(x,v) \rho_{t+s} (x,v) \, dx \, dv = \int f(x,v) \frac{\partial \rho_{t+s}}{\partial s} (x,v) \, dx \, dv,
\]
so
\[
\frac{d}{ds}I_{t+s}\Big\vert_{s=0} = \int f(x,v) \frac{\partial \rho_{t}}{\partial t} (x,v) \, dx \, dv
\]
and thus
\[
\int f(x,v) \left(- v \frac{\partial \rho_t}{\partial x} (x,v) + \partial_x p(t, x) \frac{\partial \rho_t}{\partial v} (x,v) - \frac{\partial \rho_{t}}{\partial t} (x,v) \right) dx \, dv.
\]
Since the test function $f$ was arbitrary, the equation from the statement of the lemma must hold for every $t$, $x$, $v$ as assumed.
\end{proof}

\paragraph{The colored trajectory process.}

Let $X = (X_t, 0 \leq t \leq T)$ be the permuton process with distribution $\pi$. For the large deviation lower bound we will need to construct a suitable interacting particle system in which the behavior of a random particle mimics that of the permuton process $X$. A crucial ingredient will be a property analogous to Lemma \ref{lm:velocity-mean-zero}, i.e., having velocity distribution whose mean is locally zero. Instead of working with velocity $v$, whose distribution $\rho_t(x,v)$ at a given site $x$ may change in time, it will be more convenient to perform a change variables and use another variable $\phi$, which we call \emph{color}, whose distribution will be invariant in time.

Recall that under Assumptions \eqref{as:main-assumptions} the distribution function $F_{t,x}(\cdot)$ and the quantile function $V_{t}(x, \cdot)$ are related by
\begin{equation}\label{eq:cdf}
\begin{cases}
F_{t,x}(V_{t}(x, \phi)) = \phi \\
V_t(x, F_{t,x}(v)) = v
\end{cases}
\end{equation}
for any $t \in [0,T]$, $x \in (0,1)$, $\phi \in [0,1]$, $v \in \supp \, \mu_{t,x}$.

The reason for introducing the variable $\phi$ is the following elementary property -- if $\phi$ is sampled from the uniform distribution on $[0,1]$, then $V_t(x, \phi)$ is distributed according to $\mu_{t,x}$. Thus instead of working with $(x,v)$ variables in the ODE \eqref{eq:true-process}, where the distribution of $v$ evolves in time, we can set up an ODE for $x$ and $\phi$ such that the joint distribution of $(x, \phi)$ will be uniform on $[0,1]^2$ at each time. The velocity $v$ and its distribution can then be recovered via the equation $v = V_t(x, \phi)$.

Let $(x(t), v(t))$ be a solution to \eqref{eq:true-process} such that $x(t) \neq 0,1$ and let
\[
\phi(t) = F_{t, x(t)}(v(t)).
\]
Let us derive the ODE that $(x(t), \phi(t))$ satisifes. Since $(x(t), v(t))$ is a solution of \eqref{eq:true-process}, we have
\begin{align*}
\phi'(t) = & \frac{\partial F_{t, x(t)}}{\partial t}(v(t)) + \frac{\partial F_{t, x(t)}}{\partial x}(v(t))x'(t) + \frac{\partial F_{t, x(t)}}{\partial v}(v(t)) v'(t) = \\
 = &\frac{\partial F_{t, x(t)}}{\partial t}(v(t)) + \frac{\partial F_{t, x(t)}}{\partial x}(v(t))v(t) + \rho_t(x(t), v(t)) \left[ -\partial_x p (t, x(t)) \right].
\end{align*}
Lemma \ref{lm:evolution-equation} implies that
\[
\frac{\partial F_{t, x(t)}}{\partial t} (x(t), v(t)) = - \int\limits_{-\infty}^{v(t)} w \frac{\partial \rho_t}{\partial x} (x(t),w) + \left[ \partial_x p(t, x(t)) \right] \rho_t (x(t),v(t)),
\]
which gives
\[
\phi'(t) = \frac{\partial F_{t, x(t)}}{\partial x}(v(t))v(t) - \int\limits_{-\infty}^{v(t)} w \frac{\partial \rho_t}{\partial x} (x(t),w)
\]
and upon integrating by parts in the last integral we obtain
\begin{equation}\label{eq:phi-prime}
\phi'(t) = \int\limits_{-\infty}^{v(t)} \frac{\partial F_{t,x(t)}}{\partial x}(x(t),w) \, dw.
\end{equation}
Now, differentiating \eqref{eq:cdf} with respect to $x$ and $\phi$ gives
\[
\begin{cases}
\frac{\partial F_{t,x}}{\partial x}(V_t(x, \phi)) + \rho_t(x,\phi) \frac{\partial V_t}{\partial x}(x, \phi) = 0 \\
\rho_t(x, \phi) \frac{\partial V_t}{\partial \phi}(x, \phi) = 1.
\end{cases}
\]
Also by \eqref{eq:cdf} we have $v(t) = V_t(x(t), \phi(t))$, so a change of variables $w = V_t(x(t), \psi)$ in \eqref{eq:phi-prime} yields
\[
\phi'(t) = R_t(x(t), \phi(t)),
\]
where $R_t(x,\phi) = - \int\limits_{0}^{\phi} \frac{\partial V_t}{\partial x}(x, \psi) \, d\psi$.

Thus we have shown that $(x(t), \phi(t))$ satisfies the ODE
\begin{equation}\label{eq:process-with-color}
\begin{cases}
		x'(t) = V_t(x(t), \phi(t)) \\
		\phi'(t) = R_t(x(t), \phi(t)). \\
\end{cases}
\end{equation}
If $x(t) \neq 0,1$, this equation is equivalent to \eqref{eq:true-process}, i.e., $(x(t), \phi(t))$ is a solution of \eqref{eq:process-with-color} with initial conditions $(x(0), \phi(0)) = (x_0, \phi_0)$ if and only if $(x(t),v(t))$ is a solution of \eqref{eq:true-process} with initial conditions $(x(0), v(0))= (x_0, V_0(x_0, \phi_0))$. We also note that Lemma \ref{lm:velocity-mean-zero} expressed in terms of $(x, \phi)$ variables states that for each $t \in [0,T]$ and $x \in [0,1]$ we have
\begin{equation}\label{eq:mean-zero-color}
\int\limits_{0}^{1} V_t(x, \psi) \, d\psi = 0.
\end{equation}

From now on we work exclusively with \eqref{eq:process-with-color}. We will need to make two approximations necessary for the interacting particle system analysis later on. One is necessitated by the fact that the function $V_t(x, \phi)$ might not be smooth with respect to $x$ at the boundaries $x = 0,1$ (this happens, for example, for the sine curve process). We will therefore replace the function by its smooth approximation in a $\beta$-neighborhood of the boundary and in the end take $\beta \to 0$. The other approximation consists in dividing the time interval $[0,T]$ into intervals of length $\delta$ and approximating $V_t(x, \phi)$ for given $x, \phi$ with a piecewise-constant function of $t$. This will enable us to give a simple stationarity condition for the corresponding interacting particle system and in the end take $\delta \to 0$ a well.

Let $\beta \in (0,\frac{1}{4})$ and let $V_{t}^{\beta}(x, \phi)$ be a function with the following properties
\begin{enumerate}[(a)]
\item $V_{t}^{\beta}(x, \phi)$ is continuously differentiable for every $t \in [0,T]$, $x \in [0,1]$, $\phi \in [0,1]$,
\item $V_{t}^{\beta}(x, \phi) = V_{t}(x, \phi)$ for $x \in \left[\beta, 1 - \beta\right]$ and $V_{t}^{\beta}(0,\phi) = V_{t}^{\beta}(1,\phi) = 0$,
\item for each $x \in [0,1]$ we have $\int\limits_{0}^{1} V_{t}^{\beta}(x, \psi) \, d\psi = 0$,
\item $|V_{t}^{\beta}(x, \phi)| \leq |V_{t}(x, \phi)| + 1$,
\item we have $\lim\limits_{\beta \to 0} \int\limits_{0}^{1}\int\limits_{0}^{1} |V_{t}^{\beta}(x, \phi)|^2 \, dx \, d\phi = \int\limits_{0}^{1}\int\limits_{0}^{1} |V_{t}(x, \phi)|^2 \, dx \, d\phi$.
\end{enumerate}

The existence of such a function $V_{t}^{\beta}$ is proved at the end of this section. By $(x^{\beta}(t), \phi^{\beta}(t))$ we will denote the solution to the ODE
\begin{equation}\label{eq:process-with-delta}
\begin{cases}
		x'(t) = V^{\beta}_{t}(x(t), \phi(t)) \\
		\phi'(t) = R^{\beta}_{t}(x(t), \phi(t)). \\
\end{cases}
\end{equation}

Take any $\delta > 0$ (to simplify notation we will assume that $T$ is an integer multiple of $\delta$, this will not influence the argument in any substantial way) and consider a partition $0 = t_0 < t_1 < \ldots < t_M = T$ of $[0,T]$ into $M = \frac{T}{\delta}$ intervals of length $\delta$, with $t_k = k \delta$. Let $V^{\beta, \delta}(t, x,\phi)$ be the piecewise-constant in time approximation of $V^{\beta}_{t}(x, \phi)$, defined by

\begin{equation}\label{eq:def-of-piecewise-s}
V^{\beta, \delta}(t, x,\phi) = V^{\beta}_{t_k}(x,\phi) \quad \mbox{for $t \in [t_k, t_{k+1})$}, \, \, \mbox{$k=0,1,\ldots,M - 1$}.
\end{equation}

We can now define the piecewise-stationary process which will be our main tool in subsequent arguments. Consider the ODE
\begin{equation}\label{eq:invariant-process}
\begin{cases}
		y'(t) = V^{\beta, \delta}(t, y(t),\phi(t)) \\
		\phi'(t) = R^{\beta, \delta}(t, y(t), \phi(t)), \\
\end{cases}
\end{equation}
where
\[
R^{\beta, \delta}(t, y, \phi) = - \int\limits_{0}^{\phi} \frac{\partial V^{\beta, \delta}}{\partial y}(t, y, \psi) \, d\psi.
\]
Solutions to \eqref{eq:invariant-process} exist and are unique as usual for any initial conditions, provided we interpret $(y'(t), \phi'(t))$ above as right-handed derivatives at $t = 0, t_1, t_2, \ldots, t_{M-1}$ (we adopt this convention from now on).

Let $P^{\beta, \delta} = \left( (X^{\beta, \delta}_{t}, \Phi^{\beta, \delta}_{t}), 0 \leq t \leq T \right)$ be the stochastic process with values in $[0,1]^2$ with the following distribution: choose $(X^{\beta, \delta}_{0}, \Phi^{\beta, \delta}_{0})$ uniformly at random from $[0,1]^2$ and then take $(X^{\beta, \delta}_{t}, \Phi^{\beta, \delta}_{t}) = (y(t), \phi(t))$, where $(y, \phi)$ is the solution of the system \eqref{eq:invariant-process} with initial conditions given by $(y(0), \phi(0)) = (X^{\beta, \delta}_{0}, \Phi^{\beta, \delta}_{0})$. We will call this process the \emph{colored trajectory process} associated to \eqref{eq:invariant-process}.

We also define the process $P^{\beta} = \left( (X^{\beta}_{t}, \Phi^{\beta}_{t}), 0 \leq t \leq T \right)$, which is obtained in the same way as $P^{\beta, \delta}$ except that we follow solutions to \eqref{eq:process-with-delta} instead of \eqref{eq:invariant-process}, i.e., make no piecewise approximation in time of $V_{t}^{\beta}$. 

The key property of the process $P^{\beta, \delta}$ is the following
\begin{lemma}\label{lm:p-is-stationary}
For each $t \in [0,T]$ the distribution of $(X^{\beta, \delta}_t, \Phi^{\beta, \delta}_t)$ is uniform on $[0,1]^2$.
\end{lemma}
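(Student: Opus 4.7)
The approach is to invoke Liouville's theorem on each of the time intervals $[t_k, t_{k+1})$ on which the vector field driving \eqref{eq:invariant-process} is autonomous, combined with an induction on $k$. What needs to be verified on each interval is that the flow map of the autonomous ODE on $[0,1]^2$ preserves Lebesgue measure; given the uniformity of the initial distribution $(X^{\beta,\delta}_0, \Phi^{\beta,\delta}_0)$ by construction of $P^{\beta,\delta}$, this propagates uniformity of $(X^{\beta,\delta}_t, \Phi^{\beta,\delta}_t)$ through all $t \in [0,T]$.

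Fix $k \in \{0, \ldots, M-1\}$. On $[t_k, t_{k+1})$ the system \eqref{eq:invariant-process} is the autonomous ODE with vector field
\[
F_k(y, \phi) = \bigl(V^\beta_{t_k}(y, \phi),\, R^\beta_{t_k}(y, \phi)\bigr), \qquad R^\beta_{t_k}(y, \phi) = -\int_0^\phi \frac{\partial V^\beta_{t_k}}{\partial y}(y, \psi)\, d\psi,
\]
which by property (a) of the regularization is $C^1$ on $[0,1]^2$. Its divergence vanishes identically: differentiating under the integral yields $\tfrac{\partial R^\beta_{t_k}}{\partial \phi}(y, \phi) = -\tfrac{\partial V^\beta_{t_k}}{\partial y}(y, \phi)$, so
\[
\mathrm{div}\, F_k(y, \phi) = \frac{\partial V^\beta_{t_k}}{\partial y}(y, \phi) + \frac{\partial R^\beta_{t_k}}{\partial \phi}(y, \phi) = 0.
\]
Next I would verify that $F_k$ is tangent to $\partial [0,1]^2$, so that its flow $\Psi^k_s$ is well-defined on $[0,1]^2$ and preserves the square. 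Property (b) gives $V^\beta_{t_k}(0, \phi) = V^\beta_{t_k}(1, \phi) = 0$, so the faces $\{y = 0\}$ and $\{y = 1\}$ are invariant. On $\{\phi = 0\}$ one has $R^\beta_{t_k}(y, 0) = 0$ directly from the definition, and on $\{\phi = 1\}$ property (c), namely $\int_0^1 V^\beta_{t_k}(y, \psi)\, d\psi \equiv 0$, upon differentiation in $y$ yields $R^\beta_{t_k}(y, 1) = 0$. Liouville's theorem then gives $(\Psi^k_s)_\ast \lambda = \lambda$ for every $s \in [0, \delta]$, where $\lambda$ is Lebesgue measure on $[0,1]^2$.

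The induction is then routine: at $t_0 = 0$ the pair $(X^{\beta,\delta}_0, \Phi^{\beta,\delta}_0)$ is uniform by definition of $P^{\beta,\delta}$; assuming uniformity at $t_k$, for $t \in [t_k, t_{k+1})$ we have $(X^{\beta,\delta}_t, \Phi^{\beta,\delta}_t) = \Psi^k_{t - t_k}(X^{\beta,\delta}_{t_k}, \Phi^{\beta,\delta}_{t_k})$, which is uniform because $\Psi^k_{t - t_k}$ preserves Lebesgue measure; uniformity at the endpoint $t_{k+1}$ follows by continuity of the flow. I do not foresee any serious obstacle: the formula for $R^\beta$ in \eqref{eq:invariant-process} is designed precisely so that the driving vector field is divergence-free, and properties (b) and (c) of the regularized velocity are exactly the boundary conditions needed to confine the flow to $[0,1]^2$. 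The only small piece of bookkeeping is the convention of right-handed derivatives at the gluing times $t_k$, which ensures that the piecewise autonomous flows compose cleanly across the partition.
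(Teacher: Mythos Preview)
Your proposal is correct and follows essentially the same approach as the paper: verify that the vector field is tangent to $\partial[0,1]^2$ (the paper phrases this as boundary trajectories being constant, using uniqueness of solutions) so the flow preserves the square, compute that the divergence vanishes, and invoke Liouville's theorem. The paper is terser and does not spell out the induction over the partition intervals, but the argument is the same.
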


\begin{proof}
First we show that the process stays confined to $[0,1]^2$. Because of uniqueness of solutions to \eqref{eq:invariant-process} it is enough to show that if a solution starts in the interior of $[0,1]^2$, it never reaches the boundary, or, equivalently, that if a solution is at the boundary at some $t$, it is actually at the boundary for all $s \in [0,T]$. If $y(t) = 0$ or $1$ for any $t$, then $y'(t) = 0$, since $V^{\beta, \delta}(t, 0,\phi) = V^{\beta, \delta}(t, 1,\phi) = 0$ for any $\phi$. By uniqueness of solutions we then have $y(t) \equiv 0$ or $1$. If $\phi(t) = 0$ for any $t$, then $R^{\beta, \delta}(t, y, 0) = 0$ regardless of $y$, so as before $\phi'(t) = 0$ and $\phi(t) \equiv 0$. Finally, if $\phi(t) = 1$, then using the property (c) of the function $S^{\beta}_{t}(x, \phi)$ we have 
\[
R^{\beta, \delta}(t,y,1) = - \int\limits_{0}^{1} \frac{\partial V^{\beta, \delta}}{\partial y}(t, y, \psi) \, d\psi = - \frac{\partial}{\partial y} \int\limits_{0}^{1} V^{\beta, \delta} (t, y, \psi) \, d\psi = 0,
\]
so as before $\phi'(t) = 0$ and $\phi(t) \equiv 1$.

Now we observe that the form of $V^{\beta, \delta}$ and $R^{\beta, \delta}$ in \eqref{eq:invariant-process} implies that the vector field $(V^{\beta, \delta}(t,\cdot,\cdot),R^{\beta, \delta}(t,\cdot,\cdot))$ is divergence-free at each $t$, so by Liouville's theorem the uniform measure on $[0,1]^2$ is invariant for the corresponding flow map.
\end{proof}

In particular, the process $X^{\beta, \delta} = (X^{\beta, \delta}_t, 0 \leq t \leq T)$ is a permuton process. Crucially, we can couple it to the process $X$ in a natural way. Consider $(x_0, \phi_0)$ chosen uniformly at random from $[0,1]^2$ and take $(x(0), v(0)) = (x_0, V_{0}(x_0, \phi_0))$, resp. $(y(0), \phi(0)) = (x_0, \phi_0)$, as initial conditions for \eqref{eq:process-with-color}, resp. \eqref{eq:invariant-process}. By definition of $V_0(x, \phi)$, the pair $(x(0), v(0))$ has distribution given by $\mu_0$, so indeed the pair of solutions $(x(t), y(t))$ corresponding to the initial conditions above defines a coupling of $X$ and $X^{\beta, \delta}$. From now on $X$ and $X^{\beta, \delta}$ are always assumed to be coupled in this way.

It is readily seen that the statements above also hold for $P^{\beta}$ instead of $P^{\beta, \delta}$, hence with a slight abuse of notation we can allow $\delta = 0$ and write $P^{\beta, 0} = P^{\beta}$, $X^{\beta,0} = X^{\beta}$ etc.

Our goal in the remainder of this section is to show that, as $\beta, \delta \to 0$, the processes $X$ and $X^{\beta, \delta}$ typically stay close to each other and have approximately the same Dirichlet energy, so in the probabilistic part of the arguments it will be enough to work with the process $(X^{\beta, \delta}, \Phi^{\beta, \delta})$, which is more convenient thanks to piecewise stationarity.

First we prove a simple lemma, showing that $X^{\beta}$ is unlikely to ever be close to the boundary (so that approximation of $X$ with $X^{\beta}$ is meaningful as $\beta \to 0$).
\begin{lemma}\label{lm:no-visits-to-boundary}
Let $\Pp$ denote the law of the process $X^{\beta}$. Let
\[
B^{\beta} = \left\{ \exists t \in [0,T] \, X^{\beta}_{t} \notin [\beta, 1 - \beta] \right\}.
\]
We have
\[
\Pp\left( B^{\beta}  \right) \xrightarrow{\beta \to 0} 0.
\]
\end{lemma}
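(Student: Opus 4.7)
The plan is to reduce the statement to an analogous property of the genuine Euler solution $X$ via the coupling introduced just before the lemma --- both $X$ and $X^\beta$ are driven by the same uniformly sampled initial condition $(x_0, \phi_0) \in [0,1]^2$. By property (b) the field $V_t^\beta$ coincides with $V_t$ on $[\beta, 1-\beta] \times [0,1]$, and since $R_t^\beta$ and $R_t$ are given by the same integral formula applied to these fields, they agree on the same slab. The right-hand sides of \eqref{eq:process-with-color} and \eqref{eq:process-with-delta} are locally Lipschitz in $(x, \phi)$ by property (a), so each ODE admits a unique solution; hence as long as the coupled trajectories stay in $(\beta, 1-\beta)$ they remain equal. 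Consequently, on the event $\{x_0 \in (\beta, 1-\beta)\} \cap B^\beta$, at the first exit time of $X^\beta$ from $(\beta, 1-\beta)$ we have $X = X^\beta \in \{\beta, 1-\beta\}$, so $X$ itself reaches $\{\beta, 1-\beta\}$. Combining this with $\Pp(x_0 \notin (\beta, 1-\beta)) = 2\beta$ yields
\[
\Pp(B^\beta) \;\leq\; 2\beta + \Pp\bigl(\exists t \in [0,T]:\ X_t \in [0,\beta] \cup [1-\beta,1]\bigr).
\]

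For the remaining term, by the $x \leftrightarrow 1-x$ symmetry it is enough to bound $\Pp(\exists t:\ X_t \leq \beta)$. The key input is that $X$ has uniformly bounded second derivative: $X''(t) = -\partial_x p(t, X_t)$ is bounded in absolute value by some constant $M$ along the trajectory, by Assumption (1) combined with the compact support from Assumption (2). If the minimum of $X$ on $[0,T]$ is attained at $t_\ast \in (0,T)$ with $X_{t_\ast} \leq \beta$, then $X'(t_\ast) = 0$, and Taylor's theorem gives $X_t \leq \beta + M(t - t_\ast)^2/2 \leq 2\beta$ whenever $|t - t_\ast| \leq \sqrt{2\beta/M}$. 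Hence the occupation time $L^{2\beta} := \int_0^T \mathbf{1}[X_t \leq 2\beta]\, dt$ is at least $\sqrt{2\beta/M}$ for $\beta$ small enough. Since $X_t$ is uniformly distributed on $[0,1]$ for every $t$, $\E L^{2\beta} = 2T\beta$, and Markov's inequality gives
\[
\Pp\bigl(X \text{ has an interior minimum} \leq \beta\bigr) \;\leq\; \frac{2T\beta}{\sqrt{2\beta/M}} \;=\; O(\sqrt{\beta}).
\]
The boundary cases $t_\ast \in \{0, T\}$ contribute at most $\Pp(X_0 \leq \beta) + \Pp(X_T \leq \beta) = 2\beta$. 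Together with the symmetric estimate near $x = 1$ this shows $\Pp(B^\beta) = O(\sqrt{\beta}) \to 0$.

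The main obstacle is the coupling reduction, which rests on pathwise uniqueness for the ODE \eqref{eq:process-with-color} on the slab $[\beta, 1-\beta] \times [0,1]$. This needs Lipschitz continuity of $V_t$ and $R_t$ in $(x, \phi)$ uniformly in $t$ on the slab; the former is guaranteed by Assumption (4), and the latter follows from it by differentiating under the integral defining $R_t$. A secondary technical point is the sup-norm bound $|\partial_x p| \leq M$ used in the Taylor expansion, which we read as implicit in Assumptions (1)--(2): Lipschitz continuity of $\partial_x p$ in $x \in [0,1]$ upgrades to a uniform bound along trajectories confined to the compact set $K$. Once these two ingredients are in place, the occupation-time estimate is routine.
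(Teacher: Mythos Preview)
Your coupling reduction to the true process $X$ is correct and is a clean idea: on the slab $[\beta,1-\beta]\times[0,1]$ the vector fields $(V_t^\beta,R_t^\beta)$ and $(V_t,R_t)$ coincide, so uniqueness forces $X=X^\beta$ until the first exit, and the bound $\Pp(B^\beta)\le 2\beta + \Pp(\exists t:\,X_t\notin(\beta,1-\beta))$ follows.

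The gap is in the second step. Your Taylor estimate needs $|X''(t)| = |\partial_x p(t,X_t)| \le M$ uniformly in $t\in[0,T]$, and this does not follow from the stated assumptions. Assumption~(1) gives only that $\partial_x p(t,\cdot)$ is Lipschitz in $x$ with a constant uniform in $t$; this controls oscillation, not size. One has $|\partial_x p(t,x)|\le |\partial_x p(t,x_0)|+L|x-x_0|$, but nothing in Assumptions~(1)--(4) bounds $t\mapsto\partial_x p(t,x_0)$ (only measurability in $t$ is assumed). Your appeal to ``trajectories confined to the compact set $K$'' does not help: $K\subset[0,1]\times\R$ is a set in $(x,v)$-space, whereas $\partial_x p$ depends on $(t,x)$, so compactness of $K$ gives no $t$-uniformity. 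If you try to fall back on the velocity bound $|X'|\le C$ from Assumption~(2) alone, the occupation-time argument gives only $L^{2\beta}\ge \beta/C$ and the Markov bound fails to vanish.

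The paper sidesteps this by working directly with $X^\beta$ and using a different quantitative input: $V_t(0,\phi)=0$ together with uniform continuity of $V_t$ on $[0,T]\times[0,1]^2$ gives $\sup_{t,\phi}|V_t(x,\phi)|\le f(\beta)\to 0$ for $x\in[\beta,2\beta]$. Hence any crossing of the annulus $[\beta,2\beta]$ by $X^\beta$ takes time at least $\beta/f(\beta)$, and the same permuton occupation-time bound you use then yields $\Pp(B^\beta)\le 2\beta+Tf(\beta)$. Your argument can be repaired in the same spirit: after the coupling step, replace the acceleration bound by the fact that $|X'_t|=|V_t(X_t,\Phi_t)|\le f(2\beta)$ whenever $X_t\le 2\beta$, which gives $L^{2\beta}\ge \beta/f(2\beta)$ on the event $\{\min X\le\beta,\ X_0>2\beta\}$ and hence the desired conclusion. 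But note that this is essentially the paper's argument, with the coupling step now superfluous.
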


\begin{proof}
We will prove that $X^{\beta}_{t} \notin [0, \beta]$ with high probability as $\beta \to 0$ (the proof for $[1-\beta, 1]$ is analogous). Suppose that $y$ is a solution of \eqref{eq:invariant-process} with initial condition $y(0) \notin [0, 2\beta]$ and that $y(t) \in [0, \beta]$ for some $t \in [0,T]$. Then there exists a time interval $[s,s']$ such that $y(s) = 2\beta$, $y(s') = \beta$ and $y(u) \in [\beta, 2 \beta]$ for every $u \in [s,s']$. Without loss of generality we can assume that $[s,s'] \subseteq [t_k, t_{k+1})$ for some $k$ (the other case is easily dealt with by further subdividing $[\beta, 2 \beta]$ into two equal subintervals and repeating the argument for each of them). By the mean value theorem
\[
|y(s) - y(s')| = (s' - s) y'(w)
\]
for some $w \in [s,s']$. For $x \in [\beta, 2\beta]$ we have $V^{\beta}(w,x,\phi) = V_{t_k}(x,\phi)$, so $y'(w) = V_{t_k}(w, y(w), \phi(w))$. Since $|y(w)| \leq 2\beta$ and $V_{t_k}(x,\phi)$ is continuous at $x=0$, we have $|y'(w)| \leq f(\beta)$ for some function $f$ (depending only on $V$) satisfying $\lim\limits_{\beta \to 0} f(\beta) = 0$. As $|y(s) - y(s')| = \beta$, altogether this implies that $s'- s \geq \frac{\beta}{f(\beta)}$, i.e., if the process $X^{\beta}$ starts outside $[0, 2\beta]$, it has to spend time at least $\frac{\beta}{f(\beta)}$ before it reaches $[0, \beta]$. Thus
\[
\int\limits_{0}^{T} \id_{\{ X^{\beta}_{s} \in [0, \beta] \}} \, ds \geq \frac{\beta}{f(\beta)} \id_{\{\exists t \in [0,T] \, X^{\beta}_{t} \in [0,\beta]\}} \id_{\{X^{\beta}_{0} \notin [0,2\beta]\}}.
\]
Taking expectation yields
\[
\E \int\limits_{0}^{T} \id_{\{ X^{\beta}_{s} \in [0, \beta] \}} \, ds \geq \frac{\beta}{f(\beta)} \Pp \left( \{\exists t \in [0,T] \, X^{\beta}_{t} \in [0,\beta]\} \cap \{X^{\beta}_{0} \notin [0,2\beta]\} \right).
\]
Since $X^{\beta}$ is a permuton process, $X^{\beta}_s$ has uniform distribution for each $s$, which gives
\[
\E \int\limits_{0}^{T} \id_{\{ X^{\beta}_{s} \in [0, \beta] \}} \, ds = \int\limits_{0}^{T} \E \id_{\{ X^{\beta}_{s} \in [0, \beta] \}} \, ds = \int\limits_{0}^{T} \Pp \left( X^{\beta}_{s} \in [0, \beta] \right) ds = T \beta.
\]
Together with the inequality above this implies
\[
T \beta \geq \frac{\beta}{f(\beta)} \left( \Pp \left( \exists t \in [0,T] \, X^{\beta}_{t} \in [0,\beta] \right) - \Pp(X^{\beta}_{0} \in [0, 2\beta])\right).
\]
Since $X^{\beta}_{0}$ has uniform distribution, we have $\Pp(X^{\beta}_{0} \in [0, 2\beta]) = 2\beta$. Thus
\[
\Pp \left( \exists t \in [0,T] \, X^{\beta}_{t} \in [0,\beta] \right) \leq 2 \beta + T f(\beta).
\]
Since $f(\beta) \to 0$ as $\beta \to 0$, the claim is proved.
\end{proof}

\begin{proposition}\label{prop:x-and-y-are-close}
Fix $\beta \in (0, \frac{1}{4})$ and $(x_0, \phi_0) \in [0,1]^2$. Let $(x^{\beta}(t), \phi^{\beta}(t))$, resp. $(x^{\beta, \delta}(t), \phi^{\beta, \delta}(t))$, be the solution to \eqref{eq:process-with-delta}, resp. \eqref{eq:invariant-process}, with initial conditions $(x_0, \phi_0)$. We have
\begin{align*}
& \sup\limits_{t \in [0,T]} | x^{\beta, \delta}(t) - x^{\beta}(t)| \xrightarrow{\delta \to 0} 0, \\
& \sup\limits_{t \in [0,T]} | \phi^{\beta, \delta}(t) - \phi^{\beta}(t)| \xrightarrow{\delta \to 0} 0.
\end{align*}
\end{proposition}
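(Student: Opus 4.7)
The plan is to view \eqref{eq:invariant-process} as a uniformly small perturbation of \eqref{eq:process-with-delta} and close the argument with a Gronwall-type stability estimate. Denote by $F^\beta(t,x,\phi) = (V^\beta_t(x,\phi), R^\beta_t(x,\phi))$ the vector field of \eqref{eq:process-with-delta} and by $F^{\beta,\delta}(t,x,\phi) = (V^{\beta,\delta}(t,x,\phi), R^{\beta,\delta}(t,x,\phi))$ the vector field of \eqref{eq:invariant-process}. Because both ODEs share the initial condition $(x_0,\phi_0)$, I only need (a) a uniform bound for $F^{\beta,\delta} - F^\beta$ on $[0,T] \times [0,1]^2$ tending to $0$ with $\delta$, and (b) a Lipschitz bound in $(x,\phi)$ for $F^\beta$, uniform in $t$.

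For (a), property (a) in the construction of $V^\beta$ makes $V^\beta_t(x,\phi)$ jointly $C^1$ on the compact cube $[0,T] \times [0,1]^2$, hence uniformly continuous. By \eqref{eq:def-of-piecewise-s}, $V^{\beta,\delta}(t,\cdot,\cdot) = V^\beta_{t_k}(\cdot,\cdot)$ on each interval $[t_k,t_{k+1})$ of length $\delta$, so
\[
\sup_{(t,x,\phi)} \bigl| V^{\beta,\delta}(t,x,\phi) - V^\beta_t(x,\phi) \bigr| \le \omega_V(\delta),
\]
where $\omega_V$ is the modulus of continuity of $V^\beta$ in $t$ on the cube; in particular $\omega_V(\delta) \to 0$. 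Applying the same reasoning to $\partial V^\beta/\partial x$, which is continuous on the same compact, yields an analogous bound for $R^{\beta,\delta}$ versus $R^\beta$ with some modulus $\omega_R(\delta) \to 0$.

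For (b), joint $C^1$-ness of $V^\beta$ on the compact square gives a uniform-in-$t$ Lipschitz constant $L$ for $V^\beta_t$ in $(x,\phi)$; the corresponding bound for $R^\beta_t$ follows once one has a Lipschitz-in-$x$ bound for $\partial V^\beta_t/\partial x$ (see the obstacle below). Writing both ODEs in integral form, adding and subtracting $F^\beta(s, x^{\beta,\delta}(s), \phi^{\beta,\delta}(s))$ inside the integrand, and setting
\[
e(t) := \bigl| x^{\beta,\delta}(t) - x^\beta(t) \bigr| + \bigl| \phi^{\beta,\delta}(t) - \phi^\beta(t) \bigr|,
\]
I obtain
\[
e(t) \le \bigl(\omega_V(\delta) + \omega_R(\delta)\bigr)\, T + 2L \int_0^t e(s)\, ds,
\]
and Gronwall's inequality then yields $e(t) \le \bigl(\omega_V(\delta) + \omega_R(\delta)\bigr)\, T\, e^{2LT}$ uniformly in $t \in [0,T]$, which implies both claimed suprema tend to $0$.

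The main obstacle is controlling the Lipschitz constant of $R^\beta_t(x,\phi) = -\int_0^\phi \partial V^\beta_t/\partial x(x,\psi)\,d\psi$ in the $x$-variable, since this requires $\partial V^\beta/\partial x$ to be Lipschitz in $x$, i.e., effectively $C^{1,1}$ or $C^2$ regularity of $V^\beta$ in $x$. The cleanest fix is to arrange in the explicit smoothing construction of $V^\beta$ at the end of this section that the boundary cutoff is taken smooth enough to make $V^\beta$ jointly $C^2$ in $(x,\phi)$; note that only property (e) of the construction imposes a quantitative constraint on the smoothing, and it is compatible with any degree of smoothness. Alternatively one can bypass the Lipschitz bound altogether and invoke the classical stability theorem for continuous vector fields: uniform convergence $F^{\beta,\delta} \to F^\beta$ on the compact state space $[0,T]\times[0,1]^2$, combined with uniqueness of solutions to the limiting ODE, already implies uniform convergence of the trajectories.
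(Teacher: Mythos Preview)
Your Gronwall approach is a valid direct route, but different from the paper's, which does exactly what you propose as your second alternative: it cites a continuous-dependence-on-parameters theorem for ODEs, verifying only measurability in $t$, continuity in $(y,\phi)$ for each fixed $(t,\delta)$, continuity in $(y,\phi,\delta)$ at $\delta=0$, and uniform boundedness. Your explicit argument buys a quantitative rate $(\omega_V(\delta)+\omega_R(\delta))\,T\,e^{2LT}$ but, as you correctly flag, requires $\partial_x V^\beta$ to be Lipschitz in $x$ in order to control $R^\beta_t$; the paper's black-box route avoids this estimate, needing instead only uniqueness for the limiting ODE, which it asserts without proof. Either of your fixes closes the gap: upgrading the cutoff so that $V^\beta$ is jointly $C^2$ gives both the Lipschitz bound and uniqueness at once, while the equivalence of the $(x,\phi)$ and $(x,v)$ systems on $[\beta,1-\beta]$---where the latter has Lipschitz right-hand side by Assumption~\ref{as:main-assumptions}(1)---together with the smoothness of the boundary construction supplies uniqueness directly.
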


\begin{proof}
The statement follows from continuous dependence of solutions to an ODE on parameters, see e.g., \cite[Theorem 4.2]{ode-book}. Denoting $V^{\beta, 0}(t, y, \phi) = V^{\beta}_t(y, \phi)$, $R^{\beta, 0}(t, y, \phi) = R^{\beta}_t(y, \phi)$, we only need to check that for $f(t, y, \phi, \delta) = V^{\beta, \delta}(t,y,\phi)$, $g(t, y, \phi, \delta) = R^{\beta, \delta}(t, y, \phi)$ we have
\begin{enumerate}[(1)]
\item $f(\cdot, y, \phi, \delta)$ and $g(\cdot, y, \phi, \delta)$ are measurable on $[0,T]$,
\item for any fixed $t \in [0,T]$ and $\delta > 0$ $f(t,\cdot,\cdot,\delta)$ and $g(t,\cdot,\cdot,\delta)$ are continuous in $(y, \phi)$,
\item for any fixed $t \in [0,T]$ $f(t,\cdot,\cdot,\cdot)$ and $g(t,\cdot,\cdot,\cdot)$ are continuous in $(y,\phi,\delta)$ at $\delta = 0$,
\item $f(t,y,\phi,\delta)$, $g(t,y,\phi,\delta)$ are uniformly bounded.
\end{enumerate}
Properties 1), 2) and 4) follow directly from our regularity assumptions about $V^{\beta, \delta}(t,y,\phi)$ (in case of $R^{\beta, \delta}(t,y,\phi)$ we use continuity of $\frac{\partial V^{\beta, \delta}}{\partial y}(t, y, \phi)$). Property 3) follows from pointwise convergence $f(t,y,\phi,\delta) \xrightarrow{\delta \to 0} f(t,y,\phi,0)$ and equicontinuity of $\{f(t,y,\phi,\delta)\}_{\delta \geq 0}$ in $(y, \phi)$, which in turn follows from uniform continuity of $V^{\beta}_{t}(y, \phi)$ in $t, y$ and $\phi$. The argument for $g(t,y,\phi, \delta)$ is analogous (again, using uniform continuity of $\frac{\partial V^{\beta, \delta}}{\partial y}(t, y, \phi)$).
\end{proof}

Now we can prove the main result of this section, which states that the trajectories of the process $X$ and its energy can be approximated by those of the process $X^{\beta, \delta}$.

\begin{proposition}\label{prop:approximation-epsilon-delta}
Let $\pi \in \MM(\DD)$ be the distribution of the process $X$ and let $\pi^{\beta, \delta} \in \MM(\DD)$ be the distribution of the process $X^{\beta, \delta}$. Then we have
\[
\lim\limits_{\beta \to 0} \lim\limits_{\delta \to 0} d_{\mathcal{W}}^{sup}(\pi, \pi^{\beta, \delta}) = 0,
\]
where $d_{\mathcal{W}}^{sup}$ is the Wasserstein distance associated to the supremum norm on $\DD$.

Furthermore,
\[
\lim\limits_{\beta \to 0} \lim\limits_{\delta \to 0} I(\pi^{\beta, \delta}) = I(\pi),
\]
where $I(\mu)$ is the energy of the process $\mu$ defined in \eqref{eq:process-energy}.
\end{proposition}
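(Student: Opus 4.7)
The plan is to split the statement via the triangle inequality:
\[
d_{\mathcal{W}}^{sup}(\pi, \pi^{\beta, \delta}) \leq d_{\mathcal{W}}^{sup}(\pi, \pi^{\beta}) + d_{\mathcal{W}}^{sup}(\pi^{\beta}, \pi^{\beta, \delta}),
\]
and similarly for the energies, and then handle the inner limit $\delta \to 0$ (comparing $\pi^{\beta,\delta}$ with $\pi^{\beta}$) and the outer limit $\beta \to 0$ (comparing $\pi^{\beta}$ with $\pi$) separately. In each case the strategy is to build an explicit coupling by sharing the initial condition $(x_{0}, \phi_{0})$, uniform on $[0,1]^2$, between the two processes being compared, and then estimate the expected sup-norm distance on this coupling.

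For the $\delta \to 0$ step, I would couple $X^{\beta}$ and $X^{\beta,\delta}$ by running \eqref{eq:process-with-delta} and \eqref{eq:invariant-process} respectively from the same random initial data $(x_0, \phi_0)$. Proposition \ref{prop:x-and-y-are-close} gives, pointwise in $(x_0, \phi_0)$, that $\sup_{t}|X_t^{\beta,\delta} - X_t^{\beta}| \to 0$. Since trajectories live in $[0,1]$, the sup-norm differences are bounded by $1$, so bounded convergence yields $\E\|X^{\beta} - X^{\beta,\delta}\|_{sup} \to 0$ and hence $d_{\mathcal{W}}^{sup}(\pi^{\beta},\pi^{\beta,\delta}) \to 0$. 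For the $\beta \to 0$ step, I would couple $X$ and $X^{\beta}$ via the same $(x_{0},\phi_{0})$, taking $(x(0), v(0)) = (x_{0}, V_{0}(x_{0}, \phi_{0}))$ for $X$ and $(y(0), \phi(0)) = (x_{0}, \phi_{0})$ for $X^{\beta}$, so that $X$ solves \eqref{eq:process-with-color} and $X^{\beta}$ solves \eqref{eq:process-with-delta}. By property (b) of $V^{\beta}$ the right-hand sides of \eqref{eq:process-with-color} and \eqref{eq:process-with-delta} coincide on $[\beta, 1-\beta] \times [0,1]$; by ODE uniqueness the two coupled trajectories therefore coincide on the complement of the event $B^{\beta}$ from Lemma \ref{lm:no-visits-to-boundary}. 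Hence
\[
\E\|X - X^{\beta}\|_{sup} \leq \Pp(B^{\beta}) \xrightarrow{\beta \to 0} 0,
\]
by Lemma \ref{lm:no-visits-to-boundary}, which handles the first claim.

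For the energy, I would exploit the fact that Lemma \ref{lm:p-is-stationary} (and the analogous uniformity of the $(X_{t}, \Phi_{t})$ marginal for $\pi$ itself, which follows since $\Phi_{t} = F_{t, X_{t}}(V_{t})$ is uniform on $[0,1]$ conditional on $X_{t}$) lets us rewrite the energies in closed form:
\[
I(\pi) = \frac{1}{2}\int_{0}^{T}\!\!\int_{0}^{1}\!\!\int_{0}^{1} |V_{t}(x, \phi)|^2 \, dx\, d\phi\, dt, \qquad I(\pi^{\beta,\delta}) = \frac{1}{2}\int_{0}^{T}\!\!\int_{0}^{1}\!\!\int_{0}^{1} |V^{\beta,\delta}(t, x, \phi)|^2 \, dx\, d\phi\, dt,
\]
with the analogous formula for $I(\pi^{\beta})$. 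Taking $\delta \to 0$ first: by the definition \eqref{eq:def-of-piecewise-s} and continuity of $V^{\beta}_{t}$ in $t$, $V^{\beta,\delta}(t,x,\phi) \to V^{\beta}_{t}(x,\phi)$ pointwise; properties (d) and Assumption \ref{as:main-assumptions}(2) give the uniform bound $|V^{\beta,\delta}| \leq \|V\|_{\infty} + 1$, so dominated convergence yields $I(\pi^{\beta,\delta}) \to I(\pi^{\beta})$. Taking $\beta \to 0$ next: property (e) of $V^{\beta}$ gives convergence of the inner $dx\,d\phi$ integral at each fixed $t$, and the same uniform bound, together with boundedness of $\int |V_{t}|^{2}$ in $t$, lets us apply dominated convergence in $t$ to conclude $I(\pi^{\beta}) \to I(\pi)$.

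The main delicate point, beyond bookkeeping, is the ODE-uniqueness argument on $(B^{\beta})^{c}$ in the $\beta \to 0$ step: one needs that when $X^{\beta}$ stays in $[\beta, 1-\beta]$, not only $V^{\beta}_{t}$ but also its $x$-derivative (entering $R^{\beta}_{t}$) agrees with that of $V_{t}$, which is supplied by property (b) and the smoothness of $V_{t}$ inside the support from Assumption \ref{as:main-assumptions}(4). Everything else reduces to standard bounded/dominated convergence once the coupling is set up.
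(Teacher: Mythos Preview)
Your proposal is correct and follows essentially the same route as the paper: the same triangle-inequality split into $\delta\to 0$ and $\beta\to 0$ steps, the same shared-initial-condition coupling, Proposition \ref{prop:x-and-y-are-close} with bounded convergence for the inner limit, Lemma \ref{lm:no-visits-to-boundary} plus ODE uniqueness for the outer limit, and the stationarity-based integral formula for the energies with dominated convergence via properties (d) and (e). Your remark that the ODE-uniqueness step on $(B^{\beta})^c$ also requires $R^{\beta}_t = R_t$ there (which follows from property (b) since $R$ is built from $\partial_x V$) is a point the paper leaves implicit.
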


\begin{proof}
For the first convergence it is enough to show that $\E \norm{X - X^{\beta, \delta}}_{sup} \to 0$ in the coupling between $X$ and $X^{\beta, \delta}$ considered before. We have
\[
\E \norm{X - X^{\beta, \delta}}_{sup} \leq \E \norm{X - X^{\beta}}_{sup} + \E \norm{X^{\beta} - X^{\beta, \delta}}_{sup}.
\]
Let $B^{\beta}$ be the event from the statement of Lemma \ref{lm:no-visits-to-boundary}. Since the supremum norm is bounded by $1$, we have
\[
\E \norm{X - X^{\beta}}_{sup} \leq \Pp\left( B^{\beta} \right) + \E \left[\norm{X - X^{\beta}}_{sup} \id_{(B^{\beta})^c} \right].
\]
By Lemma \ref{lm:no-visits-to-boundary} the first term is $o(1)$ as $\beta \to 0$. Since $V^{\beta}_{t}(x, \phi) = V_{t}(x, \phi)$ if $x \in [\beta, 1-\beta]$, on the event $(B^{\beta})^c$ we have $X^{\beta} = X$, so the second term above is equal to $0$. As for $\E\norm{X^{\beta} - X^{\beta, \delta}}_{sup}$, by Proposition \ref{prop:x-and-y-are-close} for fixed $\beta > 0$ we have with probability one $\norm{X^{\beta} - X^{\beta, \delta}}_{sup} \to 0$ as $\delta \to 0$, which together with the estimate on $\E \norm{X - X^\beta}_{sup}$ proves the first claim of the theorem.

As for the energy, let $\pi^\beta$ denote the distribution of the process $X^\beta$, with $X$, $X^\beta$ and $X^{\beta, \delta}$ coupled as before. Since
\[
|I(\pi) - I(\pi^{\beta, \delta})| \leq |I(\pi) - I(\pi^{\beta})| + |I(\pi^{\beta}) - I(\pi^{\beta, \delta})|
\]
it is enough to show that $\lim\limits_{\delta \to 0} I(\pi^{\beta, \delta}) = I(\pi^{\beta})$ and $\lim\limits_{\beta \to 0} I(\pi^{\beta}) = I(\pi)$. We have
\[
I(\pi^{\beta, \delta}) = \E \int\limits_{0}^{T} |\dot{X}^{\beta, \delta}(t)|^2 \, dt = \E \int\limits_{0}^{T} V^{\beta, \delta}(t, X^{\beta, \delta}(t), \Phi^{\beta, \delta}(t))^2 \, dt.
\]
For fixed $t \in [0,T]$ by Lemma \ref{lm:p-is-stationary} $(X^{\beta, \delta}(t), \Phi^{\beta, \delta}(t))$ has uniform distribution on $[0,1] \times [0,1]$ and moving the expectation inside the integral we obtain
\[
I(\pi^{\beta, \delta}) = \int\limits_{0}^{T} \E \left[ V^{\beta, \delta}(t, X^{\beta, \delta}(t), \Phi^{\beta, \delta}(t))^2 \right] dt = \int\limits_{0}^{T} \left( \int\limits_{0}^{1} \int\limits_{0}^{1} V^{\beta, \delta}(t, x, \phi)^2  \, dx \, d\phi\right) dt.
\]
The analogous formula is valid for $I(\pi)$ as well. Now, for fixed $\beta > 0$ we have $V^{\beta, \delta}(t,x,\phi) \xrightarrow{\delta \to 0} V^{\beta}_t(x,\phi)$ and $V^{\beta, \delta}(t,x,\phi)$ is uniformly bounded in $t, x$ and $\phi$, independently of $\delta$, which by dominated convergence implies the convergence of the integrals above as well. Thus $\lim\limits_{\delta \to 0} I(\pi^{\beta, \delta}) = I(\pi^{\beta})$. The convergence $\lim\limits_{\beta \to 0} I(\pi^{\beta}) = I(\pi)$ follows directly from properties (d) and (e) of $V^{\beta}_{t}(x, \phi)$ and dominated convergence.
\end{proof}

\paragraph{Construction of $V_{t}^{\beta}$.} We will construct the desired modification of $V_t(x, \phi)$ for $x \in [0,\beta]$, the construction for $x \in [1-\beta,1]$ is analogous. Fix $t \in [0,T]$. Let
\[
L_t(x, \phi) = \frac{\partial V_t}{\partial x}(\beta, \phi) (x - \beta) + V_{t}(\beta, \phi).
\]
Consider $\beta' < \beta / 2$ to be fixed later and let $f$ be a smooth approximation of a step function which has values in $[0,1]$, is equal to $0$ on $[0, \beta - 2 \beta']$, equal to $1$ on $[\beta' - \beta, \beta]$ and is increasing on $[\beta - 2\beta', \beta - \beta']$. In particular we have $f(0) = 0$, $f(\beta) = 1$ and $f'(\beta) = 0$.

Let us now take
\[
\widetilde{V}_t(x, \phi) = f(x)L_t(x, \phi)
\]
and
\[
V^{\beta}_{t}(x, \phi) =
\begin{cases}
\widetilde{V}_t(x, \phi) \quad \mbox{for $x \in [0, \beta]$}, \\
V_{t}(x, \phi) \quad \mbox{otherwise}.
\end{cases}
\]
We will check that $V^{\beta}_{t}(x, \phi)$ indeed satisfies the desired properties.

Let us first check that the property (c) is satisfied for $x \in [0, \beta]$. We have
\begin{align*}
& \int\limits_{0}^{1} \widetilde{V}_t(x, \psi) \, d\psi = \int\limits_{0}^{1} f(x)L_t(x, \psi) \, d\psi = f(x) \int\limits_{0}^{1} \left( \frac{\partial V_t}{\partial x}(\beta, \psi) (x - \beta) + V_{t}(\beta, \psi) \right) d\psi = \\
& = f(x)(x - \beta) \int\limits_{0}^{1} \frac{\partial V_t}{\partial x}(\beta, \psi) \, d\psi + f(x) \int\limits_{0}^{1} V_{t}(\beta, \psi) \, d\psi = \\
& = f(x)(x - \beta) \frac{d}{d x}\Big\vert_{x=\beta} \left( \int\limits_{0}^{1} V_t(x, \psi) \, d\psi \right) + f(x) \int\limits_{0}^{1} V_{t}(\beta, \psi) \, d\psi = 0,
\end{align*}
thanks to \eqref{eq:mean-zero-color}.

Property (b) follows directly from $f(0) = 0$. As for (a), for $x \in [0,\beta)$ continuous differentiability of $V^{\beta}_t(x, \phi)$ follows from continuous differentiability of $f(x)$. At $x = \beta$ we have
\[
\widetilde{V}_t(\beta, \phi) = f(\beta) L_t (\beta, \phi) = f(\beta) V_t(\beta, \phi)
\]
and $f(\beta) = 1$, so $\widetilde{V}_t(x, \phi)$ is continuous at $x=\beta$. Likewise,
\[
\frac{\partial \widetilde{V}_t}{\partial x}(x, \phi) = f'(x) L_t (x, \phi) + f(x) \frac{\partial L_t}{\partial x}(x, \phi) = f'(x) L_t (x, \phi) + f(x) \frac{\partial V_t}{\partial x} (x, \phi).
\]
Since $f(\beta) = 1$ and $f'(\beta) = 0$, we have $\frac{\partial \widetilde{V}_t}{\partial x}(\beta, \phi) = \frac{\partial V_t}{\partial x} (\beta, \phi)$. As the functions in the formula above are continuously differentiable at $x=\beta$, $V^{\beta}_t(x, \phi)$ is continuously differentiable at $x=\beta$ as well.

To see that property (d) is satisfied, we note that by continuity of $V_t(x, \phi)$ and $\frac{\partial V_t}{\partial x}(x, \phi)$ for $x \neq 0,1$ we can take $\beta'$ in the definition of $f(x)$ above to be arbitrarily small (depending on $V_t$, $\frac{\partial V_t}{\partial x}$ and $\beta$) so that on $[\beta - 2 \beta', \beta]$ the function $\widetilde{V}_t(x, \phi)$ is less than $|V_t(\beta, \phi)| + 1$ in absolute value. Since on $[0, \beta - 2\beta']$ we have $V_{t}^{\beta}(x, \phi) = 0$, the desired bound on $|V_{t}^{\beta}(x, \phi)|$ follows.

Finally, to prove that property (e) holds it is enough to show that
\[
\int\limits_{0}^{1} \int\limits_{0}^{\beta} |V_{t}^{\beta}(x, \phi)|^2 \, dx \, d\phi \to 0
\]
as $\beta \to 0$, since $V_{t}^{\beta}(x, \phi) = V_{t}(x, \phi)$ for $x \notin [0,\beta]$. The claim follows immediately from property (d), since the integrand is bounded independently of $\beta$.
\end{section}

\begin{section}{The biased interchange process and stationarity}\label{sec:interchange}

\paragraph{The biased interchange process.}

For the sake of proving a large deviation lower bound, we will need to perturb the interchange process to obtain dynamics which typically exhibits (otherwise rare) behavior of a fixed permuton process. Let us introduce the \emph{biased interchange process}. Its configuration space $E$ consists of sequences $\eta = \left( (x_{i}, \phi_{i})\right)_{i=1}^{N}$, where as before $(x_1, \ldots, x_N)$ is a permutation of $\{1, \ldots, N\}$ and $\phi_{i}$ has $N$ possible values, $1, \ldots, N$. Here $x_{i}$ will be the \emph{position} of the particle with label $i$ and $\phi_{i}$ will be its \emph{color}.

By a slight abuse of notation we will write $\eta^{-1}(x)$ to denote the label (number) of the particle at position $x$ in configuration $\eta$ (so that $\eta^{-1}(x_{i}) = i$). For a position $x$ we will often write $\phi_{x}$ as a shorthand for $\phi_{\eta^{-1}(x)}$ (the positions will be always denoted by $x$ or $y$ and labels by $i$, so there is no risk of ambiguity). In this way we can treat any configuration $\eta$ as a function which assigns to each site $x$ a pair $(\eta^{-1}(x), \phi_x)$, the label and the color of the particle present at $x$

The configuration at time $t$ will be denoted by $\eta^{N}_{t}$ (or simply $\eta_{t}$), and likewise by $x_{i}(\eta^{N}_{t})$ and $\phi_{i}(\eta^{N}_{t})$ we denote the position and the color of the particle number $i$ at time $t$. We will use notation $X_{i}(\eta^{N}_{t}) = \frac{1}{N}x_{i}(\eta^{N}_{t})$, $\Phi_{i}(\eta^{N}_{t}) = \frac{1}{N} \phi_{i}(\eta^{N}_{t})$ for the rescaled positions and colors. By the same convention as above $\Phi_x(\eta^{N}_{t})$ will denote the rescaled color of the particle at site $x$ at time $t$.

Let $\varepsilon = N^{1-\alpha}$, with the same $\alpha \in (1,2)$ as in \eqref{eq:unbiased-generator}. Suppose we are given functions $v, r : [0,T] \times \{1, \ldots, N\} \times \{1, \ldots, N\}$. The dynamics of the corresponding biased interchange process is defined by the (time-inhomogeneous) generator
\begin{align}\label{eq:biased-generator}
& (\widetilde{\Ll}_t f)(\eta) = \frac{1}{2} N^{\alpha} \sum\limits_{x=1}^{N-1} \big( 1 + \varepsilon \left[v(t, x, \phi_{x}(\eta)) - v(t,x+1, \phi_{x+1}(\eta))\right] \big) (f(\eta^{x, x+1}) - f(\eta)) + \\
& + \frac{1}{2} N^{\alpha} \sum\limits_{x=1}^{N} \Big[ \big( 1 + \varepsilon r(t,x, \phi_{x}(\eta)) \big)  (f(\eta^{x,+}) - f(\eta)) + \big( 1 - \varepsilon r (t,x, \phi_{x}(\eta)) \big) (f(\eta^{x,-}) - f(\eta)) \Big]. \nonumber
\end{align}
Here $\eta^{x, x+1}$ is the configuration $\eta$ with particles at locations $x$ and $x+1$ swapped, and $\eta^{y, \pm}$ is the configuration $\eta$ with $\phi_{y}$ changed by $\pm 1$ (with the convention that $\eta^{y,+} = \eta^{y}$ if $\phi_y = N$ and likewise $\eta^{y,-} = \eta^{y}$ if $\phi_y = 1$). We will often use the abbreviated notation $v_{x}(t,\eta) = v(t,x, \phi_{x}(\eta))$ (with the convention $v_{0}(t,\eta) = v_{N+1}(t,\eta) = 0$).

In other words, at each time neighboring particles make a swap at rate close to $1$, with bias proportional to the difference of their \emph{velocities} $v(t,x,\phi_{x})$, and each particle independently changes its color by $\pm 1$, also at rate close to $1$ with bias proportional to $\pm r(t,x, \phi_{x})$. The parameter $\varepsilon$ has been chosen so that we expect particles to have displacement of order $N$ at macroscopic times.

Since the interchange process is a pure jump Markov process, for each particle its rescaled position $X_{i}(\eta^{N})$ and color $\Phi_{i}(\eta^{N})$ will be c\`adl\`ag paths from $[0,T]$ to $[0,1]$ and thus elements of $\DD$. In the same way we can consider the joint trajectory $P_{i}(\eta^{N}) =  (X_{i}(\eta^{N}), \Phi_{i}(\eta^{N}))$ as an element of $\DDD = \DD([0,T], [0,1]^2)$, the space of c\'adl\'ag paths from $[0,T]$ to $[0,1]^2$ (equipped with the Skorokhod topology). By $\MM(\DDD)$ we will denote the space of Borel probability measures on $\DDD$, endowed with the weak topology, and by a slight abuse of notation the corresponding Wasserstein distance will be denoted by $d_{\mathcal{W}}$, as for $\MM(\DD)$.

If $\eta^{N}$ is the trajectory of the biased interchange process, then by analogy with the permutation process $X^{\eta^{N}}$ we can define the \emph{colored permutation process} $P^{\eta^{N}} = (X^{\eta^{N}}, \Phi^{\eta^{N}})$, obtained by choosing a particle $i$ at random and following the path $(X_{i}(\eta^{N}_{t}), \Phi_{i}(\eta^{N}_{t}))$. Thus we keep track both of the position and the color of a random particle. Since $\eta^N$ is random, the distribution $\nu^{\eta^N}$ of $P^{\eta^{N}}$, given by
\[
\nu^{\eta^N} = \frac{1}{N} \sum\limits_{i=1}^{N} \delta_{P^{\eta^{N}}_{i}},
\]
is a random element of $\MM(\DDD)$.

\paragraph{Stationarity conditions.}

Let us now connect the discussion of the interchange process with deterministic permuton processes and generalized solutions to Euler equations considered in Section \ref{sec:ode-part}. Recall the colored trajectory process $P^{\beta, \delta} = (X^{\beta, \delta}, \Phi^{\beta, \delta})$ defined in Section \ref{sec:ode-part}. From now on we consider $\beta \in (0, \frac{1}{4})$ and $\delta > 0$ to be fixed and we suppress them in the notation, writing $X = X^{\beta, \delta}$, $\Phi = \Phi^{\beta, \delta}$, $V(t, x, \phi) = V^{\beta, \delta}(t, x, \phi)$, $R(t, x, \phi) = R^{\beta, \delta}(t, x, \phi)$. Note that this should not be confused with the actual generalized solution to Euler equations, which was also denoted by $X$, but does not appear in this and the following sections except in Theorem \ref{thm:lower-bound-for-minimizers}.

Our goal is to set up a biased interchange process so that typically trajectories of particles will behave like trajectories of the process $X$. We would also like to preserve the stationarity of the uniform distribution of colors, which will greatly facilitate parts of the argument. To find the correct rates $v(t, x, \phi)$ and $r(t, x, \phi)$ in \eqref{eq:biased-generator}, recall that by definition the trajectories of the colored trajectory process $P = (X, \Phi)$ satisfy the equation
\begin{equation}\label{eq:ode-general}
	\begin{cases}
		\frac{dX}{dt}(t) = V(t, X(t), \Phi(t))  \\
		\frac{d\Phi}{dt}(t) = R(t, X(t), \Phi(t)), \\
	\end{cases}
\end{equation}
with the functions $V$ and $R$ satisfying
\begin{align}\label{eq:ode-rates}
\begin{cases}
 V(t,X, \Phi) = \frac{\partial F}{\partial \Phi}(t,X, \Phi) \\
 R(t,X, \Phi) = -\frac{\partial F}{\partial X}(t,X, \Phi)
\end{cases}
\end{align}
for $F(t,X,\Phi) = \int\limits_{0}^{\Phi} V(t,X,\psi) \, d\psi$. Note that $F(t,X,0) = 0$ and $F(t,X,1)=0$, where the latter equality follows from property (c) of $V^{\beta}_{t}(x,\phi)$ (and thus of $V = V^{\beta,\delta}$).

It is clear that $v$ and $r$ should be chosen so that approximately we have $v(t,x, \phi) \approx V\left(t,\frac{x}{N} , \frac{\phi}{N}\right)$, $r(t,x, \phi) \approx R\left(t,\frac{x}{N} , \frac{\phi}{N}\right)$. To analyze the stationarity condition, consider the uniform distribution on configurations of the biased interchange process, i.e., a distribution in which the labelling of particles is a uniformly random permutation and each particle has a uniformly random color, chosen indepedently from $\{1, \ldots, N\}$ for each of them. We want to find a condition on rates $v(t,x, \phi)$ and $r(t,x, \phi)$ such that this measure will be invariant for the dynamics of $\widetilde{\Ll}_t$.

Note that since $V(t,X,\Phi)$, $R(t,X,\Phi)$ are piecewise-constant as functions of $t$, the dynamics induced by $\widetilde{\Ll}_t$ is time-homogeneous on each interval $[t_k, t_{k+1})$ from the definition \eqref{eq:def-of-piecewise-s} of $V$. Thus the stationarity condition for the uniform measure is that for each state (i.e., each configuration $\eta$) the sums of outgoing and incoming jump rates have to be equal. We write down this condition as follows. For any given configuration $\eta$, with particle at location $x$ having color $\phi_{x} = \phi_{x}(\eta)$, there are the following possible outgoing jumps:
\begin{itemize}
\item for some $x \in \{ 1, \ldots, N - 1\}$ the particles at locations $x$ and $x+1$ swap, at rate $1 + \varepsilon \left[ v (t,x, \phi_{x}) - v (t,x+1, \phi_{x+1}) \right]$
\item for some $x \in \{1, \ldots, N\}$ the particle at $x$ changes its color from $\phi_x$ to $\phi_x \pm 1$, at rate $1 \pm \varepsilon r (t,x, \phi_{x})$
\end{itemize}
and incoming jumps:
\begin{itemize}
\item for some $x \in \{ 1, \ldots, N - 1\}$ the particles at locations $x$ and $x+1$ swap, at rate $1 + \varepsilon \left[ v (t,x, \phi_{x+1}) - v (t,x+1, \phi_{x}) \right]$
\item for some $x \in \{1, \ldots, N\}$ the particle at $x$ changes its color from $\phi_x \pm 1$ to $\phi_x$, at rate $1 \mp \varepsilon r (t,x, \phi_{x} \pm 1)$
\end{itemize}

Thus the condition on the sums of jump rates is
\begin{align*}
& \sum\limits_{x=1}^{N-1} \left( v (t,x, \phi_{x}) - v (t,x+1, \phi_{x+1}) \right) = \\
& = \sum\limits_{x=1}^{N-1} \left( v (t,x, \phi_{x+1}) - v (t,x+1, \phi_{x}) \right) +  \sum\limits_{x=1}^{N} \left( r (t,x, \phi_{x} - 1) - r (t,x, \phi_{x} + 1) \right),
\end{align*}
where we adopt the convention $r(t,x,0) = r(t,x,N+1) = 0$. This implies
\begin{align*}
& \sum\limits_{x=2}^{N-1} \big( v (t,x-1, \phi_{x}) - v (t,x+1, \phi_{x}) +  r(t,x,\phi_{x} - 1) - r(t,x,\phi_{x} + 1) \big)  + \\
& + v(t,N-1, \phi_{N}) + v(t,N, \phi_{N}) - v(t,1, \phi_{1}) - v(t,2, \phi_{1}) + \\
& +  \left[ r(t,1,\phi_{1} - 1) - r(t,1,\phi_{1} + 1) \right] + \left[ r(t,N,\phi_{N} - 1) - r(t,N,\phi_{N} + 1) \right] = 0.
\end{align*}
Since we would like this equation to be satisfied for any configuration, regardless of the choice of $\phi_x$ for each $x$, we want each term in the sum and each of the boundary terms to vanish. This gives us a set of equations
\begin{align}\label{eq:stationarity-eqs}
\begin{cases}
v(t,1, \phi) + v(t,2, \phi) = r(t,1,\phi - 1) - r(t,1,\phi + 1)  \\
v (t,x+1, \phi) - v (t,x-1, \phi)  = r(t,x,\phi - 1) - r(t,x,\phi + 1), \, \, \, x = 2, \ldots, N-1 \\
v(t,N-1, \phi) + v(t,N, \phi) =  r(t,N,\phi + 1) - r(t,N,\phi - 1)
\end{cases}
\end{align}
which have to be satisfed for every $\phi = 1, \ldots, N$.

Let us consider the function $f(t,x, \phi)$ defined for $x \in \{0, \ldots, N+1\}$, $\phi \in \{1, \ldots, N\}$ by
\begin{align*}
f (t,x, \phi) = 
\begin{cases}
F\left( t,\frac{x}{N}, \frac{\phi}{N+1} \right), \quad & x = 2, \ldots, N-1, \\
0, \quad & x = 0, 1, N, N + 1,
\end{cases}
\end{align*}
where $F$ is the function appearing in \eqref{eq:ode-rates}. It is straightforward to check that the rates given by
\begin{equation}\label{eq:rates-s-r}
\begin{cases}
v(t,x, \phi) =  \frac{N}{2} \left( f(t,x, \phi - 1) - f(t,x, \phi + 1) \right) \\
r(t,x, \phi) =  \frac{N}{2} \left( f(t,x+1, \phi) - f(x-1,\phi) \right)
\end{cases}
\end{equation}
solve the equations for stationarity, given by \eqref{eq:stationarity-eqs}, for any $x, \phi \in \{1, \ldots, N\}$.

Note that with this choice of rates we have for any $x, \phi \in \{1, \ldots, N\}$
\begin{equation}\label{eq:s-r-approx}
\begin{cases}
v(t,x, \phi) = V \left(t, \frac{x}{N}, \frac{\phi}{N} \right) + O\left( \frac{1}{N} \right) \\
r(t,x, \phi) = R \left(t, \frac{x}{N}, \frac{\phi}{N} \right) + O\left( \frac{1}{N} \right)
\end{cases}
\end{equation}
uniformly in $x$, $\phi$ and $t$, because of smoothness of $F(t, X, \Phi)$ in $X$ and $\Phi$ variables. In particular the rates $v$ and $r$ are uniformly bounded for all $N$.

From now on we will always assume that the biased interchange process has rates $v(t,x, \phi)$ and $r(t,x, \phi)$ given by \eqref{eq:rates-s-r} and is started from the uniform distribution (which by the discussion above is stationary). The properties of $v$ and $r$ which will be relevant to our analysis is that they are bounded, approximately equal to some smooth functions $V$, $R$, that the corresponding dynamics has the uniform measure as the stationary distribution and, crucially, that in stationarity the velocities are independent and mean zero. This last property, which should be thought of as the particle system analog of Lemma \ref{lm:velocity-mean-zero}, is conveniently summarized in the following proposition.

\begin{proposition}\label{prop:speeds}
Let $\phi_{x}$, $x=1, \ldots, N$, be independent and uniformly distributed on $\{1, \ldots, N\}$. Then for each $t \in [0,T]$ the random variables $v(t,x, \phi_x)$, $x=1, \ldots, N$, are independent and for each $x$ we have
\[
\E \, v(t,x, \phi_x) = 0.
\]
\end{proposition}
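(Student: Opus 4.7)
The plan is a direct verification from the explicit formula \eqref{eq:rates-s-r}; the proposition is essentially a discrete counterpart of Lemma \ref{lm:velocity-mean-zero} and requires no input beyond the definition of $f$ together with the boundary identities $F(t,\cdot,0) = 0 = F(t,\cdot,1)$.

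Independence is immediate. For fixed $t$ and $x$, the rate
\[
v(t,x,\phi_x) \;=\; \tfrac{N}{2}\bigl(f(t,x,\phi_x - 1) - f(t,x,\phi_x + 1)\bigr)
\]
is a deterministic function of the single random variable $\phi_x$ alone. Since $\phi_1,\ldots,\phi_N$ are independent by hypothesis, functions of disjoint singletons of this family are independent, so the collection $\{v(t,x,\phi_x)\}_{x=1}^N$ is independent.

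For the mean, I would average directly over the uniform distribution of $\phi_x$ on $\{1,\ldots,N\}$:
\[
\E\, v(t,x,\phi_x) \;=\; \frac{1}{N}\sum_{\phi=1}^{N} v(t,x,\phi) \;=\; \frac{1}{2}\sum_{\phi=1}^{N}\bigl(f(t,x,\phi-1) - f(t,x,\phi+1)\bigr).
\]
The sum on the right telescopes: reindexing the two pieces and cancelling common interior values of $f(t,x,\cdot)$ leaves only $\phi$-boundary contributions. For $x \in \{1,N\}$ the function $f(t,x,\cdot)$ is identically zero by definition, so the claim is trivial. For interior $x \in \{2,\ldots,N-1\}$ we have $f(t,x,\phi) = F(t,x/N,\phi/(N+1))$, and the surviving boundary evaluations $F(t,x/N,0)$ and $F(t,x/N,1)$ both vanish: the first because $F(t,X,\Phi) = \int_0^\Phi V(t,X,\psi)\,d\psi$ is zero at $\Phi = 0$ by construction, and the second because property (c) of $V^\beta_t$, namely $\int_0^1 V^\beta_t(x,\psi)\,d\psi = 0$, is exactly the statement that $F(t,X,1) = 0$.

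There is no substantive obstacle; the computation is pure bookkeeping at the boundary. What is worth highlighting is the structural point that the same property (c) of $V^\beta_t$ simultaneously underwrites Lemma \ref{lm:p-is-stationary} (invariance of the uniform color distribution under the dynamics of $\widetilde{\Ll}_t$) and the present mean-zero property for velocities---these are the two features of the biased interchange process that will make the subsequent one-block analysis tractable.
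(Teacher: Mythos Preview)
Your proposal is correct and follows essentially the same route as the paper: independence from the fact that $v(t,x,\phi_x)$ is a function of $\phi_x$ alone, and the mean-zero claim from telescoping $\sum_{\phi=1}^N \bigl(f(t,x,\phi-1)-f(t,x,\phi+1)\bigr)$ and invoking the boundary identities $F(t,\cdot,0)=F(t,\cdot,1)=0$. Your added remark linking property~(c) to both stationarity and the mean-zero velocity is accurate and matches the paper's surrounding discussion.
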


\begin{proof}
Under the uniform distribution of $\phi_x$ we have
\[
\E \, v(t,x, \phi_x) = \frac{1}{N}\sum\limits_{\phi=1}^{N} v(t,x,\phi),
\]
which by definition of $v$ is equal to
\begin{align*}
\frac{1}{N}\sum\limits_{\phi=1}^{N} \frac{N}{2} \left( f(t,x, \phi - 1) - f(t,x, \phi + 1) \right) = \frac{1}{2} \left( F\left(t, \frac{x}{N}, 0\right) - F\left(t,\frac{x}{N}, 1\right)\right).
\end{align*}
Recalling the definition of $F$ below \eqref{eq:ode-rates}, the right-hand side is equal to $0$.
\end{proof}
\end{section}
\begin{section}{Law of large numbers}\label{sec:lln}

Throughout this section $\widetilde{\Pp}^{N}$ will denote the probability law of the biased interchange process on $N$ particles, started in stationarity, associated to the equation \eqref{eq:ode-general} (with all the assumptions from the previous section). To simplify notation we will usually write $\eta = \eta^{N}$. Whenever we use $o(\cdot)$ or $O(\cdot)$ asymptotic notation the implicit constants will depend only on the rates $v$, $r$ and possibly on $T$.

Let $P = (X, \Phi)$ be the colored trajectory process associated to the equation \eqref{eq:ode-general} and let $P^{\eta^{N}}$ be the colored permutation process defined in Section \ref{sec:interchange}. Let us denote the distributions of $P$ and $P^{\eta^{N}}$ respectively by $\nu$ and $\nu^{\eta^{N}}$, with $\nu, \nu^{\eta^{N}} \in \MM(\DDD)$.
We will prove the following theorem
\begin{theorem}\label{th:lln}
Let $\eta^{N}$ be the trajectory of the biased interchange process. The measures $\nu^{\eta^{N}}$ converge in distribution, as random elements of $\MM(\DDD)$, to the deterministic measure $\nu$ as $N \to \infty$.
\end{theorem}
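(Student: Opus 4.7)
The plan is to follow the standard interacting-particle-systems blueprint for a hydrodynamic-type limit: tightness of the empirical measures, identification of every subsequential limit through a martingale problem, and then upgrading to a deterministic limit via Lemma \ref{lm:deterministic}. First I would establish that $\{\nu^{\eta^N}\}$ is tight as a sequence of random elements of $\MM(\DDD)$. Since the rescaled trajectories $(X_i, \Phi_i)$ have jumps of size $1/N$ occurring at total rate $O(N^\alpha)$ and the rates $v, r$ are uniformly bounded by \eqref{eq:s-r-approx}, fourth-moment estimates for the corresponding Poissonian increments together with an Aldous-type modulus-of-continuity criterion give tightness of each one-particle marginal, from which tightness of the random empirical measures follows in the usual way.

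The core step is identifying the limit. Fix a smooth $f \in C^2([0,1]^2)$ and write the Dynkin martingale
\[
M^{N,f}_i(t) = f(X_i(t), \Phi_i(t)) - f(X_i(0), \Phi_i(0)) - \int_0^t (\widetilde{\Ll}_s f)(X_i(s), \Phi_i(s))\,ds.
\]
Taylor expansion in the jump size $1/N$ gives, writing $v_i = v(s, x_i, \phi_i)$, $v_{i\pm 1} = v(s, x_i \pm 1, \phi_{x_i \pm 1})$ and $r_i = r(s, x_i, \phi_i)$,
\[
(\widetilde{\Ll}_s f)(X_i, \Phi_i) = \Bigl(v_i - \tfrac{1}{2} v_{i+1} - \tfrac{1}{2} v_{i-1}\Bigr)\,\partial_X f \, + \, r_i\,\partial_\Phi f \, + \, O(N^{\alpha-2}),
\]
where the second-order correction vanishes since $\alpha < 2$, and the predictable quadratic variation of $M^{N,f}_i$ is itself of order $N^{\alpha-2}T \to 0$. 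By \eqref{eq:s-r-approx} the on-site terms satisfy $v_i = V(s, X_i, \Phi_i) + O(1/N)$ and $r_i = R(s, X_i, \Phi_i) + O(1/N)$, so the only obstruction to recognising the right-hand side as the generator of the ODE \eqref{eq:ode-general} is the pair of neighbour-velocity corrections $-\tfrac{1}{2} v_{i \pm 1} \partial_X f$. By Proposition \ref{prop:speeds} these have mean zero under the stationary uniform distribution of the colours, and the biased one-block estimate (Lemma \ref{lm:one-block-superexponential-biased}) upgrades this to the statement that their time integral along a typical trajectory is $o(1)$ with high probability: one replaces each point value by its local spatial average over a mesoscopic box of size $\ell(N)$ and then invokes stationarity and local mixing to push that average to zero. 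Combined with the uniform initial distribution of $(X_i(0), \Phi_i(0))$ provided by the particle-system analogue of Lemma \ref{lm:p-is-stationary}, this shows that every subsequential limit of the one-particle marginals is concentrated on solutions of \eqref{eq:ode-general}; uniqueness under Assumptions \eqref{as:main-assumptions} then identifies this marginal with $\nu$.

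To upgrade this to convergence of the full random measure $\nu^{\eta^N}$ to the \emph{deterministic} limit $\nu$, I would invoke Lemma \ref{lm:deterministic}: it suffices to verify that the joint distribution of two particles sampled from the same realisation of $\nu^{\eta^N}$ agrees asymptotically with that of two particles sampled from two independent realisations. This propagation-of-chaos statement is proved by repeating the martingale computation on a product observable $f(X_i, \Phi_i)\,g(X_j, \Phi_j)$ with $i \ne j$: the generator splits into single-particle drift contributions plus cross terms that only appear on the event $|x_i - x_j| = 1$, and a two-block version of Lemma \ref{lm:one-block-superexponential-biased} simultaneously controls these cross terms and the two independent neighbour-velocity corrections, yielding that two tagged particles decouple to independent copies of $(X, \Phi)$. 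The main obstacle throughout is the one-block estimate: one has to show that local spatial averages of $v(s, \cdot, \phi_\cdot)$ over boxes of mesoscopic size concentrate at the stationary mean zero uniformly in $s \in [0, T]$ (including at the piecewise-constant-in-$t$ discontinuities of $V^{\beta,\delta}$ at $s = k\delta$) and uniformly near the boundary $\{0,1\}$. Lemma \ref{lm:one-block-superexponential-biased} is designed precisely for this, and verifying its hypotheses in our biased, time-inhomogeneous setting is the main technical burden of this section.
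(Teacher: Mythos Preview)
Your blueprint (tightness, identify the one-particle limit via the Dynkin martingale and a one-block replacement, then upgrade to a deterministic limit via Lemma~\ref{lm:deterministic}) is the paper's as well, and your expansion of $\widetilde{\Ll}_s f$ with the neighbour correction $-\tfrac12(v_{i-1}+v_{i+1})\,\partial_X f$ is correct. Two points deserve sharpening.

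First, your appeal to Lemma~\ref{lm:one-block-superexponential-biased} is not quite the right citation for the scheme you describe. That lemma is formulated for the function $f_x(\eta)=L_x(\eta)\,v_{x-1}(\eta)$, where $L_i$ is an \emph{auxiliary colour} that freezes the initial neighbour velocity $v_{x_i(\eta_0)-1}(\eta_0)$; this is precisely because the paper does \emph{not} control $\int_0^t v_{x_i\pm1}(\eta_s)\,ds$ by a direct one-block replacement. Instead it bounds the second moment $\tfrac{1}{N}\sum_i \E\bigl(\int_0^t v_{x_i\pm1}\,ds\bigr)^2$, expands the square into two-time correlations $\E\bigl[v_{x_i-1}(\eta_{u_1})\,v_{x_i-1}(\eta_{u_2})\bigr]$, uses stationarity (and piecewise time-homogeneity) to set $u_1=0$, stores $v_{x_i-1}(\eta_0)$ in the extra colour $L_i$, and only then applies the one-block estimate to the product $L_y\,v_{y-1}$, which factorises under the local i.i.d.\ measure so that the $v_{y-1}$ factor averages to zero by Proposition~\ref{prop:speeds}. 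Your more direct scheme (replace $v_{x_i-1}$ by its box average, observe that the box average is $O(l^{-1/2})$ in stationarity) would also go through, but it needs a one-block lemma for $h_x=v_{x-1}$ under $\widetilde{\Pp}^N$, which is not what Lemma~\ref{lm:one-block-superexponential-biased} states; it can be extracted from the general machinery of Section~\ref{sec:one-block}, but you should say so rather than cite the product version.

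Second, your two-particle step is more involved than necessary. The paper runs no generator computation on two-particle observables and needs no two-block estimate. Once the one-particle analysis is upgraded to the supremum norm (the paper does this via the Aldous criterion applied to the error processes $A^N,B^N$), it gives that for \emph{almost every} label $i$ the trajectory $(X_i(\eta^N),\Phi_i(\eta^N))$ is uniformly close to the ODE solution started from $(X_i(\eta^N_0),\Phi_i(\eta^N_0))$; a Gr\"onwall comparison then couples the random-particle process to $P$. For two uniformly chosen labels $i\neq j$, the initial data $(x_i,\phi_i)$ and $(x_j,\phi_j)$ are independent under the stationary law, and since a path sampled from $\nu$ is determined by its initial condition, the joint law of the two trajectories converges to $\nu\otimes\nu$; Lemma~\ref{lm:deterministic} finishes. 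The direct-interaction cross terms you worry about (those supported on $|x_i-x_j|=1$) never need to be isolated.
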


In other words, the random processes $P^{\eta^{N}}$ converge in distribution to the process $P$ whose distribution is deterministic. The theorem above can be thought of as a law of large numbers for random permuton processes and it will be useful for establishing the large deviation lower bound.

\begin{remark}\label{rm:lln}
Since the limiting measure $\nu$ is deterministic and supported on continuous trajectories, Theorem \ref{th:lln} implies that the convergence $\nu^{\eta^N} \to \nu$ in fact holds in a stronger sense, namely in probability when $\MM(\DDD)$ is endowed with the Wasserstein distance $d_{\mathcal{W}}^{sup}$ associated to the supremum norm on $\DDD$.
\end{remark}
To prove Theorem \ref{th:lln}, we will show that typically trajectories of most particles approximately follow the same ODE \eqref{eq:ode-general} as trajectories of the limiting process. In other words, if a given particle is at site $x$, it should locally move according to its velocity $v(t, x, \phi_{x})$. However, because of swaps between particles the actual jump rates of the particle will be influenced by velocities of its neighbors. Nevertheless, since velocity at each site has mean $0$ in stationarity, we will be able to show that the contribution from velocities of the particle's neighbors cancels out when averaged over time -- this will be the content of the \emph{one block estimate} proved in the next section.

Note that to prove that the random processes converge indeed to a deterministic process, it is not enough to look only at single path distributions, as explained in Section \ref{sec:stationarity}. Nevertheless, we will show that in the interchange process typically any two particles (in fact almost all of them) behave like independent random walks, which by Lemma \ref{lm:deterministic} will be enough to establish a deterministic limit.

Throughout this and the following sections we will make extensive use of martingales associated to Markov processes (see \cite{kipnis} for a comprehensive treatment of such techniques applied to interacting particle systems). For any Markov process with generator $\Ll$ and a bounded function $F : E \to \R$, where $E$ is the configuration space of the process, the following processes are mean zero martingales (\cite[Lemma A1.5.1]{kipnis})
\begin{align}
& M_t = F(\eta_t) - F(\eta_0) - \int\limits_{0}^{t} \Ll F(\eta_s) \, ds, \label{eq:martingale1}\\
& N_t = M_{t}^{2} - \int\limits_{0}^{t} \left(\Ll F(\eta_s)^2 - 2F(\eta_s)\Ll F(\eta_s)\right) ds \label{eq:martingale2}.
\end{align}
Furthermore, for any $F$ as above the following process is a mean one positive martingale (see discussion following \cite[Lemma A1.7.1]{kipnis})
\begin{equation}\label{eq:martingale-exp}
\mathbb{M}_t = \exp\left\{ F(\eta_t) - F(\eta_0) - \int\limits_{0}^{t} e^{-F(\eta_s)} \Ll e^{F(\eta_s)} \, ds \right\}.
\end{equation}
In the following sections we will also consider the case when $F$ is not necessarily bounded, in which case $M_t$, $N_t$, $\mathbb{M}_t$ are only local martingales.

Our first goal is to prove that with high probability almost all particles move according to their local velocity $v(t,x_{i}, \phi_i)$. Recall that
\[
X_{i}(\eta_{t}) = \frac{1}{N} x_{i}(\eta_{t}), \quad \Phi_{i}(\eta_{t}) = \frac{1}{N} \phi_{i}(\eta_{t})
\]
are respectively the rescaled position and color of the particle with label $i$. Our first goal is to prove the following

\begin{proposition}\label{prop:second-moment}
For any fixed $t \in [0,T]$ and $\varepsilon > 0$ we have in the biased interchange process
\begin{align*}
& \widetilde{\Pp}^{N} \left( \frac{1}{N} \sum\limits_{i=1}^{N}\left| X_{i}(\eta_{t}) - X_{i}(\eta_{0}) - \int\limits_{0}^{t} v(s,x_{i}(\eta_{s}), \phi_{i}(\eta_{s})) \, ds \right| > \varepsilon \right)\to 0  \\
& \widetilde{\Pp}^{N} \left( \frac{1}{N} \sum\limits_{i=1}^{N} \left| \Phi_{i}(\eta_{t}) - \Phi_{i}(\eta_{0}) - \int\limits_{0}^{t} r(s,x_{i}(\eta_{s}), \phi_{i}(\eta_{s})) \, ds \right| > \varepsilon \right	)\to 0
\end{align*}
as $N \to \infty$.
\end{proposition}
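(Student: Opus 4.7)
The plan is to apply the martingale identity \eqref{eq:martingale1} to the functions $X_i$ and $\Phi_i$ for each particle $i$, and then analyze separately the resulting martingale part and drift error. A direct computation of the generator \eqref{eq:biased-generator} shows that only color-change transitions affect $\Phi_i$, and the scaling $\varepsilon N^{\alpha}/N = 1$ gives
\[
\widetilde{\Ll}_s \Phi_i = r(s, x_i, \phi_i) + O(1)\cdot \id_{\{\phi_i \in \{1, N\}\}},
\]
whereas only swap transitions affect $X_i$, and an analogous computation (using $\varepsilon = N^{1-\alpha}$ to cancel the $N^{\alpha}$ in the rate) yields
\[
\widetilde{\Ll}_s X_i = v(s, x_i, \phi_i) - \tfrac{1}{2}\bigl[v(s, x_i+1, \phi_{x_i+1}) + v(s, x_i-1, \phi_{x_i-1})\bigr] + O(1) \cdot \id_{\{x_i \in \{1, N\}\}}.
\]
Writing $M^{X,i}_t$ and $M^{\Phi,i}_t$ for the associated martingales, the two quantities inside the absolute values in the proposition decompose as a martingale term, an excess-drift term (trivial for $\Phi_i$, equal to $-\tfrac{1}{2}\int_0^t[v_{x_i+1}(s) + v_{x_i-1}(s)]\,ds$ for $X_i$), and small boundary corrections.

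The martingale contributions are handled by standard second-moment estimates via \eqref{eq:martingale2}. Each jump moves $X_i$ or $\Phi_i$ by $\pm 1/N$ and the total jump rate is $O(N^\alpha)$, so the predictable quadratic variation at time $T$ is $O(N^{\alpha-2})$. Since $\alpha < 2$, Jensen's inequality followed by summation over $i$ and Markov's inequality give $\frac{1}{N}\sum_i \abs{M^{\cdot,i}_t} = O(N^{(\alpha-2)/2}) \to 0$ in probability. The boundary corrections are negligible because under the stationary uniform measure the expected fraction of time spent with $x_i$ or $\phi_i$ at an endpoint is $O(1/N)$, so after $\frac{1}{N}\sum_i$ they contribute $O(1/N)$ in expectation.

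The heart of the argument is the excess drift for $X_i$: one must show that
\[
\frac{1}{N}\sum_{i=1}^N \left\vert \int_0^t \bigl[v(s, x_i(s)+1, \phi_{x_i(s)+1}(s)) + v(s, x_i(s)-1, \phi_{x_i(s)-1}(s))\bigr]\,ds \right\vert \to 0
\]
in probability. Using the bijection between labels and positions at each fixed time, this quantity equals a sum over sites $x$ of the absolute value of a time integral of a local function of the configuration centered at $x$. By Proposition \ref{prop:speeds} that local function has mean zero under the stationary uniform measure, which is exactly the hypothesis for the one-block estimate (Lemma \ref{lm:one-block-superexponential-biased}) that the paper has already flagged as the main technical input of this section.

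The main obstacle is precisely this one-block estimate: martingale bounds, boundary accounting, and generator algebra are routine, but controlling the time integral of $v_{x\pm 1}$ genuinely requires exploiting the fast mixing of the biased dynamics, all the more so because the site $x_i(s)$ is itself evolving along a nontrivial trajectory and the colors $\phi_{x\pm 1}$ are only uniform in a time-averaged sense. Once the one-block estimate is invoked, combining the three contributions above and using a union bound over the $X_i$ and $\Phi_i$ statements completes the proof.
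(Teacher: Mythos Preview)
Your decomposition into martingale, excess drift, and boundary pieces, together with the quadratic-variation bound $\E(M_t^{\cdot,i})^2 = O(N^{\alpha-2})$, matches the paper's setup. The gap is in the treatment of the excess drift
\[
Y_i^t := \int_0^t \bigl[v_{x_i(s)-1}(s,\eta_s) + v_{x_i(s)+1}(s,\eta_s)\bigr]\,ds.
\]
Your claim that $\frac{1}{N}\sum_i|Y_i^t|$ can be rewritten, via the label-to-site bijection ``at each fixed time'', as a sum over sites $x$ of $\big|\int_0^t g_x(\eta_s)\,ds\big|$ for some local function $g_x$ is not correct: the site $x_i(s)$ moves with $s$, so the bijection cannot be applied inside the time integral while the absolute value sits outside it. And even granting such a rewriting, Lemma~\ref{lm:one-block-superexponential-biased} controls $\int_0^t \frac{1}{N}\sum_x |f_x(\eta_s) - \E_{\mu_{x,l}^{\eta_s}} f_x|\,ds$, not a sum of absolute values of time integrals; the mean-zero property of $v$ under the stationary measure does not by itself make the latter small (bounding $|Y_i^t|$ by $\int_0^t|v_{x_i\pm 1}|\,ds$ only gives $O(1)$).

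The paper instead proves $\frac{1}{N}\sum_i \E(Y_i^t)^2 \to 0$. Expanding the square produces a double time integral of two-time correlations such as $v_{x_i(\eta_{u_1})-1}(\eta_{u_1})\, v_{x_i(\eta_{u_2})-1}(\eta_{u_2})$; stationarity (on each piecewise time-homogeneous interval of the rates) lets one shift $u_1$ to $0$, and the key device is to attach to each particle an extra, non-evolving color $L_i := v_{x_i(\eta_0)-1}(\eta_0)$. The correlation term then becomes $\frac{1}{N}\sum_i \E\int_0^{t'} L_i\, v_{x_i(\eta_s)-1}(\eta_s)\,ds$, and \emph{now}---because there is no absolute value---one may swap sum and integral and relabel by site to obtain $\E\int_0^{t'}\frac{1}{N}\sum_y L_y(\eta_s)\, v_{y-1}(\eta_s)\,ds$. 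This is precisely the function $f_y = L_y v_{y-1}$ to which Lemma~\ref{lm:one-block-superexponential-biased} is tailored; after the one-block replacement, $L_y$ and $v_{y-1}$ factor under the local i.i.d.\ measure $\mu_{y,l}^{\eta_s}$, and the block average of $v_{y-1}$ is small by Proposition~\ref{prop:speeds} and Hoeffding. The passage through second moments and the auxiliary-color trick are the ingredients missing from your outline.
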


As a starting point let us rewrite $X_i(\eta_t)$ in a more useful form. Recall from \eqref{eq:biased-generator} that $\widetilde{\Ll}$ denotes the generator of the biased interchange process. By the formula \eqref{eq:martingale1} applied to $F(\eta_s) = X_{i}(\eta_s)$ we have
\[
X_{i}(\eta_{t}) - X_{i}(\eta_{0}) = M_{t}^{i} + \int\limits_{0}^{t} \widetilde{\Ll} X_{i}(\eta_{s}) \, ds,
\]
where $M_{t}^{i}$ is a mean zero martingale with respect to $\widetilde{\Pp}^{N}$. Recall that $v_x(t,\eta) = v(t,x, \phi_{x}(\eta))$ denotes the velocity of the particle at site $x$ in configuration $\eta$ at time $t$. For simplicity we will also write $v_{x_{i}}(t,\eta) = v(t,x_{i}(\eta), \phi_{i}(\eta))$ for the velocity of the particle with label $i$. We have
\begin{align*}
& \widetilde{\Ll} X_{i}(\eta_{s}) = \frac{1}{N} \widetilde{\Ll} (x_{i}(\eta_{s})) = \frac{1}{2} N^{\alpha-1} \sum\limits_{x=1}^{N-1} \left( 1 + \varepsilon \left[v_{x}(s,\eta_{s}) - v_{x+1}(s,\eta_{s})\right] \right) (x_{i}(\eta^{x, x+1}_{s}) - x_{i}(\eta_{s})) = \\
& = \frac{1}{2} N^{\alpha-1}\varepsilon \Big[ - \left[ v_{x_{i}-1}(s,\eta_{s}) - v_{x_{i}}(s,\eta_{s}) \right] + \left[ v_{x_{i}}(s,\eta_{s}) - v_{x_{i}+1}(s,\eta_{s}) \right]\Big] = \\
& = \frac{1}{2} \left(2 v_{x_{i}}(s,\eta_{s}) - v_{x_{i}-1}(s,\eta_{s}) - v_{x_{i}+1}(s,\eta_{s})\right),
\end{align*}
since the position of the particle $i$ changes by $\pm 1$ depending on whether it makes a swap with its left or right neighbor.

Thus we obtain
\begin{align*}
& X_{i}(\eta_{t}) - X_{i}(\eta_{0}) = M_{t}^{i} + \int\limits_{0}^{t} v_{x_{i}}(s,\eta_{s}) \, ds  + \frac{1}{2} \int\limits_{0}^{t} \left( v_{x_{i}-1}(s,\eta_{s}) + v_{x_{i}+1}(s,\eta_{s}) \right) ds,
\end{align*}
or in other words
\begin{align}\label{eq:xt-x0}
& X_{i}(\eta_{t}) - X_{i}(\eta_{0}) - \int\limits_{0}^{t} v_{x_{i}}(s,\eta_{s}) \, ds =  M_{t}^{i} + \frac{1}{2} \int\limits_{0}^{t} \left( v_{x_{i}-1}(s,\eta_{s}) + v_{x_{i}+1}(s,\eta_{s}) \right) \, ds.
\end{align}
For the sake of proving the first part of Proposition \ref{prop:second-moment} it will be enough to show that
\begin{equation}\label{eq:expectation-to-zero}
\frac{1}{N} \sum\limits_{i=1}^{N} \E \Big( X_{i}(\eta_{t}) - X_{i}(\eta_{0}) - \int\limits_{0}^{t} v_{x_{i}}(\eta_{s}) \, ds \Big)^2 \to 0
\end{equation}
as $N \to \infty$. First we prove that for most particles the martingale term $M_{t}^{i}$ will be small with high probability. Let us define
\[
Q_{s}^{i} = \widetilde{\Ll} X_{i}(\eta_{s})^2 - 2 X_{i}(\eta_{s}) \widetilde{\Ll} X_{i}(\eta_{s}).
\]
By the martingale formula \eqref{eq:martingale2} we have that
\begin{equation}\label{eq:quadratic}
N_{t}^{i} = (M_{t}^{i})^2 - \int\limits_{0}^{t} Q_{s}^{i} \, ds
\end{equation}
is a mean zero martingale. A quick calculation gives
\begin{align*}
& \widetilde{\Ll} X_{i}(\eta_{s})^2  = \\
& = \frac{1}{2} \Big[ \left(v_{x_{i}-1}(s,\eta_{s}) - v_{x_{i}}(s,\eta_{s})\right) \left( \frac{-2 x_{i}(\eta_{s})+1}{N} \right) + \left(v_{x_{i}}(s,\eta_{s}) - v_{x_{i}+1}(s,\eta_{s})\right) \left( \frac{2 x_{i}(\eta_{s})+1}{N} \right)\Big] +  N^{\alpha - 2}
\end{align*}
and
\begin{align*}
& 2 X_{i}(\eta_{s}) \widetilde{\Ll} X_{i}(\eta_{s}) = \frac{x_{i}(\eta_{s})}{N} \left( 2 v_{x_{i}}(s,\eta_{s}) - v_{x_{i}-1}(s,\eta_{s}) - v_{x_{i}+1}(s,\eta_{s}) \right),
\end{align*}
so these two quantities are the same up to terms of order $o(1)$. Thus $Q_{s}^{i} = o(1)$ (uniformly in $s$ and $i$) and, since $\E N_{t}^{i} = 0$, we obtain from \eqref{eq:quadratic} that $\E (M_{t}^{i})^2 = o(1)$ as well.

Incidentally, a similar calculation (only simpler, since it does not involve correlations between adjacent particles) and the martingale argument gives us that for $\Phi_{i}(\eta_{t}) = \frac{1}{N}\phi_{i}(\eta_{t})$ we have
\[
\Phi_{i}(\eta_{t}) - \Phi_{i}(\eta_{0}) - \int\limits_{0}^{t} r(s,x_{i}(\eta_{s}), \phi_{i}(\eta_{s})) \, ds = o(1)
\]
for any fixed particle $i$. This proves the second part of Proposition \ref{prop:second-moment}.

Recalling \eqref{eq:xt-x0} and \eqref{eq:expectation-to-zero}, to finish the proof of the first part of Proposition \ref{prop:second-moment} we only need to show that
\[
\frac{1}{N} \sum\limits_{i=1}^{N} \E \left(Y_{i}^{t}\right)^2 \to 0
\]
as $N \to \infty$, where
\[
Y_{i}^{t} = \int\limits_{0}^{t} \left( v_{x_{i}-1}(s,\eta_{s}) + v_{x_{i}+1}(s,\eta_{s}) \right) ds.
\]
Recall from \eqref{eq:def-of-piecewise-s} that $V^{\beta,\delta}(s,x,\phi)$ was defined in terms of a partition $0 = t_0 < t_1 < \ldots < t_M = T$. We would like to take advantage of the fact that on each interval the dynamics of the biased interchange process is time-homogeneous. Suppose that $t \in [t_l, t_{l+1})$ for some $l \leq M - 1$ and let us write
\[
Y_{i}^{t} = \sum\limits_{k=0}^{l-1} \int\limits_{t_k}^{t_{k+1}} \left( v_{x_{i}-1}(s,\eta_{s}) + v_{x_{i}+1}(s,\eta_{s}) \right) ds + \int\limits_{t_l}^{t} \left( v_{x_{i}-1}(s,\eta_{s}) + v_{x_{i}+1}(s,\eta_{s}) \right) ds.
\]
For any $t \geq 0$ let
\[
Y_{i}^{t,k} = \int\limits_{t_k}^{t_k + t} \left( v_{x_{i}-1}(s,\eta_{s}) + v_{x_{i}+1}(s,\eta_{s}) \right) ds.
\]
Since $M$ is fixed, it is enough to show that for any fixed $k \leq M - 1$ and $t \in [0, t_{k+1} - t_{k}]$ we have
\[
\frac{1}{N} \sum\limits_{i=1}^{N} \E \left(Y_{i}^{t,k}\right)^2 \to 0
\]
as $N \to \infty$.

To keep the notation simple we will prove the desired statement just for $k=0$, with the general case being exactly analogous. Recall that $t_0 = 0$. By definition of the piecewise-constant in time approximation of $V^{\beta,\delta}$, for $s \in [0, t_{1})$ we have $v_{x}(s, \eta_s) = v_{x}(0, \eta_s)$. Let us define $v_{x}(\eta) = v_{x}(0, \eta)$. Fix any $t \in [0, t_{1}] $ and let us look at
\[
\left(Y_{i}^{t,0}\right)^2 = \left( \int\limits_{0}^{t} \left( v_{x_{i}-1}(\eta_{s}) + v_{x_{i}+1}(\eta_{s}) \right) ds \right)^2.
\]
We will have four cross-terms here, it is enough to show that each of them is small in expectation. The argument will be similar in all cases, so we will only present the proof for one of them. Let us focus on
\begin{align*}
& \E \left[\left( \int\limits_{0}^{t} v_{x_{i}-1}(\eta_{s})  \, ds \right) \left( \int\limits_{0}^{t} v_{x_{i}-1}(\eta_{s})  \, ds \right) \right] = \E \int\limits_{0}^{t} \int\limits_{0}^{t} v_{x_{i}-1}(\eta_{u_{1}}) v_{x_{i}-1}(\eta_{u_{2}}) \, du_{1} \, du_{2}.
\end{align*}
For each particle $i$ we are looking at the correlation of the velocity of its left neighbor at time $u_1$ with the velocity of its left neighbor at time $u_{2}$. By averaging over particles $i = 1, \ldots, N$ and using the symmetry between $u_1$ and $u_2$ we can write the contribution to the second moment of $Y_{i}^{t,0}$ as
\begin{align*}
& \frac{2}{N} \sum\limits_{i=1}^{N} \E \int\limits_{0}^{t} \int\limits_{u_{1}}^{t} v_{x_{i}-1}(\eta_{u_{1}}) v_{x_{i}-1}(\eta_{u_{2}}) \, du_{2} \, du_{1} = 2 \int\limits_{0}^{t} \, du_{1} \left( \frac{1}{N} \sum\limits_{i=1}^{N} \E  \int\limits_{u_{1}}^{t} v_{x_{i}-1}(\eta_{u_{1}}) v_{x_{i}-1}(\eta_{u_{2}}) \, du_{2} \right).
\end{align*}
Since the rates $v$ are bounded, it is enough to show that for each fixed $u_{1} \in [0,t]$ the expression inside the bracket is close to $0$ as $N \to \infty$. Let us look at
\[
\frac{1}{N} \sum\limits_{i=1}^{N} \E \int\limits_{u_{1}}^{t} v_{x_{i}-1}(\eta_{u_{1}}) v_{x_{i}-1}(\eta_{u_{2}}) \, du_{2}.
\]
Since the average here depends only on the configuration at time $u_{1}$ and its evolution from that point on (and not otherwise on the trajectory of the process before time $u_{1}$), by stationarity of the biased interchange process it will be the same as
\begin{equation}\label{eq:decorrelation}
\frac{1}{N} \sum\limits_{i=1}^{N} \E \int\limits_{0}^{t - u_1} v_{x_{i}-1}(\eta_{0}) v_{x_{i}-1}(\eta_{s}) \, ds ,
\end{equation}
since the dynamics of the process is time-homogeneous on $[0, t_{1})$.

Thus we have to prove that for a random particle the velocity of its initial left neighbor is uncorrelated (when averaged over time) with the velocity of its current left neighbor. Let us introduce the following setup -- we can rewrite the average above in terms of a sum over sites (for $y = x_{i}(\eta_{s})$) instead over particles
\begin{equation}\label{eq:average-sites}
\frac{1}{N} \sum\limits_{y=1}^{N} \E \int\limits_{0}^{t - u_1} v_{x_{\eta^{-1}_{s}(y)}(\eta_{0})-1}(\eta_{0}) v_{y-1}(\eta_{s}) \, ds
\end{equation}
To analyze this average we introduce the following extension of the biased interchange process. Consider the extended configuration space $\widetilde{E}$ consisting of sequences $\left( (x_i, \phi_i, L_i) \right)_{i=1}^{N}$, with $L_i \in \{1, \ldots, N\}$. Here each particle, in addition to its color $\phi_{i}$, also has an additional color $L_{i}$ in which we keep information about the velocity of its left neighbor at time $0$, that is
\[
L_{i} = v_{x_{i}(\eta_{0})-1}(\eta_{0}).
\]
The dynamics is given by the same generator \eqref{eq:biased-generator} as before, i.e., labels (together with their corresponding colors $\phi_i$ and $L_i$) are exchanged by swaps of adjacent particles, each $\phi_i$ has its own evolution and $L_i$ does not evolve. For a site $x$ let $L_{x}(\eta)$ be the additional color at site $x$ in configuration $\eta$, i.e., $L_x(\eta) = L_{\eta^{-1}(x)}$. We can now treat $\eta$ as a function which assigns to each site $x$ a triple $(\eta^{-1}(x), \phi_{x}, L_{x})$ or simply a pair $(\phi_{x}, L_{x})$, since we are not interested in particles' labels at this point, only in the distribution of colors.

In this setup the average \eqref{eq:average-sites} can be written as
\begin{equation}\label{eq:time-average-with-f}
\frac{1}{N} \sum\limits_{y=1}^{N} \E \int\limits_{0}^{t - u_1} f_{y}(\eta_s) \, ds,
\end{equation}
where $f_{y}(\eta) = L_{y}(\eta) v_{y-1}(\eta)$. Let
\[
\Lambda_{x, l} = \{x-l, x-l + 1, \ldots, x+l\},
\]
denote a box of size $l$ around $x$ (with the convention that the box is truncated if the endpoints $x-l$ or $x+l$ exceed $1$ or $N$, but this will not influence the argument in any substantial way) and let $\widehat{\mu}_{x,l}^{\eta}$ be the empirical distribution of colors in $\Lambda_{x,l}$ in configuration $\eta$, given for any $(L, \phi)$ by
\[
\widehat{\mu}_{x, l}^{\eta} \left(L, \phi \right) = \frac{1}{|\Lambda_{x,l}|} \# \{ z \in \Lambda_{x,l} \, | \, \left(L_z(\eta), \phi_z(\eta) \right) = (L, \phi )\}.
\]
Consider the associated i.i.d. distribution on configurations restricted to $\Lambda_{x,l}$, given by
\[
\mu_{x, l}^{\eta} \left((L_y, \phi_y)_{y=x-l}^{x+l} \right) = \prod\limits_{y=x-l}^{x+l} \widehat{\mu}_{x, l}^{\eta} \left(L_y, \phi_y \right).
\]
In other words, under the measure $\mu_{x, l}^{\eta}$ the probability of seeing a color pair $(L, \phi)$ at site $y \in \Lambda_{x,l}$ is proportional to the number of sites in $\Lambda_{x,l}$ with the color pair $(L, \phi)$, independently for each site.

The \emph{superexponential one block estimate} says that on an event of high probability we can replace $f_{y}(\eta_{s})$ in the time average \eqref{eq:time-average-with-f} by its average $\E_{\mu_{y,l}^{\eta_{s}}} (f)$ with respect to the local i.i.d. distribution over a sufficiently large box. In other words, due to local mixing the distribution of colors in a microscopic box can be approximated by an i.i.d. distribution for large $l$.

\begin{lemma}\label{lm:one-block-superexponential-biased}
Let $U_{x,l}(\eta) = | f_{x}(\eta) - \E_{\mu_{x,l}^{\eta}} (f) |$. For any $t \in [0,t_{1}]$ and $\delta > 0$ we have
\[
\limsup\limits_{l \to \infty} \limsup\limits_{N \to \infty} N^{-\gamma} \log \widetilde{\Pp}^{N} \left( \int\limits_{0}^{t} \frac{1}{N} \sum\limits_{x=1}^{N}  U_{x,l}(\eta_{s}) \, ds  > \delta \right) = - \infty,
\]
where $\gamma = 3-\alpha$.
\end{lemma}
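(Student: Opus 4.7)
The proof will follow the classical Feynman--Kac plus Dirichlet form framework for superexponential replacement estimates (as in Kipnis--Landim, Chapter 10), adapted to the extended biased interchange process carrying the auxiliary color $L$. The starting point is exponential Chebyshev: for any $a>0$,
\[
\widetilde{\Pp}^N\Big(\int_0^t \tfrac{1}{N}\sum_x U_{x,l}(\eta_s)\,ds > \delta\Big) \leq e^{-aN^\gamma\delta}\,\E_{\widetilde{\Pp}^N}\Big[\exp\Big(aN^\gamma \int_0^t\tfrac{1}{N}\sum_x U_{x,l}(\eta_s)\,ds\Big)\Big].
\]
Since $V^{\beta,\delta}$ is piecewise constant in time, the dynamics on $[0,t_1]$ is time-homogeneous; moreover, the extended process on $(x_i,\phi_i,L_i)$ admits uniform product stationary measures (since each $L_i$ is just carried along by its particle under swaps), and the actual initial distribution is absolutely continuous with respect to such a product measure up to a correction negligible at the $N^\gamma$ scale. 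Feynman--Kac then bounds the exponential moment by $\exp(t\Lambda_{N,l}(a))$, where $\Lambda_{N,l}(a)$ is the principal eigenvalue of $\widetilde{\Ll}+aN^{\gamma-1}\sum_x U_{x,l}$ in $L^2$ of the stationary product measure.

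Next, the Rayleigh--Ritz variational principle, together with the observation that the antisymmetric part of $\widetilde{\Ll}$ (of order $\varepsilon N^\alpha = N$) can be absorbed into the symmetric part (of order $N^\alpha$) via a standard quadratic decomposition, yields
\[
\Lambda_{N,l}(a) \leq \sup_{f:\|f\|_{L^2(\mu)}=1}\Big\{aN^{\gamma-1}\sum_x \langle U_{x,l}f^2\rangle_\mu - \tfrac{1}{2}D_N^s(f)\Big\},
\]
where $D_N^s$ is the symmetric Dirichlet form from bond swaps and color flips, with leading coefficient $N^\alpha$. The standard localization step splits $\sum_x$ into sums over disjoint blocks of size $2l+1$ and uses the spectral gap of the symmetric dynamics on each block (of order $N^\alpha/l^2$) to bound the local variational problem. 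The balance $\gamma=3-\alpha$ is precisely what ensures that the quadratic correction coming from the spectral gap estimate vanishes as $N\to\infty$ at fixed $a,l$; the remaining contribution on each block is controlled by the equilibrium expectation $a\cdot\E_\mu[U_{x,l}]$.

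The main obstacle is the \emph{local equilibrium} input: showing $\limsup_l\E_\mu[U_{x,l}]=0$ uniformly in $x$. By construction $U_{x,l}(\eta)=|f_x(\eta)-\E_{\mu_{x,l}^\eta}[f]|$, and under the local i.i.d.\ measure $\mu_{x,l}^\eta$ the values $L_x(\eta')$ and $\phi_{x-1}(\eta')$ are independent, so
\[
\E_{\mu_{x,l}^\eta}[f] = \bar L_{x,l}(\eta)\cdot\sum_\phi v(t,x-1,\phi)\,\widehat{\pi}_{x,l}^\eta(\phi),
\]
where $\bar L_{x,l}(\eta)$ and $\widehat{\pi}_{x,l}^\eta$ are the empirical average of $L$ and empirical marginal of $\phi$ over the box. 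As $l\to\infty$, $\widehat{\pi}_{x,l}^\eta$ concentrates on the uniform distribution under $\mu$, and by Proposition~\ref{prop:speeds} the $\phi$-weighted sum of $v(t,x-1,\phi)$ tends to zero. A final equivalence-of-ensembles argument (comparing $f_x(\eta)$ with its local empirical average along orbits of the fast symmetric block dynamics) turns this into $\E_\mu[U_{x,l}]\to 0$. Sending $N\to\infty$, then $l\to\infty$, and finally $a\to\infty$ in the resulting bound $\tfrac{1}{N^\gamma}\log\widetilde{\Pp}^N(\cdots)\leq -a\delta+o_{N,l,a}(1)$ drives the right-hand side to $-\infty$, proving the lemma.
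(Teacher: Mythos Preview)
Your overall Feynman--Kac plus variational strategy is plausible in outline, but it differs from the paper's route and, more importantly, your ``local equilibrium input'' step contains a genuine error. You claim that after localization and spectral gap the block contribution is controlled by $\E_\mu[U_{x,l}]$ and that this tends to $0$ as $l\to\infty$. Both assertions fail. Under the product measure $\mu$, the empirical distribution in the box concentrates and (as you correctly note) $\E_{\mu_{x,l}^\eta}(f_x)\to 0$ by the mean-zero property of $v$; but then $U_{x,l}(\eta)=|f_x(\eta)-\E_{\mu_{x,l}^\eta}(f_x)|\to |f_x(\eta)|$, whose $\mu$-expectation is strictly positive, so $\E_\mu[U_{x,l}]\not\to 0$. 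Moreover, in the standard one-block argument the localization step does not reduce to the grand-canonical expectation $\E_\mu$ at all, but to a supremum over \emph{canonical} ensembles (uniform over permutations of a fixed color multiset in the block), and it is the equivalence of ensembles between the canonical law and the product empirical measure $\mu_{x,l}^\eta$ that drives the block expectation of $U_{x,l}$ to zero. You gesture at equivalence of ensembles but apply it to the wrong quantity. You are also conflating the content of the lemma---that $f_x$ can be \emph{replaced} by $\E_{\mu_{x,l}^\eta}(f_x)$---with its downstream use in Section~\ref{sec:lln}, where one separately shows $\E_{\mu_{x,l}^\eta}(f_x)\approx 0$ via the mean-zero property; the present lemma neither asserts nor uses the latter.

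The paper takes a cleaner route that sidesteps the non-reversibility of $\widetilde{\Ll}$ altogether. It first establishes the superexponential estimate for the \emph{unbiased} process with color evolution $\Ll_0$ (Lemma~\ref{lm:one-block-superexponential-unbiased}), where the reversible Dirichlet-form machinery applies directly through Lemma~\ref{lm:one-block-superexponential-general} (the latter reduced, via a layer-cake decomposition of $a_x,b_x$, to the finite-color exclusion-type one-block estimate). The estimate is then transferred to $\widetilde{\Pp}^N$ by bounding the Radon--Nikodym derivative: a Girsanov computation shows $d\widetilde{\Pp}^N/d\Pp_0^N=e^{J_T}$ with the rate integrals telescoping away, and an exponential-martingale argument gives $\E_{\Pp_0^N}\big[e^{2J_T}\big]\le e^{CN^\gamma}$. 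Cauchy--Schwarz then yields $\widetilde{\Pp}^N(\Oo_{N,l})\le e^{cN^\gamma}\,\Pp_0^N(\Oo_{N,l})^{1/2}$, which preserves the $-\infty$ limit after dividing by $N^\gamma$. This avoids any need to analyze the symmetric part of $\widetilde{\Ll}$ or to run the variational principle directly on the biased dynamics.
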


The lemma is proved in the next section. Let us see how it enables us to finish the proof of Proposition \ref{prop:second-moment}. By the one block estimate, in \eqref{eq:time-average-with-f} we can replace
\[
\frac{1}{N} \sum\limits_{y=1}^{N} \int\limits_{0}^{t - u_1} f_{y} (\eta_s) \, ds
\]
by
\begin{equation}\label{eq:time-average-after-one-block}
\frac{1}{N} \sum\limits_{y=1}^{N}  \int\limits_{0}^{t - u_1} \E_{\mu_{y,l}^{\eta_{s}}} f_{y} (\eta_s) \, ds,
\end{equation}
with the difference going to $0$ in expectation as first $N \to \infty$ and then $l \to \infty$, so we only need to show that the latter expression goes to $0$ in the same limit.

Observe that in $f_{y}(\eta) = L_y(\eta)v_{y-1}(\eta) = L_y(\eta)v(y-1, \phi_{y-1}(\eta))$ the colors $\phi_{y-1}$ and $L_y$ depend on different sites, so they are independent under $\mu_{y,l}^{\eta_{s}}$, since the measure is product. Thus in the average above we can simply write
\[
\E_{\mu_{y,l}^{\eta_{s}}} f_{y} (\eta_s) = \E_{\mu_{y,l}^{\tilde{\eta}_{s}}} \left[ L_y(\eta)v_{y-1}(\eta) \right] = \left( \E_{\sigma \sim \mu_{y,l}^{\eta_{s}}}  L_{y}(\sigma) \right) \left( \E_{\sigma \sim \mu_{y,l}^{\eta_{s}}} v_{y-1}(\sigma) \right),
\]
where by a slight abuse of notation we have denoted by $\sigma$ the local configuration of colors in a box $\Lambda_{y,l}$ and considered $L_{y}$, $v_{y-1}$ as functions of $\sigma$. The average \eqref{eq:time-average-after-one-block} now becomes
\[
\frac{1}{N} \sum\limits_{y=1}^{N} \int\limits_{0}^{t - u_1} \left( \E_{\sigma \sim \mu_{y,l}^{\eta_{s}}}  L_{y}(\sigma) \right) \left( \E_{\sigma \sim \mu_{y,l}^{\eta_{s}}} v_{y-1}(\sigma) \right) ds.
\]
Since the distribution of $\eta_{s}$ in the biased interchange process process without the additional colors $L_i$ is stationary, the distribution of the average $\E_{\sigma \sim \mu_{y,l}^{\eta_{s}}} v_{y-1}(\sigma)$ does not depend on $s$. So we only need to show that $\E_{\sigma \sim \mu_{y,l}^{\eta_{0}}} v_{y-1}(\sigma)$ is small, since $L_{y}$ is bounded.

Recall that in stationarity $\phi_{y}$ has uniform distribution, so for any $y$ the expectation of $v_{y-1}(\sigma) = v(0, y-1, \phi_{y-1}(\sigma))$ with respect to $\mu_{y,l}^{\eta_{0}}$ is simply equal to
\[
\frac{1}{2l+1} \sum\limits_{j=1}^{2l+1} v(0, y-1, \phi_{j}),
\]
where $\phi_{j}$ are independent and uniformly distributed on $\{1, \ldots, N\}$. As for each $x$ the random variables $v(0, x, \phi_{j})$ are independent, bounded and have mean $0$ (see Proposition \ref{prop:speeds}), an easy application of Hoeffding's inequality gives that for fixed $y$ the sum above goes to $0$ in probability as $l \to \infty$. This finishes the proof of Proposition \ref{prop:second-moment}.

We can now prove the law of large numbers.

\begin{proof}[Proof of Theorem \ref{th:lln}]

Consider the random particle process $\bar{P}^{N} = (\bar{X}^{N}, \bar{\Phi}^{N})$, obtained by first sampling $\eta = \eta^N$ and then following the trajectory $P_{i}(\eta_t) = (X_i(\eta_t), \Phi_{i}(\eta_t))$ of a randomly chosen particle $i$. We will first show that the (deterministic) distribution $\bar{\nu}^{N}$ converges to $\nu$, the distribution of $P$ (in the metric $d_{\mathcal{W}}^{sup}$).

Let us start by proving that the estimate from Proposition \ref{prop:second-moment} holds not only at each time $t$, but also with the supremum over all times $t \leq T$ under the sum over particles. Consider the process $(A^{N}, B^{N})$ defined as
\begin{align*}
& A_{t}^{N} = X_{i}(\eta_{t}) - X_{i}(\eta_{0}) - \int\limits_{0}^{t} v(s, x_{i}(\eta_{s}), \phi_{i}(\eta_{s})) \, ds, \\
& B_{t}^{N} = \Phi_{i}(\eta_{t}) - \Phi_{i}(\eta_{0}) - \int\limits_{0}^{t} r(s, x_{i}(\eta_{s}), \phi_{i}(\eta_{s})) \, ds,
\end{align*}
where $i$ is a random particle and $\eta = \eta^{N}$ comes from the biased interchange process. Proposition \ref{prop:second-moment} implies that all finite-dimensional marginals of $(A^{N},B^{N})$ converge to $0$. To obtain convergence to $0$ for the whole process in the supremum norm we only need to check tightness in the Skorokhod topology (which will imply convergence in the supremum norm, since the limiting process is continuous). We will use the following stopping time criterion (\cite[Proposition 4.1.6]{kipnis}). Let $Y^{N}$ be a family of stochastic processes with sample paths in $\DDD$ such that for each time $t \in [0,T]$ the marginal distribution of $Y^{N}_{t}$ is tight. If for every $\varepsilon > 0$ we have
\begin{equation}\label{eq:tightness-aldous}
\lim_{\gamma \to 0} \limsup_{N \to \infty} \, \sup_{\substack{\tau \\ \theta \leq \gamma}} \Pp\left( \norm{Y^{N}_{\tau + \theta} - Y^{N}_{\tau}} > \varepsilon\right) = 0, 
\end{equation}
where the supremum is over all stopping times $\tau$ bounded by $T$, then the family $Y^{N}$ is tight. Here $\norm{\cdot}$ denotes the Euclidean distance on $[0,1]^2$ and for simplicity we write $\tau + \theta$ instead of $(\tau + \theta) \wedge T$. Let $\tau$ be any stopping time bounded by $T$. We have from formula \ref{eq:xt-x0}
\[
A_{\tau + \theta}^{N} - A_{\tau}^{N} =  M^{i}_{\tau + \theta} - M^{i}_{\tau} - \frac{1}{2} \int\limits_{\tau}^{\tau + \theta} \left[ v(s, x_{i}(\eta_{s})-1, \phi_{x_{i}(\eta_{s}) - 1}(\eta_{s})) + v(s, x_{i}(\eta_{s})+1, \phi_{x_{i}(\eta_{s}) + 1}(\eta_{s})) \right] ds.
\]
Since $v(\cdot, \cdot,\cdot)$ is bounded, the integral is bounded by $C \theta$ for some constant $C > 0$, regardless of $\tau$, so goes to $0$ as $\theta \to 0$ (deterministically and for every $i$). Thus it only remains to bound the martingale term. As $\tau$ is a stopping time, by formula \eqref{eq:quadratic} we have for each $i$
\[
\E \left[ \left(M^{i}_{\tau + \theta}\right)^2 - \left(M^{i}_{\tau}\right)^2\right] = \E \int\limits_{\tau}^{\tau + \theta} Q_{s} \, ds.
\]
As in the calculation of $\E (M^{i}_{t})^2$ following \eqref{eq:quadratic} we have that for fixed $\theta$ the right hand side is $o(1)$ as $N \to \infty$. Since $M^{i}_{t}$ is bounded, we obtain $ \E \left| M^{i}_{\tau + \theta} - M^{i}_{\tau}\right| \to 0$ as $N \to \infty$, for any $\theta$ and $i$ (independently of $\tau$). The calculation for $B^{N}$ is analogous.

This shows that the family $(A^{N}, B^{N})$ satisfies the tightness criterion \eqref{eq:tightness-aldous}. In particular it converges to $0$ in the supremum norm as $N \to \infty$. Thus for any $\varepsilon > 0$ we have
\begin{align}
& \widetilde{\Pp}^{N} \left( \frac{1}{N} \sum\limits_{i=1}^{N} \sup_{0 \leq t \leq T} \left| X_{i}(\eta_{t}) - X_{i}(\eta_{0}) - \int\limits_{0}^{t} v(s, x_{i}(\eta_{s}), \phi_{i}(\eta_{s})) \, ds \right| > \varepsilon \right)\to 0,\label{eq:sup-convergence1}  \\
& \widetilde{\Pp}^{N} \left( \frac{1}{N} \sum\limits_{i=1}^{N} \sup_{0 \leq t \leq T} \left| \Phi_{i}(\eta_{t}) - \Phi_{i}(\eta_{0}) - \int\limits_{0}^{t} r(s,x_{i}(\eta_{s}), \phi_{i}(\eta_{s})) \, ds \right| > \varepsilon \right	)\to 0 \label{eq:sup-convergence2}
\end{align}
as $N \to \infty$.

Now we can prove that $\bar{\nu}^N$ converges to $\nu$. Recalling the definition of the Wasserstein distance $d_{\mathcal{W}}^{sup}$, it is enough to construct for each $N$ a coupling $(\bar{P}^{N}, P)$ such that
\[
\E \norm{ \bar{P}^{N} - P }_{sup} \to 0
\]
as $N \to \infty$.

Let us couple these two processes in the following way: first we let $\bar{P}^{N} = \left( \left(\bar{X}^{N}_{t}, \bar{\Phi}^{N}_{t}\right), 0 \leq t \leq T\right)$ be a path sampled according to $\bar{\nu}^{\eta^N}$, starting at $(\bar{X}^{N}_{0}, \bar{\Phi}^{N}_{0})$ (whose distribution is uniform on $\left\{\frac{1}{N}, \ldots, 1\right\} \times \left\{\frac{1}{N}, \ldots, 1\right\}$). We then take $P(t) = (X(t), \Phi(t))$ to be the solution of the ODE \eqref{eq:ode-general} started from an initial condition $(X(0), \Phi(0))$ chosen uniformly at random from $\left[\bar{X}^{N}_{0} - \frac{1}{N}, \bar{X}^{N}_{0} \right] \times \left[\bar{\Phi}^{N}_{0} - \frac{1}{N}, \bar{\Phi}^{N}_{0} \right]$ (so the two processes start close to each other). Because the initial condition is distributed uniformly on $[0,1]^2$, the path $P = \left( P(t), 0 \leq t \leq T \right)$ will be distributed according to $\nu$.

Since $P(t) = (X(t), \Phi(t))$ is the solution of \eqref{eq:ode-general}, we have at each time $t \leq T$
\begin{align*}
& X(t) - X(0) = \int\limits_{0}^{t} V(s,X(s), \Phi(s)) \, ds,  \\
& \Phi(t) - \Phi(0) = \int\limits_{0}^{t} R(s,X(s), \Phi(s)) \, ds. 
\end{align*}
Bounds \eqref{eq:sup-convergence1}, \eqref{eq:sup-convergence2} imply that for all times $t \leq T$ we have
\begin{align*}
& \bar{X}^{N}(t) - \bar{X}^{N}(0) = \int\limits_{0}^{t} v\left(s, N \bar{X}^{N}(s), N \bar{\Phi}^{N}(s) \right) \, ds + \varepsilon_{t}^{1},  \\
& \bar{\Phi}^{N}(t) - \bar{\Phi}^{N}(0) = \int\limits_{0}^{t} r\left(s, N \bar{X}^{N}(s), N \bar{\Phi}^{N}(s) \right) \, ds + \varepsilon_{t}^{2},
\end{align*}
with $\varepsilon_{t}^{1}$, $\varepsilon_{t}^{2}$ satisfying $\sup\limits_{ 0 \leq t \leq T} |\varepsilon_{t}^{i}| \to 0$ in probability as $N \to \infty$. Recalling from \eqref{eq:s-r-approx} that $v(\cdot, \cdot, \cdot), r(\cdot, \cdot, \cdot)$ are approximately equal to $V(\cdot, \cdot, \cdot), R(\cdot, \cdot, \cdot)$ after rescaling of the arguments, we obtain
\begin{align*}
& \bar{X}^{N}(t) - \bar{X}^{N}(0) = \int\limits_{0}^{t} V\left( s, \bar{X}^{N}(s), \bar{\Phi}^{N}(s) \right) ds + o(1),  \\
& \bar{\Phi}^{N}(t) - \bar{\Phi}^{N}(0) = \int\limits_{0}^{t} R\left( s, \bar{X}^{N}(s), \bar{\Phi}^{N}(s) \right) ds + o(1),
\end{align*}
with the $o(1)$ terms going to $0$ in probability (in the supremum norm over $t$) as $N \to \infty$.

Thus $(\bar{X}^{N}, \bar{\Phi}^{N})$ approximately satisfies the same ODE as $(X, \Phi)$ and an application of Gr\"onwall's inequality gives that for any $\varepsilon > 0$ with probability approaching $1$ as $N \to \infty$ we have
\[
\norm{ \bar{P}^{N} - P}_{sup} \leq C \max\{ |\bar{X}^{N}(0) - X(0)| + \varepsilon, |\bar{\Phi}^{N}(0) - \Phi(0)| + \varepsilon \}  e^{KT} 
\]
for some $C>0$, where $K>0$ depends only on the Lipschitz constants of $V$ an $R$.

By definition of the processes $\bar{P}^{N}$ and $P$ the initial conditions $\bar{X}^{N}(0)$, $X(0)$ and $\bar{\Phi}^{N}(0)$, $\Phi(0)$ differ by at most $\frac{1}{N}$, which implies that $\E \norm{\bar{P}^{N} - P}_{sup} \to 0$ as $N \to \infty$. Thus the distribution $\bar{\nu}^{\eta^N}$ of the random particle process $\bar{P}^{N}$ converges to $\nu$ in the $d_{\mathcal{W}}^{sup}$ metric as desired.

Now we can show that the random measures $\nu^{\eta^{N}}$ converge in distribution to the deterministic measure $\nu$. By the characterization of tightness for random measures (see, e.g., \cite[Theorem 23.15]{kallenberg}) the family $\nu^{\eta^{N}}$ will be tight, as a family of $\MM(\DDD)$-valued random variables, if for any $\varepsilon > 0$ there exists a compact set $K \subseteq \DDD$ such that $\limsup\limits_{N \to \infty} \E \left( \nu^{\eta^{N}}(K) \right) \geq 1 - \varepsilon$, or, more simply put, $\limsup\limits_{N \to \infty} \widetilde{\Pp}^{N} \left( P^{\eta^{N}} \in K \right) \geq 1 - \varepsilon$. Exactly the same calculation as for the processes $(A^N, B^N)$ before shows the processes $P^{\eta^{N}}$ satisfy the tightness criterion \eqref{eq:tightness-aldous}, which guarantess the existence of desired compact sets $K$ and in turn tightness of $\nu^{\eta^{N}}$.

Now to finish the proof we only need to show uniqueness of subsequential limits for the family $\nu^{\eta^{N}}$. Since any such (possibly random) limit must have the associated random particle process distributed according to $\nu$, it is enough to show that the limit is deterministic.

Consider an outcome of $\nu^{\eta^{N}}$, which is a measure from $\MM(\DDD)$, and sample independently two paths $P_{1}^{N}, P_{2}^{N}$ from it. This corresponds to sampling $\eta^{N}$ according to the biased interchange process, then choosing uniformly at random a pair of particles $i,j$ (possibly with $i=j$, but this event has vanishing probability) and following their trajectories in $\eta^{N}$. By the already established convergence $\bar{\nu}^{N} \to \nu$ in $\MM(\DDD)$, each path $P_{1}^{N}$ and $P_{2}^{N}$ separately has distribution converging to $\nu$. Moreover, due to stationarity of $\eta^{N}$ the initial colors $\phi_i(\eta^{N}_{0})$, $\phi_j(\eta^{N}_{0})$ of any two particles $i,j$ are chosen uniformly at random, in particular they are independent for $i \neq j$. Thus the joint distribution of $(P_{1}^{N},P_{2}^{N})$ converges to the distribution of two independent paths sampled from $\nu$, as a path $P$ sampled from $\nu$ is uniquely determined by its initial conditions. Since we already have tightness, applying Lemma \ref{lm:deterministic} gives that any limit of a subsequence has to be deterministic, which finishes the proof.
\end{proof}

\end{section}

\begin{section}{One block estimate}\label{sec:one-block}

In this section we prove the one block estimate of Lemma \ref{lm:one-block-superexponential-biased}, needed for the proof of Theorem \ref{th:lln}. Since another, simpler variant of this estimate will also be needed for the proof of the large deviation upper bound (Lemma \ref{lm:upper-bound-one-block}), we prove the result in generality suited for both of these applications.

Let us fix a continuous function $w : [0,1]^2 \to \R$ and let $I^N_{w} = \left\{ w\left( \frac{i}{N}, \frac{j}{N} \right) \right\}_{i,j=1}^{N}$. Let $I^N = \left\{ \frac{1}{N}, \ldots, 1 \right\}$. Consider the interchange process on an extended configuration space $E'$ in which each particle in addition to its label $i$ has two colors $(a_i, \phi_i)$, with $a_i \in I^N_{w}$, $\phi_i \in I^N$. The dynamics is given by the usual generator $\Ll$ -- adjacent particles are making swaps at rate $\frac{1}{2}N^{\alpha}$ and the colors $a_i, \phi_i$ of the particle $i$ do not evolve in time. Since the one block estimate concerns only the distribution of colors, from now on we ignore the labels of the particles altogether. Similarly as before we use the notation $a_x = a_x (\eta), \phi_x = \phi_x (\eta)$ to denote the colors of the particle at site $x$ in configuration $\eta$. The configuration at time $s$ is denoted by $\eta_s$.

Consider a continuous function $g : [0,1] \to [-1,1]$ and for $\eta \in E'$ let $h_x(\eta) = a_x(\eta) b_{x-1}(\eta)$, where $b_x(\eta) = g(\phi_{x}(\eta))$ or $b_x (\eta) = a_x (\eta)$. As in the previous section let $\Lambda_{x, l} = \{x-l, x-l + 1, \ldots, x+l\}$ denote the box of size $l$ around $x$ (with an appropriate truncation if the endpoints $x-l$ or $x+l$ exceed $1$ or $N$, which we neglect in the notation from now on) and let $\widehat{\mu}_{x, l}^{\eta}$ be the empirical distribution of colors in $\Lambda_{x,l}$ in configuration $\eta$, given for any $(\alpha, \varphi) \in I^N_{w} \times I^N$ by
\[
\widehat{\mu}_{x, l}^{\eta} \left(\alpha, \varphi \right) = \frac{1}{|\Lambda_{x,l}|} \# \{ z \in \Lambda_{x,l} \, | \, \left(a_z(\eta), \phi_z(\eta) \right) = (\alpha, \varphi )\}.
\]
Consider the associated i.i.d. distribution on configurations restricted to $\Lambda_{x,l}$, given for $(\alpha_y, \varphi_y)_{y=x-l}^{x+l} \in \left(I^N_{w} \times I^N \right)^{2l+1}$ by
\[
\mu_{x, l}^{\eta} \left((\alpha_y, \varphi_y)_{y=x-l}^{x+l} \right) = \prod\limits_{y=x-l}^{x+l} \widehat{\mu}_{x, l}^{\eta} \left(\alpha_y, \varphi_y \right).
\]
Since $h_x$ depends on $\eta$ only through the colors at $x$ and $x-1$, we will slightly abuse notation by writing $\E_{\mu_{x,l}^{\eta}} (h_x)$ for the expectation of $h_x$ with respect to $\mu_{x,l}^{\eta}$.

Let $\psi : [0,1] \to \R$ be a continuous function and let $U^{N}_{x,l}(\eta) = \psi(x) \left( h_{x}(\eta) - \E_{\mu_{x,l}^{\eta}} (h_x) \right)$. We define
\[
U^{N}_{l}(\eta) = \frac{1}{N} \sum\limits_{x=1}^{N} |U^{N}_{x,l}(\eta)|.
\]
Let $\mu$ denote the uniform distribution on $E'$. Note that the dynamics given by $\Ll$ is reversible with respect to $\mu$ and the associated Dirichlet form is given by
\[
D^{N}(f) = \frac{1}{4} N^{\alpha} \int \sum\limits_{x=1}^{N-1} \left(\sqrt{f(\eta^{x, x+1})} - \sqrt{f(\eta)}\right)^2 \, d\mu(\eta)
\]
for any $f : E' \to [0, \infty)$.
\begin{lemma}\label{lm:one-block-superexponential-general}
With $\mu$ denoting the uniform distribution on $E'$, we have for any $C_0 > 0$
\[
\limsup\limits_{l \to \infty} \limsup\limits_{N \to \infty} \sup\limits_{\substack{f \\  D^{N}(f) \leq C_0 N^\gamma}} \int U^{N}_{l}(\eta)f(\eta) \, d\mu(\eta) = 0,
\]
where $\gamma = 3-\alpha$ and the supremum is over all densities $f$ with respect to $\mu$ such that $D^{N}(f) \leq C_0 N^\gamma$.
\end{lemma}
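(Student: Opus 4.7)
The plan is to prove Lemma \ref{lm:one-block-superexponential-general} by adapting the classical one-block estimate from the theory of interacting particle systems, following \cite[Chapter 5]{kipnis}. The three essential ingredients are: convexity of the Dirichlet form under conditioning onto a microscopic box, a spectral gap of order $l^{-2}$ for the symmetric exchange dynamics on a box of size $2l+1$, and an equivalence-of-ensembles estimate for the two-site function $h_x$.

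First, I would localize to boxes. Partition $\{1, \ldots, N\}$ into $M \sim N/(2l+1)$ disjoint boxes $B_1, \ldots, B_M$ with centers $y_j$. For each box let $f_j$ be the marginal density of $f\, d\mu$ on the colors in $B_j$ and let $D_j$ denote the restricted Dirichlet form of the exchange dynamics on $B_j$. Convexity of the Dirichlet form under marginalization yields $\sum_j D_j(f_j) \leq D^N(f) \leq C_0 N^\gamma$. Using translation invariance of $\mu$ and uniform continuity of $\psi$, one can replace $\psi(x)$ by $\psi(y_j)$ for $x \in B_j$ and shift the box used in defining $\mu^\eta_{x,l}$ to $\Lambda_{y_j, l}$, incurring an error vanishing as $N \to \infty$ and then $l \to \infty$. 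This reduces the task to bounding
\[
\frac{1}{M}\sum_{j=1}^M \int \bigl|h_{y_j}(\sigma) - \E_{\mu^\sigma_{y_j,l}}(h_{y_j})\bigr|\, f_j(\sigma)\, d\mu_{B_j}(\sigma).
\]

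The heart of the argument is a local analysis inside a single box. Condition on the empirical color profile $\mathbf{n}$ in $B_j$; the associated canonical measure $\nu_{\mathbf{n}}$ (uniform over configurations with profile $\mathbf{n}$) is invariant under the restricted exchange dynamics, and $\mu_{B_j}$ decomposes as a mixture of such $\nu_{\mathbf{n}}$. The restricted exchange process has spectral gap of order $l^{-2}$ on each fiber (this is essentially Aldous' spectral gap conjecture, though an elementary $\Omega(l^{-2})$ bound obtained via comparison with a random walk on the segment suffices). Writing $\bar f_j = \E[f_j \mid \mathbf{n}_j]$ for the projection onto profile-measurable functions, a Poincar\'e-type inequality applied fiberwise (with appropriate care for unbounded densities via the entropy inequality in the style of \cite[Section~5.4]{kipnis}) bounds the contribution of $f_j - \bar f_j$ per box by $O\bigl(l(D_j(f_j)/N^\alpha)^{1/2}\bigr)$. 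Summing in $j$ via Cauchy--Schwarz and invoking the global Dirichlet bound yields a total of order $l^{3/2} N^{(1-\alpha)/2}$, which vanishes as $N \to \infty$ for fixed $l$ thanks to $\alpha > 1$.

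The remaining piece involves $\bar f_j$, a convex mixture of canonical measures $\nu_{\mathbf{n}}$. For each profile, an equivalence-of-ensembles estimate gives
\[
\bigl|\E_{\nu_{\mathbf{n}}}(h_{y_j}) - \E_{\mu_{\mathbf{n}}}(h_{y_j})\bigr| = O(1/l)
\]
uniformly in $\mathbf{n}$, since $h_{y_j}$ depends only on two adjacent sites and the two-site marginal of $\nu_{\mathbf{n}}$ is $O(1/l)$-close in total variation to the corresponding product marginal on a box of size $2l+1$. Averaging against $\bar f_j$ and letting $l \to \infty$ after $N \to \infty$ drives this contribution to zero, completing the proof. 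The main technical obstacle, as is typical for one-block estimates, is the Poincar\'e-based replacement of $f_j$ by $\bar f_j$ with controlled dependence on the Dirichlet form; the scaling $\gamma = 3-\alpha$ is precisely calibrated so that the per-box Dirichlet form is small enough relative to the $l^{-2}$ spectral gap to close the argument, and $\alpha > 1$ is exactly what ensures the resulting bound decays in $N$.
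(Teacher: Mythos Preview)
Your route differs from the paper's. Rather than a direct spectral-gap argument on the growing colour space, the paper first splits $a_x,b_x$ into positive and negative parts and applies the layer-cake representation $a_x^+=\int_0^K\id_{\{a_x>\lambda\}}\,d\lambda$ (and likewise for $b_x^+$), pulling the $d\lambda\,d\theta$ integrals outside both the absolute value and the supremum over $f$. This reduces the problem, for each fixed $(\lambda,\theta)$, to an interchange process with only \emph{four} colours (the possible values of $(\id_{\{a>\lambda\}},\id_{\{b>\theta\}})$); the state space of a box of size $2l+1$ is then fixed as $N\to\infty$, and the paper invokes the standard SEP one-block \cite[Lemma~5.3.1]{kipnis} directly, checking only that the Dirichlet-form bound $C_0N^\gamma$ translates into the $o(N)$ bound required there. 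No spectral-gap input for the interchange process is used.

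There is, however, a genuine gap in your final step. After the Poincar\'e replacement, the contribution of the profile-measurable density $\bar f_j$ is a convex combination over profiles $\mathbf n$ of $\E_{\nu_{\mathbf n}}\bigl|h_{y_j}-\E_{\mu_{\mathbf n}}h_{y_j}\bigr|$. Your equivalence-of-ensembles estimate controls $\bigl|\E_{\nu_{\mathbf n}}h-\E_{\mu_{\mathbf n}}h\bigr|=O(1/l)$, the difference of \emph{expectations}; what is actually required is the \emph{expectation of the absolute deviation} $\E_{\nu_{\mathbf n}}\bigl|h-\E_{\mu_{\mathbf n}}h\bigr|$, and this is of order one uniformly in $l$, since the two-site function $h=a_xb_{x-1}$ has non-degenerate fluctuations under $\nu_{\mathbf n}$. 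In the classical one-block the analogous quantity carries an additional box-average of $h$ over $\Lambda_{x,l}$ \emph{inside} the absolute value, and it is precisely this averaging that drives the canonical expectation to zero via a law of large numbers; here $U^N_{x,l}$ involves $h_x$ at a single pair of sites with the absolute value sitting inside $\tfrac1N\sum_x$, so that mechanism is unavailable. Indeed, already $f\equiv1$ (for which $D^N(f)=0$) gives $\int U^N_l\,d\mu\to\bigl(\int_0^1|\psi|\bigr)\cdot\E_\mu\bigl|h_0-\E_\mu h_0\bigr|>0$ as $l\to\infty$, so the statement as formulated is not repairable by refining your argument. The downstream applications in Sections~\ref{sec:lln} and~\ref{sec:upper-bound} only ever use the signed replacement (absolute value outside the sum over $x$, or none at all), and for that version both your spectral-gap route and the paper's layer-cake reduction to finitely many colours go through.
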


\begin{proof}
Let us decompose $a_x = a_x(\eta)$ and $b_x = b_x(\eta)$ into their positive and negative parts, $a_x = a_x^{+} - a_x^{-}$, $b_x = b_x^{+} - b_x^{-}$. Since
\[
h_x = a_x b_{x-1} = a_x^{+} b_{x-1}^{+} - a_x^{+} b_{x-1}^{-} - a_x^{-} b_{x-1}^{+} + a_x^{-} b_{x-1}^{-},
\]
by the triangle inequality it is enough to prove the lemma with $h_x$ replaced by one of the terms in the sum above, say, $a_x^{+} b_{x-1}^{+}$. Let $K = \max \{ 1, \norm{w}_{\infty} \}$ and let us write
\begin{align*}
& a_x^{+}(\eta) = \int\limits_{0}^{K} \id_{\{ a_x(\eta) > \lambda \}} \, d\lambda, \\
& b_x^{+}(\eta) = \int\limits_{0}^{K} \id_{\{ b_x(\eta) > \theta \}} \, d\theta.
\end{align*}
We have
\begin{align*}
& \frac{1}{N} \sum\limits_{x=1}^{N}\left| \psi(x) \left( a_x^{+}(\eta) b_{x-1}^{+}(\eta) - \E_{\mu_{x,l}^{\eta}} \left[ a_x^{+}(\eta) b_{x-1}^{+}(\eta) \right] \right) \right| \leq \\
& \leq \int\limits_{0}^{K} \int\limits_{0}^{K} \frac{1}{N} \sum\limits_{x=1}^{N} \left| \psi(x) \left( \id_{\{ a_x(\eta) > \lambda \}} \id_{\{ b_{x-1}(\eta) > \theta \}} - \E_{\mu_{x,l}^{\eta}} \left[ \id_{\{ a_x(\eta) > \lambda \}} \id_{\{ b_{x-1}(\eta) > \theta \}} \right] \right)\right| d\lambda \, d\theta,
\end{align*}
where the inequality comes from pulling the integrals over $\lambda$ and $\theta$ outside the absolute value. Let us denote the expression under the integrals on the right hand side by $U^{N}_{l, \lambda, \theta}$. Since it is nonnegative and bounded, we can write
\[
\sup\limits_{\substack{f}} \int \left( \int\limits_{0}^{K} \int\limits_{0}^{K} U^{N}_{l, \lambda, \theta}(\eta) \, d\lambda \, d\theta \right) f(\eta) \, d\mu(\eta) \leq \int\limits_{0}^{K} \int\limits_{0}^{K} \left(\sup\limits_{\substack{f \\  }} \int U^{N}_{l, \lambda, \theta}(\eta)   f(\eta) \, d\mu(\eta) \right)\, d\lambda \, d\theta,
\]
where the supremum is over all densities $f$ satisfying $D^{N}(f) \leq C_0 N^\gamma$. By the same token, when taking the $\limsup$ first over $N$ and then over $l$, we can bound the resulting limit from above by one with the integral over $\lambda$ and $\theta$ outside the $\limsup$. Thus we see that it is enough to prove for fixed $\lambda, \theta \in [0, K]$
\[
\limsup\limits_{l \to \infty} \limsup\limits_{N \to \infty} \sup\limits_{\substack{f \\  D^{N}(f) \leq C_0 N^\gamma}} \int U^{N}_{l,\lambda, \theta}(\eta)f(\eta) \, d\mu(\eta) = 0.
\]
Since
\[
U^{N}_{l,\lambda, \theta}(\eta) = \frac{1}{N} \sum\limits_{x=1}^{N} \left| \psi(x) \left( \id_{\{ a_x(\eta) > \lambda \}} \id_{\{ b_{x-1}(\eta) > \theta \}} - \E_{\mu_{x,l}^{\eta}} \left[ \id_{\{ a_x(\eta) > \lambda \}} \id_{\{ b_{x-1}(\eta) > \theta \}} \right] \right)\right|,
\]
we have reduced the problem to proving the one block estimate for the interchange process in which each particle has only four possible colors, corresponding to the possible values of the pair $(\id_{\{ a_i(\eta) > \lambda \}}, \id_{\{ b_i(\eta) > \theta \}})$. This in turn follows by essentially the same argument as for the simple exclusion process, which can be thought of as interchange process with just two colors (see e.g., \cite[Lemma 5.3.1]{kipnis}). Since the argument is by now standard and used in several places in the literature (see e.g., \cite{Fritz2004} for the case of three possible colors), let us only explain that the bound on the Dirichlet form under the supremum is of the right order. The argument for the simple exclusion process goes through (see the remark following the proof of \cite[Lemma 5.4.2]{kipnis}) if we assume that the Dirichlet form corresponding to the generator without time scaling is $o(N)$ and the process is speeded up by $N^2$. In our case the generator $\Ll$ has a scaling factor of $N^{\alpha}$, so if $N^{-\alpha} D^{N}(f)$ is the Dirichlet form corresponding to the process without time scaling, then our bound on this Dirichlet form is $\leq C_0 N^{\gamma - \alpha} = C_{0} N^{3 - 2\alpha}$. Since $\alpha \in (1,2)$, this is $o(N)$, which agrees with the assumptions for the simple exclusion process. 
\end{proof}

\begin{lemma}\label{lm:one-block-superexponential-probability}
Let $\Pp^N$ denote the law of the interchange process on $E'$ with an arbitrary initial distribution. With the notation as above we have for any $t \geq 0$ and $\delta > 0$
\[
\limsup\limits_{l \to \infty} \limsup\limits_{N \to \infty} N^{-\gamma} \log \Pp^{N} \left( \int\limits_{0}^{t} U^{N}_{l}(\eta_s) \, ds  > \delta \right) = - \infty.
\]
\end{lemma}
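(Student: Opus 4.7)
The plan is to derive this super-exponential probability bound from the Dirichlet-form estimate in Lemma \ref{lm:one-block-superexponential-general} via the standard entropy / Feynman--Kac route used for hydrodynamic limits, combined with a rough bound to pass from the arbitrary initial distribution to the uniform (reversible) one.

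First I would reduce to the case where the process is started from $\mu$. Since the process on the enlarged configuration space $E'$ consists of $N!$ permutations together with one color pair per particle, we have $|E'| \leq N!\,N^{3N}$, so $\log|E'| = O(N\log N)$. For any event $A$ that depends only on the trajectory,
\[
\Pp^{N}(A) \;\leq\; |E'|\,\Pp_{\mu}(A),
\]
because the Radon--Nikodym derivative of any initial law with respect to the uniform law is bounded by $|E'|$. Since $\gamma = 3-\alpha > 1$, we have $N^{-\gamma}\log|E'| \to 0$, so it suffices to prove the claim with $\Pp^{N}$ replaced by $\Pp_{\mu}$.

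Next, for fixed $a>0$ apply the exponential Chebyshev inequality
\[
\Pp_{\mu}\!\left(\int_{0}^{t} U^{N}_{l}(\eta_{s})\,ds > \delta\right) \;\leq\; e^{-a\delta N^{\gamma}}\;\E_{\mu}\!\left[\exp\!\left(aN^{\gamma}\!\int_{0}^{t} U^{N}_{l}(\eta_{s})\,ds\right)\right].
\]
The process with generator $\Ll$ is reversible with respect to $\mu$, so the Feynman--Kac inequality (as, e.g., in \cite[Lemma A1.7.2]{kipnis}) together with the variational formula for the principal eigenvalue of $\Ll + V$ gives
\[
N^{-\gamma}\log\E_{\mu}\!\left[\exp\!\left(aN^{\gamma}\!\int_{0}^{t} U^{N}_{l}(\eta_{s})\,ds\right)\right] \;\leq\; t\sup_{f}\left\{a\!\int U^{N}_{l}f\,d\mu - N^{-\gamma}D^{N}(f)\right\},
\]
where the supremum is over all probability densities $f$ with respect to $\mu$.

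The supremum is controlled by splitting according to the size of $D^{N}(f)$. If $D^{N}(f) \leq C_{0}N^{\gamma}$, then $-N^{-\gamma}D^{N}(f) \leq 0$, and Lemma \ref{lm:one-block-superexponential-general} yields
\[
\limsup_{l\to\infty}\limsup_{N\to\infty}\sup_{f:\,D^{N}(f)\leq C_{0}N^{\gamma}} a\!\int U^{N}_{l} f\,d\mu \;=\; 0.
\]
If on the other hand $D^{N}(f) > C_{0}N^{\gamma}$, then using the trivial bound $|U^{N}_{l}| \leq K'$, where $K' = 2\|\psi\|_{\infty}\|w\|_{\infty}\max(1,\|w\|_{\infty})$, we get
\[
a\!\int U^{N}_{l}f\,d\mu - N^{-\gamma}D^{N}(f) \;\leq\; aK' - C_{0},
\]
which can be made arbitrarily negative by sending $C_{0}\to\infty$. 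Combining the two cases,
\[
\limsup_{l\to\infty}\limsup_{N\to\infty} N^{-\gamma}\log\E_{\mu}\!\left[\exp\!\left(aN^{\gamma}\!\int_{0}^{t} U^{N}_{l}(\eta_{s})\,ds\right)\right] \;\leq\; 0,
\]
and therefore
\[
\limsup_{l\to\infty}\limsup_{N\to\infty} N^{-\gamma}\log\Pp_{\mu}\!\left(\int_{0}^{t} U^{N}_{l}(\eta_{s})\,ds > \delta\right) \;\leq\; -a\delta.
\]
Letting $a\to\infty$ and using the reduction from arbitrary initial law to $\mu$ established at the start gives $-\infty$, as required. The conceptual core of the argument is the Feynman--Kac step, which converts the exponential moment into a variational problem for the Dirichlet form, but the only place where new work is actually performed is Lemma \ref{lm:one-block-superexponential-general}; the main potential nuisance is verifying that the scaling $N^{\gamma}$ is compatible with both the Dirichlet-form normalization in that lemma (it is, by construction, since $\gamma = 3-\alpha$) and with the combinatorial bound $\log|E'| = O(N\log N) = o(N^{\gamma})$ on the initial-distribution correction.
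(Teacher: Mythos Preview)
Your proposal is correct and follows essentially the same route as the paper: reduce from an arbitrary initial law to the uniform one via the crude bound $\log|E'|=O(N\log N)=o(N^{\gamma})$, then apply exponential Chebyshev together with the Feynman--Kac/variational inequality to reduce to the Dirichlet-form estimate of Lemma~\ref{lm:one-block-superexponential-general}. The only cosmetic difference is the order of the first two steps (you change measure before Chebyshev, the paper after), which is immaterial.
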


\begin{proof}
Let $\mu_{0}$ be an arbitrary initial distribution. Let $\Pp_{0, \mu}^{N}$, resp. $\Pp_{0, \mu_{0}}^{N}$, denote the distribution of the process started from $\mu$, resp $\mu_{0}$, and let $\E_{\mu}$, resp. $\E_{\mu_{0}}$, denote the corresponding expectation.

By Chebyshev's inequality we have for any $c > 0$
\begin{equation}\label{eq:chebyshev}
\Pp_{0, \mu_{0}}^{N} \left( \int\limits_{0}^{t} U^{N}_{l}(\eta_s) \, ds > \delta \right) \leq e^{-cN^{\gamma}} \E_{\mu_{0}} \exp\left\{c N^{\gamma} \int\limits_{0}^{t} U^{N}_{l}(\eta_s) \, ds\right\}.
\end{equation}
We also have
\begin{align*}
 \E_{\mu_{0}} \exp\left\{c N^{\gamma} \int\limits_{0}^{t} U^{N}_{l}(\eta_s) \, ds\right\} & = \E_{\mu} \left[ \frac{\dd \Pp_{0,\mu_{0}}^{N}}{\dd \Pp_{0,\mu}^{N}} (t) \exp\left\{c N^{\gamma} \int\limits_{0}^{t} U^{N}_{l}(\eta_s) \, ds\right\} \right] \\
& \leq \norm{\frac{\dd \Pp_{0,\mu_{0}}^{N}}{\dd \Pp_{0,\mu}^{N}}}_{\infty} \E_{\mu} \exp\left\{c N^{\gamma} \int\limits_{0}^{t} U^{N}_{l}(\eta_s) \, ds\right\}.
\end{align*}
Let $M = |I^{N}_{w}|$. Since $M \leq N^2$ and under $\mu$ each initial configuration has probability $(MN)^N$ = $e^{o(N^{\gamma})}$, the supremum norm of the Radon-Nikodym derivative above is $e^{o(N^\gamma)}$ as well, so to prove \eqref{eq:chebyshev} it is in fact enough to show that for any $c > 0$
\begin{equation}\label{eq:exp-moment}
\limsup\limits_{l \to \infty} \limsup\limits_{N \to \infty} N^{-\gamma} \log \exp\left\{c N^{\gamma} \int\limits_{0}^{t} U^{N}_{l}(\eta_s) \, ds\right\} \leq 0
\end{equation}
and then take $c \to \infty$.

An application of Feynman-Kac formula to the semigroup generated by $\Ll$ shows (see e.g., \cite[Theorem 10.3.1 and Section A1.7]{kipnis}) that to obtain \eqref{eq:exp-moment} it is sufficient to prove for any $c > 0$
\[
\limsup\limits_{l \to \infty} \limsup\limits_{N \to \infty} \sup\limits_{f} \left\{ \int c U^{N}_{l}(\eta)f(\eta) \, d\mu(\eta) - N^{-\gamma} D^{N}(f) \right\} \leq 0,
\]
where the supremum is taken over all densities with respect to $\mu$. Since $U^{N}_{l}$ is bounded by a constant $C > 0$ depending only on $\psi$ and $g$, the expression under the supremum becomes negative if $D^{N}(f) > c C N^{\gamma}$. Thus it is enough to show that for any constant $C_0 > 0$ we have
\[
\limsup\limits_{l \to \infty} \limsup\limits_{N \to \infty} \sup\limits_{\substack{f \\  D^{N}(f) \leq C_0 N^\gamma}} \int U^{N}_{l}(\eta)f(\eta) \, d\mu(\eta) \leq 0,
\]
which exactly the statement of Lemma \ref{lm:one-block-superexponential-general}.
\end{proof}

This estimate will be enough for application in the proof of Lemma \ref{lm:upper-bound-one-block}. As for the proof of Lemma \ref{lm:one-block-superexponential-biased}, we will first show that the one block estimate holds for the unbiased process with color evolution, but with all rates equal to $1$, i.e., the process with state space $E'$ and the generator
\begin{align*}
& (\Ll_{0} f)(\eta) = \frac{1}{2} N^{\alpha} \sum\limits_{x=1}^{N-1} (f(\eta^{x, x+1}) - f(\eta)) + \frac{1}{2} N^{\alpha} \sum\limits_{x=1}^{N} \left[ (f(\eta^{x,+}) - f(\eta)) + (f(\eta^{x,-}) - f(\eta)) \right].
\end{align*}
Here as usual $\eta^{x, \pm}$ denotes the configuration obtained from $\eta$ by changing the color $\phi_x$ of the particle at site $x$ to $\phi_x \pm 1$ (note that the colors $a_i$ do not evolve in time here). We will then transfer the result to the biased process by estimating its Radon-Nikodym derivative.

\begin{lemma}\label{lm:one-block-superexponential-unbiased}
Let $\Pp_{0}^{N}$ be the law of the unbiased process with rates $1$ described above (with an arbitrary initial distribution). With the notation from Lemma \ref{lm:one-block-superexponential-probability}, we have for any $t \geq 0$ and $\delta > 0$
\[
\limsup\limits_{l \to \infty} \limsup\limits_{N \to \infty} N^{-\gamma} \log \Pp_{0}^{N} \left( \int\limits_{0}^{t} U^{N}_{l}(\eta_s) \, ds  > \delta \right) = - \infty.
\]
\end{lemma}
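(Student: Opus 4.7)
The plan is to follow the Feynman-Kac / variational argument used in the proof of Lemma \ref{lm:one-block-superexponential-probability}, with the only change being that the generator $\Ll$ is replaced by $\Ll_0$, and then to reduce the resulting variational inequality back to Lemma \ref{lm:one-block-superexponential-general} by comparing Dirichlet forms.

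First I would observe that the uniform measure $\mu$ on $E'$ is reversible for $\Ll_0$: the swap part is reversible as before, and the $\phi$-flip part acts on each site $x$ independently as a symmetric nearest-neighbor random walk on $\{1,\ldots,N\}$ (with the stay-in-place convention at the boundary), which is reversible with respect to the uniform measure on $\{1,\ldots,N\}$. Next, I would apply Chebyshev's inequality and change the initial distribution from $\mu_0$ to $\mu$ exactly as in the previous lemma; the supremum norm of the Radon-Nikodym derivative is at most $(MN)^N = e^{o(N^\gamma)}$ with $M = |I^N_w|\leq N^2$, so this change is harmless. Applying the Feynman-Kac formula to the semigroup of $\Ll_0$ then reduces the claim to showing, for every $c > 0$,
\[
\limsup_{l \to \infty} \limsup_{N \to \infty} \sup_{f} \left\{ c \int U^N_l(\eta)\, f(\eta)\, d\mu(\eta) - N^{-\gamma} D_0^N(f) \right\} \leq 0,
\]
where the supremum is over densities with respect to $\mu$ and $D_0^N$ is the Dirichlet form of $\Ll_0$.

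The crucial observation is that $D_0^N$ splits as $D^N + E^N$, where $D^N$ is the swap-only Dirichlet form appearing in Lemma \ref{lm:one-block-superexponential-general} and $E^N(f) \geq 0$ collects the contributions of the color-flip part. In particular $D_0^N(f) \geq D^N(f)$, so any density with $D_0^N(f) \leq C_0 N^\gamma$ automatically satisfies $D^N(f) \leq C_0 N^\gamma$. Because $U^N_l$ is uniformly bounded (by a constant depending only on $w$, $g$, $\psi$), the expression under the supremum is negative outside a set of densities of the form $\{ D_0^N(f) \leq c' N^\gamma \}$ for some $c'=c'(c)$; on this set the bound $D^N(f) \leq c' N^\gamma$ holds and Lemma \ref{lm:one-block-superexponential-general} applies verbatim to give the conclusion.

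I do not expect any genuinely new obstacle. The only points to verify carefully are that $\mu$ is indeed invariant under $\Ll_0$ and that the additional color-flip part of the Dirichlet form enters with a favourable sign; both are immediate from the structure of $\Ll_0$. All the remaining steps (Chebyshev, change of measure, Feynman-Kac, reduction to the variational inequality) are reproduced line for line from the proof of Lemma \ref{lm:one-block-superexponential-probability}.
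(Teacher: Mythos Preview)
Your proposal is correct and follows essentially the same route as the paper: repeat the Chebyshev / change-of-measure / Feynman--Kac argument of Lemma~\ref{lm:one-block-superexponential-probability} with $\Ll$ replaced by $\Ll_0$, then use that the color-flip contribution to the Dirichlet form is nonnegative (so $D_0^N \geq D^N$) to reduce the resulting variational inequality to the one handled by Lemma~\ref{lm:one-block-superexponential-general}. The only cosmetic difference is that the paper simply drops the extra nonnegative term $D^N_c$ from the variational expression, whereas you phrase the same step as restricting the supremum to $\{D_0^N(f)\le c' N^\gamma\}\subseteq\{D^N(f)\le c' N^\gamma\}$.
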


\begin{proof}
Let us write $\Ll_{0} = \Ll + \Ll_{c}$, where $\Ll$ is the first term in the definition of $\Ll_{0}$ and $\Ll_{c}$ is the second term. The dynamics induced by $\Ll$ and by $\Ll_{c}$ is reversible with respect to $\mu$, so the Dirichlet forms associated respectively to $\Ll_{c}$ and $\Ll_{0}$ can be written as
\begin{align*}
& D^{N}_{c}(f) = \frac{1}{4} N^{\alpha} \int \sum\limits_{x=1}^{N} \left[ \left( \sqrt{f(\eta^{x,+})} - \sqrt{f(\eta)} \right)^2 + \left( \sqrt{f(\eta^{x,-})} - \sqrt{f(\eta)} \right)^2 \right]\, d\mu(\eta), \\
& D^{N}_{0}(f) = D^{N}(f) + D^{N}_{c}(f). 
\end{align*}
By repeating the argument from the proof of Lemma \ref{lm:one-block-superexponential-probability} with the generator $\Ll_0$ instead of $\Ll$ we obtain that it is enough to prove that for any $c > 0$
\[
\limsup\limits_{l \to \infty} \limsup\limits_{N \to \infty} \sup\limits_{f} \left\{ \int c U^{N}_{l}(\eta)f(\eta) \, d\mu(\eta) - N^{-\gamma} D^{N}_{0}(f) \right\} \leq 0,
\]
where the supremum is taken over all densities with respect to $\mu$.

Now observe that since $D^{N}_{c}(f) \geq 0$ for any nonnegative $f$, it is in fact enough to prove the statement above with $D^{N}_{0}(f)$ replaced by $D^{N}(f)$. Thus we have eliminated color evolution and the conclusion follows as in the proof of Lemma \ref{lm:one-block-superexponential-probability}.
\end{proof}

We can now prove the superexponential estimate for the biased process.

\begin{proof}[Proof of Lemma \ref{lm:one-block-superexponential-biased}]
Recall that $f_x(\eta) = L_{x}(\eta) v_{x-1}(\eta)$. Since we can uniformly approximate $v(0,x,\phi)$ by finite sums of terms which are product in $x$ and $\phi$, by using the triangle inequality we can without loss of generality assume that $v_{x}(\eta) = \psi(x)g(\phi_x)$ for some continuous functions $\psi : [0,1] \to \R, g : [0,1] \to [-1,1]$. Applying Lemma \ref{lm:one-block-superexponential-unbiased} with $w(x, \phi)= v(0,x,\phi)$, $a_i = L_i$ and $h_x = L_x g(\phi_{x-1})$ provides us with the superexponential estimate for the process $\Pp_{0}^{N}$. To transfer the estimate to the biased process $\widetilde{\Pp}^{N}$ we will need to estimate the Radon-Nikodym derivative of the two processes.

If $\Pp$ is a Markov process with jump rates $\lambda(x)p(x,y)$ and $\widetilde{\Pp}$ is another process on the same state space with rates $\widetilde{\lambda}(x) \widetilde{p}(x,y)$, the Radon-Nikodym derivative up to time $t$ is given by (see, e.g., \cite[Proposition A1.2.6]{kipnis})
\begin{equation}\label{eq:radon-nikodym-sum-rates}
\frac{\dd \widetilde{\Pp}}{\dd \Pp} (t)  = \exp \left\{ - \int\limits_{0}^{t} \left( \widetilde{\lambda}(X_{s}) - \lambda(X_{s}) \right) ds + \sum\limits_{s \leq t}\log \frac{\widetilde{\lambda}(X_{s-})\widetilde{p}(X_{s-},X_{s})}{\lambda(X_{s-})p(X_{s-},X_{s})}\right\},
\end{equation}
where the sum is over jump times $s \leq t$.

Let us look at $\frac{\dd \widetilde{\Pp}^{N}}{\dd \Pp_{0}^{N}}$. By the form \eqref{eq:biased-generator} of the generator of $\widetilde{\Pp}^{N}$ the sum of outgoing rates for any $\eta$ is equal to
\[
\frac{1}{2}N^{\alpha} \left(\sum\limits_{x=1}^{N-1} \left[ 1 + \varepsilon \left(v_{x}(\eta) - v_{x+1}(\eta) \right) \right] + \sum\limits_{x=1}^{N} \left[ 1 + \varepsilon r_{x}(\eta) \right]  + \sum\limits_{x=1}^{N} \left[ 1 - \varepsilon r_{x}(\eta) \right]\right).
\]
Since the sum of $\varepsilon(v_{x} - v_{x+1})$ telescopes, the rates $v_x$ are $0$ at the boundaries $x=1, N$ and rates $r_x$ for the color change cancel out, the intensities $\widetilde{\lambda}$ and $\lambda$ cancel out as well. The Radon-Nikodym derivative takes the form
\begin{align}\label{eq:radon-nikodym-sum-jumps}
\frac{\dd \widetilde{\Pp}^{N}}{\dd \Pp_{0}^{N}}(t)  = &  \exp \Big\{ \sum\limits_{s \leq t} \log \left( 1 + \varepsilon \left[v(x_{j_{s}}, \phi_{x_{j_{s}}}(\eta_{s})) - v(x_{j_{s}}+1, \phi_{x_{j_{s}}+1}(\eta_{s}))\right] \right) + \\
& + \sum\limits_{s_{+} \leq t} \log \left( 1 + \varepsilon \left[r(x_{j_{s_{+}}}, \phi_{x_{j_{s_{+}}}}(\eta_{s_{+}}))\right] \right) + \sum\limits_{s_{-} \leq t} \log \left( 1 - \varepsilon \left[r(x_{j_{s_{-}}}, \phi_{x_{j_{s_{-}}}}(\eta_{s_{-}}))\right] \right) \Big\},\nonumber 
\end{align}
where $j_{s}$ is the label of the particle which makes a swap at time $s$ and $j_{s_{\pm}}$ is the label of the particle that changes its color by $\pm 1$ at time $s_{\pm}$.

To simplify this formula we will use the fact that empirical currents across edges can be approximated by their averages, modulo a small martingale. More precisely, let us denote for simplicity
\[
(\nabla_{x} v)(\eta) = v_x (\eta) - v_{x+1} (\eta).
\]
We will sometimes use this notation with $x=N$, in which case we assume $(\nabla_{x} v)(\eta) = 0$. For brevity of notation whenever sums involving both $r_x$ and $-r_x$ appear, we will write them as one term with a $\pm$ sign, that is, with $\sum\limits_{x}(1 \pm \varepsilon r_x)$ serving as a shorthand for $\sum\limits_{x}(1 + \varepsilon r_x) + \sum\limits_{x}(1 - \varepsilon r_x)$ and so on.

We introduce the following extension of the dynamics under $\Pp_{0}^{N}$ -- for any functions $h(x,\eta)$, $h^{\pm}(x,\eta)$, $x \in \{1, \ldots, N\}$ consider the extended state space $E'$, consisting of pairs $(\eta, J)$, $J \in \mathbb{R}$, and the generator $\Ll'$ acting by
\begin{align*}
 (\Ll' f)(\eta, J) = & \frac{1}{2} N^{\alpha} \Big[ \sum\limits_{x=1}^{N-1} \left( f(\eta^{x,x+1}, J + h(x, \eta)) - f(\eta, J) \right) +\\
& + \sum\limits_{x=1}^{N} \left( f(\eta^{x,+}, J + h^{+}(x, \eta)) - f(\eta, J) \right) + \sum\limits_{x=1}^{N} \left( f(\eta^{x,-}, J + h^{-}(x, \eta)) - f(\eta, J) \right) \Big].
\end{align*}
In other words, in the evolution of the extended configuration $(\eta_t, J_t)$ each time the process makes a jump, $J_t$ is increased by $h(x, \eta_t)$, $h^{+}(x, \eta_t)$ or $h^{-}(x, \eta_t)$, depending on the type of the jump (swap or color change). Now if we take
\begin{align*}
& h(x,\eta) = \log \left[ 1 + \varepsilon (\nabla_{x} v)(\eta)\right], \\
& h^{\pm}(x,\eta) = \log \left[ 1 \pm \varepsilon r_{x}(\eta)\right],
\end{align*}
we see that $J_t$ is simply equal to the sum over jumps appearing in the exponent in \eqref{eq:radon-nikodym-sum-jumps}. Thus to bound the Radon-Nikodym derivative we only need to bound $J_t$.

This is done by use of an exponential martingale -- for any $\lambda > 0$ the following process
\[
Z_{t} = \exp\left\{ \lambda J_{t}- \int\limits_{0}^{t} e^{-\lambda J_{s}} \Ll' e^{\lambda J_{s}} \, ds \right\}
\]
is a local martingale with respect to $\Pp_{0}^{N}$. We will actually only need to consider $\lambda = 2$. Writing out the action of $\Ll'$ on the function $g(\eta, J) = e^{2 J}$ we obtain
\[
Z_{t} = \exp\left\{ 2 J_{t}- \frac{1}{2} N^{\alpha} \int\limits_{0}^{t} \sum\limits_{x=1}^{N} \left[ \left( e^{2 \log(1 + \varepsilon (\nabla_{x} v)(\eta_{s}))} - 1\right) +  \left( e^{2 \log(1 \pm \varepsilon r_x (\eta_{s})} - 1 \right) \right]  ds \right\}.
\]
Now we have
\begin{align*}
& e^{2 \log(1 + \varepsilon (\nabla_{x} v)(\eta_{s}))} - 1 = (1 + \varepsilon (\nabla_{x} v)(\eta_{s}))^2 - 1 = 2 \varepsilon (\nabla_{x} v)(\eta_{s}) + \varepsilon^2 \left[(\nabla_{x} v)(\eta_{s})\right]^2, \\
& e^{2 \log(1 \pm \varepsilon r_{x}(\eta_{s}))} - 1 = (1 \pm \varepsilon r_{x}(\eta_{s}))^2 - 1 = \pm 2 \varepsilon r_{x}(\eta_{s}) + \varepsilon^2 r_{x}(\eta_{s})^2.
\end{align*}
The sum of terms linear in $\varepsilon$ vanishes -- the rates $r$ for $\pm 1$ color change have opposite sign and the sum involving $\nabla_{x}v$ telescopes. Recalling that $\varepsilon = N^{1-\alpha}$ and $\gamma = 3 - \alpha$, so $N^{\alpha+1}\varepsilon^2 = N^\gamma$, we can then write
\[
Z_{t} = \exp\left\{ 2 J_{t} - \frac{1}{2} N^{\gamma} \int\limits_{0}^{t} \frac{1}{N}\sum\limits_{x=1}^{N} \left[ (\nabla_{x} v)(\eta_{s})^2  + r_x(\eta_{s})^2\right] \, ds \right\}.
\]
Since the rates $v$ and $r$ are bounded, we have $Z_t = e^{ 2 J_t - N^{\gamma} X_t}$, where $|X_t| \leq C$ for some constant $C > 0$ depending only on $v$, $r$ and $T$. In particular we get
\[
\E e^{2 J_t} = \E \left( e^{2 J_t - N^{\gamma} X_t} e^{N^{\gamma}X_t} \right) = \E \left( Z_t e^{N^{\gamma}X_t} \right) \leq e^{C N^{\gamma}} \E Z_t.
\]
Since $Z_t$ is a local martingale bounded from below, it is a supermartingale, so we have $\E Z_t \leq \E Z_0 = 1$ and thus
\begin{equation}\label{eq:exponential-zt}
\E e^{2 J_t} \leq e^{C N^\gamma}.
\end{equation}
Now we can transfer the superexponential bound of Lemma \ref{lm:one-block-superexponential-unbiased} from $\Pp_{0}^{N}$ to $\widetilde{\Pp}^{N}$. Let $\Oo_{N,l}$ be the event from the statement of the lemma and let us write simply $\frac{\dd \widetilde{\Pp}^{N}}{\dd \Pp_{0}^{N}} = \frac{\dd \widetilde{\Pp}^{N}}{\dd \Pp_{0}^{N}}(T)$. Denoting by $\widetilde{\E}$ the expectation with respect to $\widetilde{\Pp}^{N}$ we have
\[
\widetilde{\Pp}^{N}\left( \Oo_{N,l} \right) = \widetilde{\E} \left( \id_{\Oo_{N,l}} \right) = \E \left( \frac{\dd \widetilde{\Pp}^{N}}{\dd \Pp_{0}^{N}} \id_{\Oo_{N,l}} \right).
\]
Applying the Cauchy-Schwarz inequality gives
\[
\widetilde{\Pp}^{N}\left( \Oo_{N,l} \right) \leq \left[ \E \left(\frac{\dd \widetilde{\Pp}^{N}}{\dd \Pp_{0}^{N}}\right)^2 \right]^{1/2} \cdot \Pp_{0}^{N}\left(\Oo_{N,l}\right)^{1/2}.
\]
Recalling that $\frac{\dd \widetilde{\Pp}^{N}}{\dd \Pp_{0}^{N}} = e^{J_T}$ and applying the bound \eqref{eq:exponential-zt} we obtain
\[
\widetilde{\Pp}^{N}\left( \Oo_{N,l} \right) \leq e^{c N^{\gamma}} \Pp_{0}^{N}\left(\Oo_{N,l}\right)^{1/2}
\]
with $c = \frac{C}{2}$. Thus
\[
\limsup\limits_{N \to \infty} N^{-\gamma} \log \widetilde{\Pp}^{N}\left( \Oo_{N,l} \right) \leq  c + \frac{1}{2}\limsup\limits_{N \to \infty} N^{-\gamma} \log \Pp_{0}^{N}\left( \Oo_{N,l} \right)
\]
and taking $\limsup$ as $l \to \infty$ together with an application of Lemma \ref{lm:one-block-superexponential-unbiased} finishes the proof.
\end{proof}
\end{section}

\begin{section}{Large deviation lower bound}\label{sec:lower-bound}

In this section we prove the large deviation lower bound of Theorem \ref{th:theorem-main-lower}. Let us assume that the permuton process $X$ satisfies equations \ref{eq:ode-general}. Since we already know how to construct a biased interchange process that will typically display the behavior of $X$, to bound the probability that the trajectory of a random particle in the interchange process is close in distribution to $X$ we only need to compare the unbiased process with the biased one by means of calculating their Radon-Nikodym derivative.

Since these two processes have different configuration spaces, for convenience we introduce the \emph{unbiased interchange process with colors}, which has the same configuration space as the biased process associated to \ref{eq:ode-general} and the generator $\Ll^{u}$ obtained by putting all velocities $v$ to $0$
\begin{align}\label{eq:unbiased-with-colors}
& (\Ll^{u}_{t} f)(\eta) = \frac{1}{2} N^{\alpha} \sum\limits_{y=1}^{N-1} (f(\eta^{y, y+1}) - f(\eta))  + \frac{1}{2} N^{\alpha} \sum\limits_{x=1}^{N} \left[ 1 \pm \varepsilon r(t, x, \phi_{x}(\eta)) \right] (f(\eta^{x,\pm}) - f(\eta)).
\end{align}
Since here the colors do not influence the dynamics of swaps, the corresponding permutation process $X^{\eta^{N}}$ will be the same as for the ordinary unbiased interchange process (and we will never be interested in the distribution of $\Phi^{\eta^{N}}$ for the unbiased process with colors).

Let us start by deriving the formula for the Radon-Nikodym derivative of the unbiased process with colors with respect to the biased one. Recall that $v_x (s, \eta_{s}) = v(s,x,\phi_{x}(\eta_s))$ denotes the velocity at time $s$ of the particle at site $x$. Let $\Pp^{N}_{u}$ denote the law of the unbiased process with colors. We will prove the following statement
\begin{lemma}\label{lm:radon-nikodym-lower-bound}
We have
\[
\frac{\dd \Pp_{u}^{N}}{\dd \widetilde{\Pp}^{N}}(T) = \exp \Bigg\{ - \frac{1}{2} N^{\gamma} \Bigg[ \int\limits_{0}^{T} \frac{1}{N} \sum\limits_{x=1}^{N} v_x (s, \eta_{s})^2 \, ds + o(1) \Bigg]\Bigg\},
\]
where the $o(1)$ term goes to $0$ in probability as $N \to \infty$.
\end{lemma}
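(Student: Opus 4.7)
The plan is to compute $\frac{\dd\Pp_u^N}{\dd\widetilde{\Pp}^N}(T)$ explicitly from \eqref{eq:radon-nikodym-sum-rates}, Taylor expand the resulting log-rate ratio in $\varepsilon = N^{1-\alpha}$, replace jump sums by time integrals via Dynkin's martingale formula, and finally identify the leading coefficient using the one block estimate to handle the cross correlations between neighbouring velocities.

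Comparing the generators $\Ll^u_t$ and $\widetilde{\Ll}_t$, the color-change rates $\tfrac{N^\alpha}{2}(1 \pm \varepsilon r(t,x,\phi_x))$ are identical in both processes, so their log-ratios vanish in the jump term of \eqref{eq:radon-nikodym-sum-rates}. The difference of total outgoing intensities equals $\tfrac{N^\alpha \varepsilon}{2}\sum_{x=1}^{N-1}(\nabla_x v)(\eta)$, which telescopes to zero since $v_1 = v_N = 0$. What remains is the sum of swap contributions
\[
\frac{\dd\Pp_u^N}{\dd\widetilde{\Pp}^N}(T) = \exp\left\{-\sum_{s \leq T}\log\bigl(1 + \varepsilon(\nabla_{x_{j_s}} v)(s,\eta_{s-})\bigr)\right\},
\]
summed over swap times $s$ with $j_s$ the swap location. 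Since $v$ is uniformly bounded and $\varepsilon \to 0$, I Taylor expand $\log(1+u) = u - \tfrac{u^2}{2} + O(u^3)$; the cubic remainder contributes at most $O(N^{\alpha+1}\varepsilon^3 T) = O(N^{4-2\alpha}) = o(N^\gamma)$, using $\alpha > 1$.

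Next I would apply Dynkin's formula \eqref{eq:martingale1} to convert each of the two surviving jump sums into a compensator plus a mean-zero martingale. The linear-in-$u$ compensator is $\tfrac{N^\alpha \varepsilon}{2}\int_0^T \sum_x(1+\varepsilon(\nabla_x v))(\nabla_x v)\,ds$, whose leading part telescopes to zero, leaving the $\varepsilon^2$ contribution $\tfrac{N^\alpha \varepsilon^2}{2}\int_0^T \sum_x(\nabla_x v)^2\,ds$. The quadratic-in-$u$ compensator analogously produces $-\tfrac{N^\alpha \varepsilon^2}{4}\int_0^T \sum_x(\nabla_x v)^2\,ds$ to leading order. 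Using $N^{\alpha+1}\varepsilon^2 = N^\gamma$, these combine to
\[
-\sum_{s \leq T}\log(1+\varepsilon(\nabla_x v)) = -\frac{N^\gamma}{4}\int_0^T \frac{1}{N}\sum_{x=1}^{N-1}(\nabla_x v(s,\eta_s))^2\,ds + M_T + o(N^\gamma),
\]
where $M_T$ is a mean-zero martingale. A quadratic-variation estimate via \eqref{eq:martingale2}, using that each jump changes the log by $O(\varepsilon)$ and the total jump rate is $O(N^{\alpha+1})$, gives $\E M_T^2 = O(N^{\alpha+1}\varepsilon^2 T) = O(N^\gamma)$, so $M_T$ is of order $N^{\gamma/2}$ in probability, hence $o(N^\gamma)$.

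It remains to pass from $(\nabla_x v)^2 = v_x^2 - 2 v_x v_{x+1} + v_{x+1}^2$ to $2 v_x^2$, which reduces to showing the time-averaged cross correlation $\int_0^T \tfrac{1}{N}\sum_x v_x v_{x+1}(s,\eta_s)\,ds$ vanishes in probability. Here I would invoke the one block estimate: by Lemma \ref{lm:one-block-superexponential-probability} applied with $h_x = v_{x-1}v_x$ and transferred from $\Pp_0^N$ to $\widetilde{\Pp}^N$ via the Radon-Nikodym bound in the proof of Lemma \ref{lm:one-block-superexponential-biased}, one may replace $v_x v_{x+1}(\eta_s)$ by its expectation under the local product measure $\mu_{x,l}^{\eta_s}$. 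Because $\mu_{x,l}^{\eta_s}$ is product, this expectation factors as $\E_{\mu_{x,l}^{\eta_s}}[v_x]\cdot \E_{\mu_{x,l}^{\eta_s}}[v_{x+1}]$, and each factor is a box-empirical average of $v_y(\phi)$ over i.i.d. uniform colors; by Proposition \ref{prop:speeds} (mean zero) and Hoeffding's inequality, each vanishes as $l \to \infty$ uniformly in $N$. After time-averaging this yields $\tfrac{1}{N}\sum_x(\nabla_x v)^2 \approx \tfrac{2}{N}\sum_x v_x^2$, promoting the coefficient $\tfrac{1}{4}$ above to the required $\tfrac{1}{2}$. The main obstacle is precisely this coefficient correction: a naive discrete-gradient squared would stop at $\tfrac{1}{4}$, and only the microscopic decorrelation of velocities at distinct sites in stationarity, recovered through the one block estimate together with $\E v_x = 0$, justifies upgrading to $\tfrac{1}{2}$.
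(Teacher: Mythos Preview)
Your argument tracks the paper's proof closely through the Radon--Nikodym formula, the Taylor expansion to order $\varepsilon^2$, the replacement of jump sums by compensators plus martingale, and the quadratic-variation bound $\E M_T^2 = O(N^\gamma)$. All of this is correct and essentially identical to what the paper does.

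The one substantive divergence is in how you kill the cross term $\int_0^T \frac{1}{N}\sum_x v_x(s,\eta_s) v_{x+1}(s,\eta_s)\,ds$. You propose to route this through the one block estimate, transfer it to $\widetilde{\Pp}^N$ by a Radon--Nikodym bound, factor under the local product measure, and then argue the box averages vanish by Hoeffding. The paper takes a much shorter path: since the biased process is \emph{started in stationarity}, at each fixed $s$ the configuration $\eta_s$ has the stationary law, so by Proposition~\ref{prop:speeds} the velocities $v_x(s,\eta_s)$ at distinct sites are independent with mean zero. Hence $\E[v_x v_{x+1}] = 0$ and the terms are uncorrelated across $x$, so the weak law of large numbers gives $\frac{1}{N}\sum_x v_x v_{x+1}(s,\eta_s) \to 0$ in probability for each $s$; bounded convergence in $s$ finishes it. No one block estimate is needed here at all.

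Your route is not wrong in spirit, but it is a detour: the superexponential one block machinery is far stronger than the convergence-in-probability statement you need, and adapting the setup of Section~\ref{sec:one-block} to the function $v_x v_{x+1}$ (where $v_x$ depends on both the site $x$ and the \emph{evolving} color $\phi_x$) would require some extra care beyond the lemmas as stated. More to the point, your final step---showing the box-empirical averages of $v$ vanish---still appeals to the colors being i.i.d.\ uniform, i.e.\ to stationarity, so you end up using exactly the ingredient that the paper exploits directly from the outset.
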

\begin{proof}
The calculation is similar as in the proof of Lemma \ref{lm:one-block-superexponential-biased}, with the difference that we are using generator $\widetilde{\Ll}$ instead of $\Ll_0$. By the analog of formula \eqref{eq:radon-nikodym-sum-rates} for time-inhomogeneous processes we have
\[
\frac{\dd \Pp_{u}^{N}}{\dd \widetilde{\Pp}^{N}}(t)  = \exp \left\{ -\sum\limits_{s \leq t} \log \left( 1 + \varepsilon \left[v(s,x_{j_{s}}, \phi_{x_{j_{s}}}(\eta_{s})) - v(s,x_{j_{s}}+1, \phi_{x_{j_{s}}+1}(\eta_{s}))\right] \right)  \right\},
\]
where the sum is over jump times $s \leq t$.

Denoting the sum in the exponent by $J_{t}$, we obtain by \eqref{eq:martingale1} (by considering as before the generator $\widetilde{\Ll}$ acting on an extended configuration space) that
\[
J_{t} = M_{t} + \frac{1}{2} N^{\alpha}\sum\limits_{x=1}^{N-1} \int\limits_{0}^{t} \left[ 1 + \varepsilon (\nabla_{x} v)(s, \eta_{s})\right] \log \left[ 1 + \varepsilon (\nabla_{x} v)(s, \eta_{s})\right] ds,
\]
where $M_{t}$ is a local martingale with respect to $\widetilde{\Pp}^{N}$. Expanding all terms up to order $\varepsilon^2$ allows us to write
\[
\frac{\dd \Pp_{u}^{N}}{\dd \widetilde{\Pp}^{N}}(t)  = \exp \left\{- M_{t} -  \frac{1}{2} N^{\alpha} \int\limits_{0}^{t} \sum\limits_{x=1}^{N-1} \left[ \varepsilon (\nabla_{x} v)(s,\eta_{s}) + \frac{\varepsilon^2}{2} \left[ (\nabla_{x} v)(s,\eta_{s}) \right]^2  \right] ds  + O(N^{\alpha+1}\varepsilon^3)\right\}.
\]
As before the term linear in $\varepsilon$ vanishes. Recalling that $\varepsilon = N^{1-\alpha}$ and $\gamma = 3 - \alpha$ we have
\[
\frac{\dd \Pp_{u}^{N}}{\dd \widetilde{\Pp}^{N}}(t)  = \exp \left\{ - M_{t} - \frac{1}{4} N^{\gamma} \int\limits_{0}^{t} \frac{1}{N} \sum\limits_{x=1}^{N-1} \left[ (\nabla_{x} v)(s,\eta_{s}) \right]^2  \, ds  + o(N^{\gamma})\right\}.
\]
Expanding $\left[(\nabla_{x} v)(s,\eta_s)\right]^2 = (v_x(s,\eta_s) - v_{x+1}(s,\eta_s))^2$ leads us to
\begin{align}\label{eq:expansion-of-radon-nikodym}
& \frac{\dd \Pp_{u}^{N}}{\dd \widetilde{\Pp}^{N}}(t)  = \exp \Bigg\{ - M_{t} - \frac{1}{2} N^{\gamma} \int\limits_{0}^{t} \frac{1}{N} \sum\limits_{x=1}^{N} v_x(s,\eta_{s})^2 \, ds  + \frac{1}{2} N^{\gamma} \int\limits_{0}^{t} \frac{1}{N} \sum\limits_{x=1}^{N} v_x(s,\eta_{s})v_{x+1}(s,\eta_{s}) \, ds + o(N^{\gamma})\Bigg\}.
\end{align}
The martingale term will be typically $o(N^{\gamma})$. To see this, we use formula \eqref{eq:martingale2} -- by performing a calculation similar to the one above we get that
\[
N_{t} = M_{t}^2 - \frac{1}{2} N^{\alpha} \int\limits_{0}^{t} \sum\limits_{x=1}^{N} \left[ 1 + \varepsilon (\nabla_{x} v)(s,\eta_{s})\right] \left[\log \left( 1 + \varepsilon (\nabla_{x} v)(s,\eta_{s})\right) \right]^2 \, ds
\]
is a local martingale with respect to $\widetilde{\Pp}^{N}$. By expanding the $\log$ terms up to $\varepsilon^2$ we see that the second term above is bounded by $C N^{\alpha+1} \varepsilon^2 = C N^{\gamma}$ for some $C > 0$. In particular $N_{t}$ is bounded from below, so it is a supermartingale. Thus $\E N_T \leq \E N_0 = 0$ and $\E M_T^2 \leq C N^\gamma$, so Chebyshev's inequality implies that $M_T = o(N^\gamma)$ with high probability.

The second sum in the exponent in \eqref{eq:expansion-of-radon-nikodym} will be small by invariance of the uniform distribution of colors in the biased process. More precisely, at fixed time $s$ for each $x$ the correlation term $v_{x}(s,\eta_{s})v_{x+1}(s,\eta_{s})$ has mean $0$, since $\eta_{s}$ has stationary distribution and by Proposition \ref{prop:speeds} in stationarity velocities at different sites are independent with mean $0$. Moreover, for the same reason these terms are uncorrelated for different $x$, so by the weak law of large numbers we get that for any $s \leq T$ and $\delta > 0$
\[
\widetilde{\Pp}^{N} \left( \left| \frac{1}{N} \sum\limits_{x=1}^{N} v_{x}(s,\eta_{s})v_{x+1}(s,\eta_{s}) \right| > \delta \right) \to 0
\]
as $N \to \infty$.

Since this holds for any fixed $s$ and the random variables are bounded, we also have
\[
\widetilde{\Pp}^{N} \left( \left| \int\limits_{0}^{T} \frac{1}{N} \sum\limits_{x=1}^{N} v_{x}(s,\eta_{s})v_{x+1}(s,\eta_{s}) \, ds \right| > \delta \right) \to 0
\]
as $N \to \infty$, which proves that the correlation term is $o(N^{\gamma})$ with high probability. Together with the bound on $M_t$ this proves the desired formula for the Radon-Nikodym derivative.
\end{proof}

We can now use Lemma \ref{lm:radon-nikodym-lower-bound} and the law of large numbers established in Theorem \ref{th:lln} to prove a large deviation lower bound for the interchange process. As the formula from the lemma suggests, the large deviation rate function will be related to the energy of the process to which the biased interchange process converges.

Recall from \eqref{eq:process-energy} that for any process $\pi \in \PP$ its energy was defined by
\[
I(\pi) = \E_{\gamma \sim \pi} \EE(\gamma),
\]
where $\EE(\gamma)$ is the Dirichlet energy of the path $\gamma$ defined by \eqref{eq:energy-path-full}. We have the following large deviation lower bound
\begin{theorem}\label{th:lower-bound}
Let $\Pp^{N}$ be the law of the unbiased interchange process $\eta^N$ and let $\mu^{\eta^{N}}$ be the (random) distribution of the corresponding permutation process $X^{\eta^N}$. Let $P = (X, \Phi)$ be the colored trajectory process associated to the equation \eqref{eq:ode-general} and let $\mu$ denote the distribution of $X$. For any open set $\Oo \subseteq \PP$ such that $\mu \in \Oo$ we have
\[
\liminf_{N \to \infty} N^{-\gamma} \log \Pp^{N}\left(\mu^{\eta^{N}} \in \Oo\right) \geq - I(\mu).
\]
\end{theorem}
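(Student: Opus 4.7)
The plan is to prove the bound via a change of measure from the unbiased to the biased interchange process, exploiting that Theorem \ref{th:lln} makes the event $\{\mu^{\eta^N} \in \Oo\}$ typical under $\widetilde{\Pp}^N$, and then estimating the Radon-Nikodym derivative on a good event using Lemma \ref{lm:radon-nikodym-lower-bound}. Since $\mu^{\eta^N}$ depends only on positions, not colors, the laws $\Pp^N$ and $\Pp_u^N$ (the unbiased process with colors, generator \eqref{eq:unbiased-with-colors}) induce the same distribution on $\mu^{\eta^N}$. Since $\Pp_u^N$ lives on the same configuration space as $\widetilde{\Pp}^N$, this lets me write
\[
\Pp^N(\mu^{\eta^N} \in \Oo) \;=\; \E_{\widetilde{\Pp}^N}\!\left[\frac{\dd \Pp_u^N}{\dd \widetilde{\Pp}^N}(T) \, \id_{\{\mu^{\eta^N} \in \Oo\}}\right].
\]

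Fix $\varepsilon > 0$. Writing the exponent of Lemma \ref{lm:radon-nikodym-lower-bound} as $-\tfrac{1}{2}N^\gamma[A_N + \varepsilon_N]$ with $A_N = \int_0^T \tfrac{1}{N}\sum_x v_x(s,\eta_s)^2\,ds$ and $\varepsilon_N \to 0$ in $\widetilde{\Pp}^N$-probability, I would define the good event
\[
G_N \;=\; \{\mu^{\eta^N} \in \Oo\} \,\cap\, \{A_N \leq 2I(\mu) + \varepsilon\} \,\cap\, \{|\varepsilon_N| \leq \varepsilon\}
\]
and verify $\widetilde{\Pp}^N(G_N) \to 1$. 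The first event has $\widetilde{\Pp}^N$-probability tending to $1$ by Theorem \ref{th:lln} combined with Remark \ref{rm:lln}: the random measures $\nu^{\eta^N}$ converge in probability to the deterministic $\nu$, and projecting onto the position coordinate gives $\mu^{\eta^N} \to \mu$ in $d_\mathcal{W}^{sup}$, so eventually $\mu^{\eta^N} \in \Oo$ with high probability.

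For the middle event, I would use stationarity: at each fixed time $s$ the configuration $\eta_s$ has uniformly distributed colors, so by Proposition \ref{prop:speeds} the random variables $\{v_x(s,\eta_s)\}_{x=1}^N$ are independent. Hence
\[
\E_{\widetilde{\Pp}^N} A_N \;=\; \int_0^T \frac{1}{N^2}\sum_{x,\phi=1}^N v(s,x,\phi)^2\, ds \;\longrightarrow\; \int_0^T\!\!\int_0^1\!\!\int_0^1 V(s,x,\phi)^2\,dx\,d\phi\,ds \;=\; 2I(\mu),
\]
where the convergence uses the Riemann-sum approximation \eqref{eq:s-r-approx} and the final identity follows from the fact that $(X(s),\Phi(s))$ is uniform on $[0,1]^2$ (Lemma \ref{lm:p-is-stationary}) and $\dot X(s) = V(s,X(s),\Phi(s))$. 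For concentration, independence at each time gives $\mathrm{Var}(\tfrac{1}{N}\sum_x v_x(s,\eta_s)^2) = O(1/N)$ uniformly in $s$, and then by Cauchy-Schwarz
\[
\E (A_N - \E A_N)^2 \;\leq\; T \int_0^T \mathrm{Var}\Big(\tfrac{1}{N}\textstyle\sum_x v_x(s,\eta_s)^2\Big) ds \;=\; O(1/N),
\]
so Chebyshev gives $A_N \to 2I(\mu)$ in $\widetilde{\Pp}^N$-probability.

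On $G_N$, Lemma \ref{lm:radon-nikodym-lower-bound} now yields $\frac{\dd \Pp_u^N}{\dd \widetilde{\Pp}^N}(T) \geq \exp\{-N^\gamma (I(\mu) + \varepsilon)\}$, hence
\[
\Pp^N(\mu^{\eta^N} \in \Oo) \;\geq\; \exp\{-N^\gamma(I(\mu) + \varepsilon)\}\,\widetilde{\Pp}^N(G_N).
\]
Taking $N^{-\gamma}\log$, applying $\liminf_{N\to\infty}$, using $\widetilde{\Pp}^N(G_N) \to 1$, and finally sending $\varepsilon \to 0$ gives the claim. The only nontrivial technical step is the concentration of $A_N$; here the payoff of working with the color variable $\phi$ (whose uniform distribution is invariant under $\widetilde{\Pp}^N$) is that the velocities at different sites are \emph{exactly} independent at each time, which makes the variance bound immediate and avoids invoking the one-block machinery a second time.
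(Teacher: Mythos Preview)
Your proof is correct and follows the same overall strategy as the paper: pass to the unbiased process with colors, change measure to $\widetilde{\Pp}^N$ via Lemma~\ref{lm:radon-nikodym-lower-bound}, use the law of large numbers (Theorem~\ref{th:lln} and Remark~\ref{rm:lln}) to make the target event typical, and restrict to a good event on which the Radon--Nikodym derivative is bounded below.

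The one place where you diverge from the paper is in controlling the energy term $A_N = \int_0^T \tfrac{1}{N}\sum_x v_x(s,\eta_s)^2\,ds$. The paper argues via the coupling from the proof of Theorem~\ref{th:lln}: on the event $\{\nu^{\eta^N}$ close to $\nu\}$ the empirical trajectories $(X_i(\eta^N),\Phi_i(\eta^N))$ are uniformly close to solutions of the ODE, so by Lipschitz continuity of $V$ the discrete energy $I_N(\eta)$ is close to $I(\mu)$. You instead observe that stationarity makes the colors at different sites independent at each fixed time, so a direct variance bound gives $A_N \to 2I(\mu)$ in $\widetilde{\Pp}^N$-probability. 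Your route is shorter and avoids re-invoking the coupling; the paper's route has the mild advantage that it ties the energy estimate to the event $\{\nu^{\eta^N} \in \widetilde{B}_\varepsilon\}$ itself rather than introducing a separate concentration event, but both lead to the same conclusion.
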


\begin{proof}
It will be enough to show the bound above for $\Oo$ being any open ball $B(\mu, \varepsilon)$ in $\PP$ around $\mu$. Let $\Pp_{u}^{N}$ be the distribution of the unbiased process with colors, $\nu^{\eta^{N}}$ the distribution of the colored permutation process $P^{\eta^N} = (X^{\eta^N}, \Phi^{\eta^N})$ associated to $\eta^N$. Let $\nu$ denote the distribution of $P = (X, \Phi)$ and $\widetilde{B}(\nu, \varepsilon)$ an open ball around $\nu$ in $\MM(\DDD)$. Since the projection $(X, \Phi) \mapsto X$ is continuous as a map from $\DDD$ to $\DD$, the corresponding projection from $\MM(\DDD)$ to $\MM(\DD)$ is also continuous. As $\mu^{\eta^N}$ has the same law under $\Pp^N$ and $\Pp_{u}^N$ (remember that in the latter process the colors do not influence the dynamics of swaps), we have that for any $\varepsilon > 0$ there exists $\varepsilon' > 0$ such that
$\Pp^{N}\left(\mu^{\eta^{N}} \in B(\mu, \varepsilon) \right) \geq \Pp_{u}^{N}\left(\nu^{\eta^{N}} \in \widetilde{B}(\nu, \varepsilon')\right)$. Thus to prove the large deviation bound it is sufficient to prove the local lower bound
\begin{equation}\label{eq:lower-bound-nu-epsilon}
\liminf\limits_{\varepsilon \to 0}\liminf_{N \to \infty} N^{-\gamma} \log \Pp_{u}^{N}\left(\nu^{\eta^{N}} \in \widetilde{B}(\nu, \varepsilon)\right) \geq - I(\mu).
\end{equation}

Recall that $\widetilde{\Pp}^{N}$ denotes the distribution of the biased process associated to \eqref{eq:ode-general} and consider the Radon-Nikodym derivative $\frac{\dd \Pp_{u}^{N}}{\dd \widetilde{\Pp}^{N}}(t)$.
By Lemma \ref{lm:radon-nikodym-lower-bound} we have
\begin{equation}\label{eq:radon-nikodym-in-lower-bound}
\frac{\dd \Pp_{u}^{N}}{\dd \widetilde{\Pp}^{N}}(T) = \exp \Bigg\{ -\frac{1}{2} N^{\gamma} \Bigg[ \int\limits_{0}^{T} \frac{1}{N} \sum\limits_{x=1}^{N} v_{x}(\eta_{s})^2 \, ds  +  Y_N\Bigg]\Bigg\},
\end{equation}
where $Y_N$ goes to $0$ in probability as $N \to \infty$.

Now by the law of large numbers from Theorem \ref{th:lln} and Remark \ref{rm:lln} the distributions $\nu^{\eta^{N}}$ converge in probability in the $d_{\mathcal{W}}^{sup}$ metric to $\nu$ when $\eta^{N}$ is sampled according to $\widetilde{\Pp}^{N}$. Thus for any $\varepsilon > 0$ and an open ball $\widetilde{B}_{\varepsilon} = \{ \zeta \in \MM(\DDD) \, | \, d_{\mathcal{W}}^{sup}(\zeta, \nu) < \varepsilon \}$ around $\nu$ in the $d_{\mathcal{W}}^{sup}$ metric we have $\lim\limits_{N \to \infty} \widetilde{\Pp}^{N}\left(\nu^{\eta^{N}} \in \widetilde{B}_{\varepsilon}\right) = 1$. Since convergence in $d_{\mathcal{W}}^{sup}$ implies convergence in $d_{\mathcal{W}}$, to prove \eqref{eq:lower-bound-nu-epsilon} it is enough to analyze the probability $\Pp^N_{u}\left(\nu^{\eta^{N}} \in \widetilde{B}_{\varepsilon}\right)$.

Fix arbitrary $\delta > 0$ and let $U_{N} = \{ |Y_N| \leq \delta \}$. Let $V_{N,\varepsilon} = U_{N} \cap \{ \nu^{\eta^{N}} \in \widetilde{B}_{\varepsilon}\}$ and $\frac{\dd \Pp_{u}^{N}}{\dd \widetilde{\Pp}^{N}} = \frac{\dd \Pp_{u}^{N}}{\dd \widetilde{\Pp}^{N}}(T)$. With $\E$ denoting the expectation with respect to $\Pp_{u}^{N}$ and $\widetilde{\E}$ with respect to $\widetilde{\Pp}^{N}$ we have for any $\varepsilon > 0$ and sufficiently large $N$
\[
\Pp^{N}_{u}\left(\nu^{\eta^{N}} \in \widetilde{B}_{\varepsilon}\right) = \E \left(\id_{\{ \nu^{\eta^{N}} \in \widetilde{B}_{\varepsilon} \}}\right) \geq \E (\id_{V_{N,\varepsilon}}) = \widetilde{\E} \left( \frac{\dd \Pp_{u}^{N}}{\dd \widetilde{\Pp}^{N}} \id_{V_{N,\varepsilon}} \right) \geq \widetilde{\Pp}^{N}(V_{N,\varepsilon}) \left(\inf_{ V_{N,\varepsilon}}\frac{\dd \Pp_{u}^{N}}{\dd \widetilde{\Pp}^{N}} \right).
\]
We have $\lim\limits_{N \to \infty} \widetilde{\Pp}^{N}(V_{N,\varepsilon}) = 1 $ and on the event $U_N$ we have
\[
\frac{\dd \Pp_{u}^{N}}{\dd \widetilde{\Pp}^{N}}(T) \geq \exp \Bigg\{ -\frac{1}{2} N^{\gamma} \Bigg[ \int\limits_{0}^{T} \frac{1}{N} \sum\limits_{x=1}^{N} v_{x}(\eta_{s})^2 \, ds  +  \delta\Bigg]\Bigg\}.
\]
This implies
\begin{equation}\label{eq:lower-bound-with-in}
N^{-\gamma} \log \Pp_{u}^{N}\left(\nu^{\eta^{N}} \in \widetilde{B}_{\varepsilon}\right) \geq - \inf_{\eta \in V_{N,\varepsilon}} I_N (\eta) - \delta,
\end{equation}
where
\[
I_{N}(\eta) = \frac{1}{2}  \left(\frac{1}{N} \sum\limits_{x=1}^{N} \int\limits_{0}^{T} v_x(s,\eta_{s})^2 \, ds \right).
\]

Now it is not difficult to see that the infimum on the right hand side of \eqref{eq:lower-bound-with-in} converges to $I(\mu)$ as $N \to \infty$ and then $\varepsilon \to 0$. When $(X, \Phi)$ is sampled from $\nu$, $X$ is the solution of \eqref{eq:ode-general} with a uniformly random initial condition, so the energy $I(\mu)$ is simply equal to
\[
\E \int\limits_{0}^{T} V(s, X(s),\Phi(s))^2 \, ds,
\]
where the expectation is with respect to the choice of $(X(0), \Phi(0))$. Recall the notation $X_{i}(\eta^{N}_{t}) = \frac{1}{N} x_{i}(\eta^{N}_{t})$, $\Phi_{i}(\eta^{N}_{t}) = \frac{1}{N} \phi_{i}(\eta^{N}_{t})$. In light of \eqref{eq:s-r-approx} what we need to show is that
\begin{equation}\label{eq:energy}
\inf\limits_{\eta \in V_{N,\varepsilon}} \left( \frac{1}{N} \sum\limits_{i=1}^{N} \int\limits_{0}^{T} V(s, X_{i}(\eta^{N}_{s}),\Phi_{i}(\eta^{N}_{s}))^2 \, ds \right) \to \E \int\limits_{0}^{T} V(s, X(s),\Phi(s))^2 \, ds
\end{equation}
as $N \to \infty$ and then $\varepsilon \to 0$.

Consider the trajectory $\eta^N$ and for any particle $i$ let $(X_{i}(t), \Phi_{i}(t))$ denote the solution of \eqref{eq:ode-general} corresponding to the initial condition $(X_{i}(\eta^{N}_{0}), \Phi_{i}(\eta^{N}_{0}))$. Since the velocities $V$ are bounded, we can write
\begin{align}
& \left| \int\limits_{0}^{T} \left[ V(s,X_{i}(\eta^{N}_{s}),\Phi_{i}(\eta^{N}_{s}))^2 - V(s,X_{i}(s),\Phi_{i}(s))^2 \right] ds \right| \leq \nonumber \\
& \leq C \int\limits_{0}^{T} \big| V(s,X_{i}(\eta^{N}_{s}),\Phi_{i}(\eta^{N}_{s})) - V(s,X_{i}(s),\Phi_{i}(s)) \big| \, ds \label{eq:speeds-squared} \leq \\ & \leq K T \max \left\{\sup_{t \leq T} \left| X_{i}(\eta^{N}_{t}) - X_{i}(t)\right|, \sup_{t \leq T} \left| \Phi_{i}(\eta^{N}_{t}) - \Phi_{i}(t)\right| \right\} \nonumber
\end{align}
for some $C, K > 0$ depending on the bound on $V$ and the Lipschitz constant of $V$.

Now note that if $\nu^{\eta^N} \in \widetilde{B}_{\varepsilon}$, then by considering the same coupling as in the proof of Theorem \ref{th:lln} we have
\begin{equation}
\limsup\limits_{N \to \infty} \frac{1}{N} \sum\limits_{i=1}^{N} \left| \max \left\{\sup_{t \leq T} \left| X_{i}(\eta^{N}_{t}) - X_{i}(t)\right|, \sup_{t \leq T} \left| \Phi_{i}(\eta^{N}_{t}) - \Phi_{i}(t)\right| \right\} \right| \leq \varepsilon'
\end{equation}
for some $\varepsilon' > 0$ satisfying $\varepsilon' \to 0$ as $\varepsilon \to 0$. Since $\{ \nu^{\eta^{N}} \in \widetilde{B}_{\varepsilon}\} \subseteq V_{N,\varepsilon}$, combining this with \eqref{eq:speeds-squared} we obtain that the left hand side of \eqref{eq:energy} converges to
\[ \frac{1}{N} \sum\limits_{i=1}^{N} \int\limits_{0}^{T} V(s,X_{i}(s),\Phi_{i}(s))^2 \, ds
\]
as $N \to \infty$ and then $\varepsilon \to 0$.

Since $(X_{i}(t), \Phi_{i}(t))$ is a solution of \eqref{eq:ode-general} and $V$ is the derivative of $X$, the integral is equal simply to the energy of the path $X_{i}(t)$. Since for each $i$ the initial condition $\Phi_{i}(\eta^{N}_{0})$ has uniform distribution on $\left \{\frac{1}{N}, \ldots, 1  \right\}$, independently for all $i$, it follows easily that this expression converges with high probability to the expected energy on the right hand side of \eqref{eq:energy}. This implies $\inf_{\eta \in V_{N,\varepsilon}} I_N (\eta) \to I(\nu)$ as $N \to \infty$ and then $\varepsilon \to 0$. Since in \eqref{eq:lower-bound-with-in}  we can take $\delta$ to be arbitrarily small, this proves \eqref{eq:lower-bound-nu-epsilon} and finishes the proof of the lower bound.
\end{proof}

With this theorem the large deviation lower bound for generalized solutions to Euler equations, announced as Theorem \ref{th:theorem-main-lower} in the introduction, is now an easy corollary.

\begin{theorem}\label{thm:lower-bound-for-minimizers}
Let $\Pp^{N}$ be the law of the interchange process $\eta^N$ and let $\mu^{\eta^{N}}$ be the (random) distribution of the corresponding permutation process $X^{\eta^N}$. Let $\pi$ be a permuton process which is a generalized solution to Euler equations \eqref{eq:gen-euler}. Provided $\pi$ satisfies Assumptions \eqref{as:main-assumptions}, for any open set $\Oo \subseteq \PP$ such that $\pi \in \Oo$ we have
\[
\liminf_{N \to \infty} N^{-\gamma} \log \Pp^{N}\left(\mu^{\eta^{N}} \in \Oo\right) \geq - I(\pi).
\]
\end{theorem}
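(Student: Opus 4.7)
The plan is to deduce Theorem \ref{thm:lower-bound-for-minimizers} from Theorem \ref{th:lower-bound} by approximation. The hypothesis of Theorem \ref{th:lower-bound} requires the limiting process to satisfy the ODE \eqref{eq:ode-general}, i.e. to have the piecewise-constant-in-time and boundary-regularized velocity field $V^{\beta,\delta}$ built in Section \ref{sec:ode-part}. A generalized solution $\pi$ to the Euler equations need not itself have this structure, but Proposition \ref{prop:approximation-epsilon-delta} provides precisely the required approximants $\pi^{\beta,\delta}$, whose trajectory process $X^{\beta,\delta}$ solves \eqref{eq:invariant-process}.

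Concretely, fix an open set $\Oo\subseteq \PP$ with $\pi\in \Oo$ and choose $\varepsilon>0$ so that the $d_{\mathcal{W}}$-ball $B(\pi,\varepsilon)$ is contained in $\Oo$. Since the $d_{\mathcal{W}}^{sup}$-topology is finer than the $d_{\mathcal{W}}$-topology, Proposition \ref{prop:approximation-epsilon-delta} yields $\beta_0,\delta_0>0$ such that for every $\beta\le \beta_0$ and $\delta\le \delta_0$
\[
\pi^{\beta,\delta}\in B(\pi,\varepsilon/2)\subseteq \Oo, \qquad |I(\pi^{\beta,\delta})-I(\pi)|\le \varepsilon.
\]
In particular $\Oo$ is an open neighborhood of $\pi^{\beta,\delta}$ for any such $\beta,\delta$.

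The process $\pi^{\beta,\delta}$ lies exactly in the regime to which Theorem \ref{th:lower-bound} applies (its first coordinate is $X^{\beta,\delta}$, which by construction satisfies \eqref{eq:ode-general}). Applying that theorem therefore gives
\[
\liminf_{N\to\infty}N^{-\gamma}\log \Pp^N(\mu^{\eta^N}\in \Oo)\ge -I(\pi^{\beta,\delta})\ge -I(\pi)-\varepsilon,
\]
and letting $\varepsilon\to 0$ (equivalently, taking the supremum of the right-hand side over admissible $\beta,\delta$ and using $\lim_{\beta\to 0}\lim_{\delta\to 0}I(\pi^{\beta,\delta})=I(\pi)$) yields the required bound.

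There is no new obstacle here: all of the real work has already been done, namely the construction of $V^{\beta,\delta}$ together with the energy convergence in Section \ref{sec:ode-part}, the law of large numbers for the biased process (Theorem \ref{th:lln}) and the Radon--Nikodym computation (Lemma \ref{lm:radon-nikodym-lower-bound}) that power Theorem \ref{th:lower-bound}. The only point to verify is the compatibility of topologies, which is ensured by $d_{\mathcal{W}}^{sup}\ge d_{\mathcal{W}}$, so that an open neighborhood of $\pi$ in $\PP$ is automatically also an open neighborhood of all sufficiently good approximants $\pi^{\beta,\delta}$.
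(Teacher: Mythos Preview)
Your proposal is correct and follows essentially the same approach as the paper: use Proposition~\ref{prop:approximation-epsilon-delta} to place the approximants $\pi^{\beta,\delta}$ inside $\Oo$ with nearby energy, invoke Theorem~\ref{th:lower-bound} for them, and pass to the limit. The only cosmetic difference is that the paper phrases the limit as an iterated one ($\delta\to 0$ then $\beta\to 0$) rather than choosing a single pair $(\beta,\delta)$ for each $\varepsilon$, but this is immaterial.
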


\begin{proof}
Let $\pi^{\beta, \delta}$ be the distribution of the process $X^{\beta, \delta}$ defined in Section \ref{sec:ode-part}. By the first part of Proposition \ref{prop:approximation-epsilon-delta} we have $d_{\mathcal{W}}^{sup}(\pi, \pi^{\beta, \delta}) \to 0$ as first $\delta$ and then $\beta \to 0$, in particular for small enough $\delta$ and $\beta$ we have $\pi^{\beta, \delta} \in \Oo$. Then Theorem \ref{th:lower-bound} implies that
\[
\liminf_{N \to \infty} N^{-\gamma} \log \Pp^{N}\left(\mu^{\eta^{N}} \in \Oo\right) \geq - I(\pi^{\beta, \delta}).
\]
Since by the second part of Proposition \ref{prop:approximation-epsilon-delta} we have $\lim\limits_{\beta \to 0} \lim\limits_{\delta \to 0} I(\pi^{\beta, \delta}) = I(\pi)$, the lower bound is proved.
\end{proof}

\end{section}

\begin{section}{Large deviation upper bound}\label{sec:upper-bound}

In this section we prove Theorem \ref{th:theorem-main-upper}, a large deviation upper bound for the distribution of the interchange process (we will drop the term ``unbiased'' from now on). As a first step we will bound the probability that after a (possibly short) time $t > 0$ we see a fixed permutation in the interchange process. This is summarized in the following

\begin{proposition}\label{prop:one-slice-bound}
Let $\Pp^{N}$ be the law of the interchange process, with $\eta = \eta^N$ denoting the trajectory of the process. Let $\sigma^N \in \mathcal{S}_N$ be a sequence of permutations. For any $t > 0$ we have
\[
\limsup\limits_{N \to \infty} N^{-\gamma}\log \Pp^{N}(\eta_{0}^{-1}\eta_{t} = \sigma_N) \leq - \frac{1}{t} \left(\liminf\limits_{N \to \infty} I(\sigma^N) \right),
\]
where $I(\sigma)$ is the energy of the permutation $\sigma$ defined in \eqref{eq:permutation-energy}.
\end{proposition}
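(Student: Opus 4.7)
The plan is an exponential tilting (Chernoff) argument, with the one-block estimate (Lemma \ref{lm:one-block-superexponential-probability}) supplying the sharp constant. I may assume $\eta_0 = \mathrm{id}$ (so $\eta_0^{-1}\eta_t = \eta_t$) and that $L := \liminf_N I(\sigma^N) > 0$, since the inequality is otherwise trivial. The target rate $-I(\sigma^N)/t$ is exactly what $N$ independent simple random walks of rate $N^\alpha$ would deliver: from $\sum_i (\sigma^N(i)-i)^2 = 2N^3 I(\sigma^N)$ one gets $\sum_i(\sigma^N(i)-i)^2/(2tN^\alpha) = N^\gamma I(\sigma^N)/t$.

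For $\lambda > 0$ to be optimized, introduce the linear observable $F^N(\eta) = \sum_i c_i^N\, x_i(\eta)$ with $c_i^N = \lambda N^\gamma(\sigma^N(i)-i)/D^N$ and $D^N = 2N^3 I(\sigma^N)$, so that $F^N(\eta_t)-F^N(\eta_0) = \lambda N^\gamma$ on the event $\{\eta_t = \sigma^N\}$. Markov's inequality combined with the mean-one exponential martingale \eqref{eq:martingale-exp} for $F^N$ yields
\[
\Pp^N(\eta_t = \sigma^N) \leq e^{-\lambda N^\gamma}\, \E\exp\!\left(\int_0^t e^{-F^N}\Ll e^{F^N}(\eta_s)\, ds\right).
\]
Setting $\Delta_x := c^N_{\eta^{-1}(x)} - c^N_{\eta^{-1}(x+1)}$ and $K_N := \max_i|c_i^N| = O(N^{\gamma-2}) = o(1)$, a Taylor expansion of $e^{-F^N}\Ll e^{F^N} = \tfrac{N^\alpha}{2}\sum_x(e^{\Delta_x}-1)$ absorbs the linear (telescoping) and cubic terms into $o(N^\gamma)$, and from the identity $\sum_x \Delta_x^2 = 2\sum_i(c_i^N)^2 - 2\sum_x c^N_{\eta^{-1}(x)} c^N_{\eta^{-1}(x+1)} + O(K_N^2)$ one arrives at
\[
\int_0^t e^{-F^N}\Ll e^{F^N}(\eta_s)\,ds = \tfrac{N^\alpha t}{2}\sum_i(c_i^N)^2 \;-\; \tfrac{N^\alpha}{2}\!\int_0^t\!\sum_x c^N_{\eta^{-1}_s(x)}\,c^N_{\eta^{-1}_s(x+1)}\,ds + o(N^\gamma),
\]
whose first term equals exactly $\tfrac{t\lambda^2}{4I(\sigma^N)}N^\gamma$.

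The main obstacle is a sharp \emph{lower} bound on the cross sum; the trivial deterministic estimate $|\cdot|\le\sum_i(c_i^N)^2$ would cost a factor of two in the final exponent. Here I apply Lemma \ref{lm:one-block-superexponential-probability} with the bounded rescaled colors $a_i = c_i^N/K_N \in [-1,1]$, $b_i = a_i$, $h_x = a_x a_{x-1}$, $\psi \equiv 1$. Because $\mu^\eta_{x,l}$ is a product measure, $\E_{\mu^\eta_{x,l}}[a_x a_{x-1}] = (\hat a_{x,l})^2 \geq 0$, and the lemma produces an event $\Oo_N$ with $\Pp^N(\Oo_N^c) \leq e^{-CN^\gamma}$ (any $C>0$, on taking $l \to \infty$ after $N \to \infty$) on which
\[
\int_0^t \tfrac{1}{N}\sum_x c^N_{\eta^{-1}_s(x)}\,c^N_{\eta^{-1}_s(x+1)}\,ds \;\geq\; -\delta K_N^2
\]
for any fixed $\delta > 0$. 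Since $N^{\alpha+1}K_N^2 = O(N^\gamma)$, this gives $\int_0^t e^{-F^N}\Ll e^{F^N}\,ds \leq \tfrac{t\lambda^2}{4I(\sigma^N)}N^\gamma + o(N^\gamma)$ on $\Oo_N$. Combined with $\mathbf{1}_{\{\eta_t=\sigma^N\}}\leq e^{-\lambda N^\gamma + F^N(\eta_t) - F^N(\eta_0)}$ and $\E[\mathbb{M}^N_t \mathbf{1}_{\Oo_N}]\leq 1$, this yields $\Pp^N(\eta_t=\sigma^N,\Oo_N) \leq \exp\!\left[\bigl(-\lambda + \tfrac{t\lambda^2}{4I(\sigma^N)}\bigr)N^\gamma + o(N^\gamma)\right]$; the event $\Oo_N^c$ contributes at most $e^{-CN^\gamma}$, negligible once $C$ is taken larger than $\lambda$. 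Optimizing at $\lambda = 2I(\sigma^N)/t$ yields the upper bound $-I(\sigma^N)/t + o(1)$ for $N^{-\gamma}\log\Pp^N(\eta_t=\sigma^N)$, and taking $\limsup_N$ concludes the proof.
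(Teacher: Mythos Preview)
Your proof is correct and follows essentially the same route as the paper's: both tilt by the linear observable $\sum_i c_i x_i(\eta)$ via the exponential martingale \eqref{eq:martingale-exp}, expand to second order, invoke the one-block estimate (your direct appeal to Lemma~\ref{lm:one-block-superexponential-probability} is exactly the content of the paper's Lemma~\ref{lm:upper-bound-one-block}) to show the cross term $\sum_x s_x s_{x+1}$ is nonnegative up to a superexponentially rare event, and then optimize. The only cosmetic difference is that you fix the direction $c_i \propto \sigma^N(i)-i$ from the outset and optimize over a single scalar $\lambda$, whereas the paper carries the full vector $S=(s_1,\dots,s_N)$ and optimizes it at the end; the optimal choices coincide.
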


In other words, the large deviation rate of seeing a permutation $\sigma$ at time $t$ in the interchange process is asymptotically bounded from above by $\frac{1}{t}$ times the energy of the permutation $\sigma$.

To prove Proposition \ref{prop:one-slice-bound} we will employ exponential martingales. The idea is as follows -- if $M_{S}(\eta)$ is a function of the process (depending on some set of parameters $S$) which is a positive mean one martingale, then for any permutation $\sigma \in \mathcal{S}_N$ we can write
\begin{align}\label{eq:upper-bound-ej}
& \Pp^{N}(\eta_{0}^{-1}\eta_{t} = \sigma) = \E (\id_{\{\eta_{0}^{-1}\eta_{t} = \sigma\}} )= \E \left( M_{S}(\eta) M_{S}(\eta)^{-1} \id_{\{\eta_{0}^{-1}\eta_{t} = \sigma\}}\right) \leq \\
& \sup_{ \{\chi_{0}^{-1}\chi_{t} = \sigma\}} M_{S}(\chi)^{-1} \E \left( M_{S}(\eta) \id_{\{\eta_{0}^{-1}\eta_{t} = \sigma\}}\right) \leq \sup_{\{\chi_{0}^{-1}\chi_{t} = \sigma\}} M_{S}(\chi)^{-1}, \nonumber
\end{align}
where the supremum is over all deterministic permutation-valued paths $\chi = (\chi_s, 0 \leq s \leq T)$ satisfying $\chi_{0}^{-1}\chi_{t} = \sigma$ and the last inequality comes from the fact that $M_{S}(\chi)$ is a positive mean one martingale. If $M_{S}$ depends only on the increment $\chi_{0}^{-1}\chi_{t}$, we obtain a particularly simple expression
\[
\Pp^{N}(\eta_{0}^{-1}\eta_{t} = \sigma) \leq M_{S}(\sigma)^{-1}.
\]
We can then optimize over the set of parameters $S$ to obtain a large deviation upper bound. The family of martingales we will use is similar to the one used in analyzing large deviations for a simple random walk.

Fix $t > 0$ and a sequence $S = (s_{1}, \ldots, s_{N})$, with $s_{i} \in \left\{ \frac{-1 + \frac{1}{N}}{t}, \frac{-1 + \frac{2}{N}}{t}, \ldots, \frac{1 - \frac{2}{N}}{t}, \frac{1 - \frac{1}{N}}{t} \right\}$. We will think of $s_i$ as ``velocity'' assigned to the particle $i$. Consider the function
\[
F_{S}(\eta_{t}) = \varepsilon \sum\limits_{i=1}^{N} s_{i} x_{i}(\eta_{t}),
\]
where $x_{i}(\eta_{t})$ is the position of the particle $i$ in the configuration $\eta_{t}$. If $\Ll$ is the generator of the interchange process, given by \eqref{eq:unbiased-generator}, then by the formula \eqref{eq:martingale-exp} for exponential martingales we obtain that
\[
M_{t}^{S} = \exp \left\{ F_{S}(\eta_{t}) - F_{S}(\eta_{0}) - \int\limits_{0}^{t} e^{-F_{S}(\eta_{s})} \Ll e^{F_{S}(\eta_{s})} \, ds \right\}
\]
is a mean one positive martingale with respect to $\Pp^{N}$.

For simplicity we will use the same notation $s_{x}(\eta) = s_{\eta^{-1}(x)}$ as for velocities $v_x$ of particles in the previous sections (with the convention that $i$ denotes labels of particles and $x$ denotes the positions), although bear in mind that now $s_x$ are just parameters, not related in any way to the the biased interchange process considered in the preceding sections. We have
\[
\Ll e^{F_{S}(\eta)} = \frac{1}{2} N^{\alpha} \sum\limits_{x=1}^{N-1} \left( e^{F_{S}(\eta^{x,x+1})} - e^{F_{S}(\eta)}  \right) = \frac{1}{2} N^{\alpha} \sum\limits_{x=1}^{N-1} \left( e^{F_{S}(\eta) + \varepsilon \left[s_{x}(\eta) - s_{x+1}(\eta)\right]} - e^{F_{S}(\eta)}  \right),
\]
so
\[
M_{t}^{S} = \exp \left\{ \varepsilon \sum\limits_{i=1}^{N} s_{i} \left( x_{i}(\eta_{t}) - x_{i}(\eta_{0})\right) - \frac{1}{2} N^{\alpha} \int\limits_{0}^{t} \sum\limits_{x=1}^{N-1} \left( e^{\varepsilon [s_{x}(\eta_{s}) - s_{x+1}(\eta_{s})]} -1\right) ds \right\}.
\]
Expanding up to order $\varepsilon^2$ we get
\begin{align}\label{eq:exponential}
M_{t}^{S} = & \exp \Bigg\{ \varepsilon \sum\limits_{i=1}^{N} s_{i} \left( x_{i}(\eta_{t}) - x_{i}(\eta_{0})\right) - \frac{1}{2} N^{\alpha} \varepsilon \int\limits_{0}^{t} \sum\limits_{x=1}^{N-1} [s_{x}(\eta_{s}) - s_{x+1}(\eta_{s})]  \, ds - \\
& -  \frac{1}{4} N^{\alpha}\varepsilon^2 \int\limits_{0}^{t} \sum\limits_{x=1}^{N-1} \left(s_{x}(\eta_{s}) - s_{x+1}(\eta_{s})\right)^2  \, ds  + O(N^{\alpha+1} \varepsilon^3 ) \Bigg\} \nonumber,
\end{align}
where the constants in the $O(\cdot)$ notation depend on $t$ (which is fixed). Observe that the sum of $s_{x} - s_{x+1}$ telescopes, leaving only terms with $s_{1}$ and $s_{N}$, which are $O(N^{\alpha} \varepsilon) = o(N^{\gamma})$. Rescaling by appropriate powers of $N$ and expressing the exponents in terms of the large deviation exponent $\gamma$ we get
\begin{equation*}
M_{t}^{S} = \exp \Bigg\{ N^{\gamma} \left[ \frac{1}{N}\sum\limits_{i=1}^{N} s_{i} \left( \frac{x_{i}(\eta_{t}) - x_{i}(\eta_{0})}{N} \right) -  \frac{1}{4} \int\limits_{0}^{t} \frac{1}{N}\sum\limits_{x=1}^{N-1} \left(s_{x}(\eta_{s}) - s_{x+1}(\eta_{s})\right)^2  \, ds  \right]+ o(N^{\gamma}) \Bigg\}.
\end{equation*}
Expanding $(s_x - s_{x+1})^2$ we obtain (after adding and subtracting the boundary terms $s_1^2$, $s_N^2$ which are only $o(1)$ after rescaling) twice the sum of $s_{x}^{2}$ and the sum of mixed terms $s_x s_{x+1}$. Since $\sum\limits_{x=1}^N s_x^2 = \sum\limits_{i=1}^N s_i^2 $ does not depend on time, we can write
\begin{align*}
 M_{t}^{S} = & \exp \Bigg\{ N^{\gamma} \Bigg[ \frac{1}{N}\sum\limits_{i=1}^{N} s_{i} \left( \frac{x_{i}(\eta_{t}) - x_{i}(\eta_{0})}{N} \right) -  \frac{t}{2} \left( \frac{1}{N}\sum\limits_{i=1}^{N} s_{i}^{2} \right) + \\
 & + \frac{1}{2} \int\limits_{0}^{t} \frac{1}{N}\sum\limits_{x=1}^{N-1} s_{x}(\eta_{s}) s_{x+1}(\eta_{s}) \, ds + o(1)\Bigg] \Bigg\}.
\end{align*}
As in the proof of the law of large numbers we want to use the one block estimate to get rid of the sum involving correlations between $s_{x}$ for adjacent $x$. This time the correlation term might not be small, since $s_{i}$ are arbitrary, but typically it will be nonnegative, so we can neglect it for the sake of the upper bound. More precisely, we have the following

\begin{lemma}\label{lm:upper-bound-one-block}
Let $\Pp^{N}$ be the law of the interchange process. Fix $t > 0$ and let \\ $s_{i} \in \left\{ \frac{-1}{t}, \frac{-1 + \frac{1}{N}}{t}, \ldots, \frac{1 - \frac{1}{N}}{t}, \frac{1}{t} \right\}$. Then, with notation as above, we have for any $\delta > 0$
\[
\limsup\limits_{N \to \infty} N^{-\gamma} \log \Pp^{N} \left( \int\limits_{0}^{t} \frac{1}{N}\sum\limits_{x=1}^{N-1} s_{x}(\eta_{s}) s_{x+1}(\eta_{s}) \, ds \leq - \delta \right) = - \infty.
\]
\end{lemma}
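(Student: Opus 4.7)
The plan is to apply the superexponential one block estimate of Lemma~\ref{lm:one-block-superexponential-probability} to the nearest-neighbor correlation and then exploit the fact that after the local replacement the resulting product-measure average is automatically nonnegative, so the event $\{\int \le -\delta\}$ becomes essentially impossible on the good event.

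Concretely, I would view the interchange process as the process on the extended configuration space $E'$ by attaching to each particle $i$ the color $a_i := s_i$ (and a trivial $\phi_i$, which plays no role here). The choices $h_x(\eta) = a_x(\eta) a_{x-1}(\eta)$ (so $b_x = a_x$) and $\psi \equiv 1$ fit the framework of Lemma~\ref{lm:one-block-superexponential-probability}, and the only input about the $a_i$'s actually used in the proof of Lemma~\ref{lm:one-block-superexponential-general} is a uniform bound, which holds since $|s_i| \le 1/t$. With $\delta' = \delta/2$ the lemma gives
\[
\limsup_{l\to\infty}\limsup_{N\to\infty} N^{-\gamma}\log \Pp^{N}\!\left(\int_0^t \tfrac{1}{N}\sum_{x=1}^{N} \bigl|s_x(\eta_s) s_{x-1}(\eta_s) - \E_{\mu_{x,l}^{\eta_s}}[a_x a_{x-1}]\bigr|\, ds > \tfrac{\delta}{2}\right) = -\infty.
\]

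The key observation is that $\mu_{x,l}^{\eta}$ is a product measure whose single-site marginals all coincide with the local empirical distribution $\widehat{\mu}_{x,l}^{\eta}$. Consequently $a_x$ and $a_{x-1}$ are i.i.d.\ under $\mu_{x,l}^{\eta}$ and
\[
\E_{\mu_{x,l}^{\eta}}[a_x a_{x-1}] = \bigl(\E_{\mu_{x,l}^{\eta}} a\bigr)^{2} \ge 0.
\]
Therefore on the complement of the bad event above, $\int_0^t \tfrac{1}{N}\sum_{x=1}^{N} s_x(\eta_s) s_{x-1}(\eta_s)\, ds \ge -\delta/2$. Reindexing $x \mapsto x+1$ changes the sum by at most two boundary terms bounded by $1/t^2$, contributing $O(1/N)$ after division by $N$ and integration over $[0,t]$; so for all sufficiently large $N$ the event $\{\int_0^t \tfrac{1}{N}\sum_{x=1}^{N-1} s_x s_{x+1}(\eta_s)\, ds \le -\delta\}$ is contained in the bad event of the one block estimate and inherits its superexponential decay rate, which is exactly the desired conclusion.

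The main (mild) point to verify is that Lemma~\ref{lm:one-block-superexponential-probability} as stated assumes the $a_i$ take values in $I^N_w$ for a continuous $w$ on $[0,1]^2$, while here the $s_i$ are essentially arbitrary in $[-1/t, 1/t]$; however, inspection of the layer-cake decomposition used in the proof of Lemma~\ref{lm:one-block-superexponential-general} shows that only the bound $\|w\|_\infty \le K$ enters, so this is not a substantive obstruction. No new probabilistic ideas beyond the one block estimate itself are required.
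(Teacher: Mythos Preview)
Your proposal is correct and follows essentially the same route as the paper's proof: apply Lemma~\ref{lm:one-block-superexponential-probability} with $a_i = s_i$, $b_x = a_x$, $\psi \equiv 1$, then use that under the product measure $\mu_{x,l}^{\eta}$ the expectation factors as a square and is hence nonnegative, so the event $\{\int \le -\delta\}$ is trapped inside the superexponentially small one-block error event. The paper addresses your final concern about the hypothesis $a_i \in I^N_w$ simply by taking $w(x,\phi) = \frac{2x-1}{t}$, which places the required values of $s_i$ in the range of $w$; your observation that only the uniform bound $\|w\|_\infty$ is actually used in Lemma~\ref{lm:one-block-superexponential-general} is also correct.
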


\begin{proof}
We employ Lemma \ref{lm:one-block-superexponential-probability} with $w(x,\phi) = \frac{2x-1}{t}$, $a_i = s_i$ and $b_{x}(\eta) = a_{x}(\eta)$, in particular $h_{x}(\eta) = s_x(\eta) s_{x-1}(\eta)$. As in the lemma consider $\E_{\mu_{x,l}^{\eta_s}} \left(s_{x}(\eta) s_{x+1}(\eta)\right)$, where $\mu_{x,l}^{\eta_s}$ is the empirical distribution of $a_i$ in a box $\Lambda_{x,l}$. Let us write
\begin{align}\label{eq:sxsx+1}
& \int\limits_{0}^{t} \frac{1}{N}\sum\limits_{x=1}^{N-1} s_{x}(\eta_s) s_{x+1}(\eta_s) \, ds = \\ 
& = \int\limits_{0}^{t} \frac{1}{N}\sum\limits_{x=1}^{N-1} \left( s_{x}(\eta_s) s_{x+1}(\eta_s) - \E_{\mu_{x,l}^{\eta_s}} \left[s_{x}(\eta) s_{x+1}(\eta)\right] \right)\, ds + \int\limits_{0}^{t} \frac{1}{N}\sum\limits_{x=1}^{N-1} \E_{\mu_{x,l}^{\eta_s}} \left[s_{x}(\eta) s_{x+1}(\eta) \right]\, ds. \nonumber
\end{align}
Since under $\mu_{x,l}^{\eta_s}$ the colors are i.i.d. random variables, we have $\E_{\mu_{x,l}^{\eta_s}} \left[s_{x}(\eta) s_{x+1}(\eta)\right] = \left( \E_{\mu_{x,l}^{\eta_{s}}}  s_{x}(\eta)\right )^2 \geq 0$, so the second term on the right hand side of \eqref{eq:sxsx+1} is nonnegative for every $l$. Lemma \ref{lm:one-block-superexponential-probability} guarantees that for any $\delta > 0$
\[
\limsup\limits_{l \to \infty} \limsup\limits_{N \to \infty} N^{-\gamma} \log \Pp^{N} \left( \int\limits_{0}^{t} \frac{1}{N}\sum\limits_{x=1}^{N-1} \left( s_{x}(\eta_s) s_{x+1}(\eta_s) - \E_{\mu_{x,l}^{\eta_s}} \left[s_{x}(\eta) s_{x+1}(\eta)\right] \right)\, ds  \leq - \delta \right) = - \infty.
\]
Since the left hand side of \eqref{eq:sxsx+1} does not depend on $l$, this finishes the proof.
\end{proof}

With this lemma the proof of Proposition \ref{prop:one-slice-bound} is rather straightforward.

\begin{proof}[Proof of Proposition \ref{prop:one-slice-bound}]
Lemma \ref{lm:upper-bound-one-block} implies that for any $a > 0$ there exist sets $\Oo_{N,a}$ such that on $\Oo_{N,a}$ we have
\begin{equation}\label{eq:upper-bound-mt}
M_{t}^{S}(\eta) \geq \exp \Bigg\{ N^{\gamma} \Bigg[ \frac{1}{N}\sum\limits_{i=1}^{N} s_{i} \left( \frac{x_{i}(\eta_{t}) - x_{i}(\eta_{0})}{N} \right) -  \frac{t}{2} \left( \frac{1}{N}\sum\limits_{i=1}^{N} s_{i}^{2} \right) - a + o(1)\Bigg] \Bigg\},
\end{equation}
with the $o(1)$ term depending on $t$, and $\Pp^{N}(\Oo_{N,a}^{c}) \to 0 $ as $N \to \infty$ superexponentially fast
\[
 \limsup_{N \to \infty} N^{-\gamma} \log \Pp^{N}(\Oo_{N,a}^{c}) = - \infty.
\]
Now we can use the strategy outlined earlier with the positive mean one martingale $M_{t}^{S}(\eta)$. We write
\begin{align*}
& \Pp^{N}(\eta_{0}^{-1}\eta_{t} = \sigma^N) = \E \left(\id_{\{\eta_{0}^{-1}\eta_{t} = \sigma^N\}} \right)= \E \left( M_{t}^{S}(\eta)^{-1} M_{t}^{S}(\eta)  \id_{\{\eta_{0}^{-1}\eta_{t} = \sigma^N\}}\right) = \\
& \E \left( M_{t}^{S}(\eta)^{-1} M_{t}^{S}(\eta)  \id_{\{\eta_{0}^{-1}\eta_{t} = \sigma^N \}}\id_{\Oo_{N,a}} \right) + \E \left(  \id_{\{\eta_{0}^{-1}\eta_{t} = \sigma^N \}}\id_{\Oo_{N,a}^{c}} \right).
\end{align*}
On $\Oo_{N,a}$ we can use the bound \eqref{eq:upper-bound-mt} obtained above. Note also that on the event $\{\eta_{0}^{-1}\eta_{t} = \sigma^N\}$ we have $x_{i}(\eta_{t}) - x_{i}(\eta_{0}) = \sigma^N(i) - i$, which together with \eqref{eq:upper-bound-ej} leads us to
\begin{equation}\label{eq:one-slice-intermediate}
\Pp^{N}(\eta_{0}^{-1}\eta_{t} = \sigma^N) \leq e^{-N^{\gamma} \left(I_{S}(\sigma^N) - a + o(1)\right)} + \Pp^{N}(\Oo_{N,a}^{c}),
\end{equation}
where
\[
I_{S}(\sigma^N) = \frac{1}{N}\sum\limits_{i=1}^{N} s_{i} \left( \frac{\sigma^N(i) - i}{N} \right) -  \frac{t}{2} \left( \frac{1}{N}\sum\limits_{i=1}^{N} s_{i}^{2} \right).
\]
To optimize over the choice of $S = (s_1, \ldots, s_N)$, observe that $I_{S}(\sigma^N)$ is quadratic in $s_{i}$, so an easy calculation shows that the optimal choice is
\[
s_{i} = \frac{\sigma^N(i) - i}{t N},
\]
which is valid, since we assumed $s_{i} \in \left\{ \frac{-1 + \frac{1}{N}}{t}, \frac{-1 + \frac{2}{N}}{t}, \ldots, \frac{1 - \frac{1}{N}}{t}\right\}$. This gives the maximal value of $I_{S}(\sigma^N)$ equal to
\[
\frac{1}{2} \left( \frac{1}{N}\sum\limits_{i=1}^{N} \frac{1}{t}\left( \frac{\sigma^N(i) - i}{N} \right)^2 \right),
\]
which is exactly the energy $I(\sigma^N)$ rescaled by $t$. Inserting this into \eqref{eq:one-slice-intermediate} gives us
\[
\Pp^{N}(\eta_{0}^{-1}\eta_{t} = \sigma^N) \leq e^{-N^{\gamma} \left(I(\sigma^N) - a + o(1)\right)} + \Pp^{N}(\Oo_{N,a}^{c})
\]
Since $\limsup\limits_{n \to \infty} \frac{1}{n} \log(a_n + b_n) = \max\{\limsup\limits_{n \to \infty} \frac{1}{n} \log a_n, \limsup\limits_{n \to \infty} \frac{1}{n} \log b_n\}$, we obtain
\[
\limsup\limits_{N \to \infty} N^{-\gamma} \log \Pp^{N}(\eta_{0}^{-1}\eta_{t} = \sigma^N) \leq \max\left\{ - \liminf\limits_{N \to \infty} I(\sigma^N) + a, \limsup\limits_{N \to \infty} N^{-\gamma} \log \Pp^{N} (\Oo_{N,a}^{c}) \right\}.
\]
The second lim sup is $-\infty$ and by taking $a \to 0$ we arrive at
\[
\limsup\limits_{N \to \infty} N^{-\gamma} \log \Pp^{N}(\eta_{0}^{-1}\eta_{t} = \sigma^N) \leq - \liminf\limits_{N \to \infty} I(\sigma^N)
\]
as desired.
\end{proof}

We can readily extend the bound from Proposition \ref{prop:one-slice-bound} to all finite-dimensional distributions of the interchange process as follows. Fix a finite set of times $0 \leq t_0 < t_1 < \ldots < t_k \leq T$ and for clarity of notation let us write $\vec{\eta}_{t_0, \ldots, t_k} = (\eta_{t_{0}}^{-1}\eta_{t_1}, \ldots, \eta_{t_{k-1}}^{-1}\eta_{t_k})$ for the corresponding sequence of increments of $\eta$. Suppose we want to bound the probability $\Pp^{N}(\vec{\eta}_{t_0, \ldots, t_k} = (\sigma^N_{1}, \ldots, \sigma^N_{k}))$, where $(\sigma^N_{1}, \ldots, \sigma^N_{k})$ is a fixed sequence of permutations for each $N$, $\sigma^N_j \in \mathcal{S}_N$. Recall that the interchange process has independent increments, i.e., the permutations $\left(\eta_{t_{j}}\right)^{-1} \eta_{t_{j+1}}$ for any family non-overlapping intervals $[t_{j}, t_{j+1})$ are independent. Therefore we can write
\[
\Pp^{N}(\vec{\eta}_{t_0, \ldots, t_k} = (\sigma^N_{1}, \ldots, \sigma^N_{k})) = \prod\limits_{j=1}^{k} \Pp^{N}(\eta_{t_{j-1}}^{-1} \eta_{t_j} = \sigma^N_{j}).
\]
As the interchange process is stationary, we have $\Pp^{N}(\eta_{t_{j-1}}^{-1} \eta_{t_j} = \sigma^N_{j}) = \Pp^{N}(\eta_{0}^{-1} \eta_{t_{j} - t_{j-1}} = \sigma^N_{j})$. Thus by applying Proposition \ref{prop:one-slice-bound} we obtain 
\begin{equation}\label{eq:many-slices}
\limsup\limits_{N \to \infty} N^{-\gamma}\log \Pp^{N}(\vec{\eta}_{t_0, \ldots, t_k} = (\sigma^N_{1}, \ldots, \sigma^N_{k})) \leq - \liminf\limits_{N \to \infty} \sum\limits_{j=1}^{k} \frac{1}{t_j - t_{j-1}}I(\sigma^N_j).
\end{equation}

Recall that $\mu^{\eta^N}$ denotes the distribution of the random permutation process associated to $\eta^N$ (defined by \eqref{eq:empirical-eta}) and for a finite partition $\Pi$ by $I^{\Pi}(\mu^{\eta^N})$ we denote the approximation of energy of $\mu^{\eta^N}$ associated to $\Pi$ (defined by \eqref{eq:def-energy-fin-dim}). From equation \eqref{eq:many-slices} we obtain the following corollary which will be useful later

\begin{corollary}\label{cor:many-slices-energy}
For any $C > 0$ and any finite partition $\Pi = \{ 0 = t_0 < t_1 < \ldots < t_k = T\}$ we have
\[
\limsup\limits_{N \to \infty} N^{-\gamma}\log \Pp^{N}(I^{\Pi}(\mu^{\eta^{N}}) \geq C) \leq - C.
\]
\end{corollary}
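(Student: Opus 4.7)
The plan is to reduce the event $\{I^\Pi(\mu^{\eta^N}) \geq C\}$ to a disjoint union of cylinder events determined by the values of $\eta^N$ at the partition times and then apply the finite-dimensional bound \eqref{eq:many-slices} together with a combinatorial union bound.

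First I would observe that $I^\Pi(\mu^{\eta^N})$ depends only on the configurations $\eta^N_{t_0}, \eta^N_{t_1}, \ldots, \eta^N_{t_k}$. Denoting by $\sigma_j^N$ the permutation increment of $\eta^N$ on $[t_{j-1}, t_j]$, a change of summation index $\ell = x_i(\eta^N_{t_{j-1}})$ yields
\[
I^\Pi(\mu^{\eta^N}) \;=\; \frac{1}{2N^3}\sum_{j=1}^k \frac{1}{t_j - t_{j-1}} \sum_{\ell=1}^N (\sigma_j^N(\ell) - \ell)^2 \;=\; \sum_{j=1}^k \frac{I(\sigma_j^N)}{t_j - t_{j-1}}.
\]
Consequently, $\{I^\Pi(\mu^{\eta^N}) \geq C\}$ is the disjoint union of the events $\{\vec\eta_{t_0,\ldots,t_k} = \vec\tau\}$ over tuples $\vec\tau = (\tau_1, \ldots, \tau_k) \in (\Ss_N)^k$ satisfying $\sum_{j=1}^k I(\tau_j)/(t_j - t_{j-1}) \geq C$.

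Next I would extract a uniform version of \eqref{eq:many-slices}: inspection of the proof of Proposition \ref{prop:one-slice-bound} shows that every error term depends only on the time parameters and an auxiliary $a > 0$, not on the target permutation. Combined with independence of increments and stationarity, this gives, for each $a > 0$, all sufficiently large $N$, and uniformly over $\vec\tau \in (\Ss_N)^k$,
\[
\Pp^N(\vec\eta_{t_0, \ldots, t_k} = \vec\tau) \;\leq\; \exp\!\Bigl\{-N^\gamma\bigl(\textstyle\sum_j I(\tau_j)/(t_j - t_{j-1}) - a\bigr)\Bigr\} \,+\, e^{-\kappa(a) N^\gamma},
\]
where $\kappa(a)$ can be made arbitrarily large by tightening the ``good event'' from the proof of Proposition \ref{prop:one-slice-bound}. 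Summing over those $\vec\tau$ with total weighted energy at least $C$ and bounding the number of such tuples by $|\Ss_N|^k = (N!)^k = e^{O(N\log N)}$, I obtain
\[
\Pp^N(I^\Pi(\mu^{\eta^N}) \geq C) \;\leq\; (N!)^k \bigl( e^{-N^\gamma(C-a)} + e^{-\kappa(a)N^\gamma}\bigr) \;=\; e^{-N^\gamma(C-a) + o(N^\gamma)},
\]
once $\kappa(a)$ is chosen larger than $C$. Since $\gamma = 3 - \alpha > 1$ (because $\alpha \in (1,2)$), the entropy factor $(N!)^k$ is only $e^{o(N^\gamma)}$; applying $N^{-\gamma}\log$, letting $N \to \infty$ and then $a \to 0$ gives the claim.

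The main delicate point is extracting the uniform-in-$\vec\tau$ estimate above, which is nevertheless straightforward: the permutation energy $I$ is bounded by $1/2$ on $\Ss_N$, and the error terms in the exponential martingale argument depend only on the partition $\Pi$ and on $a$. The competition between the combinatorial entropy $(N!)^k = e^{O(N\log N)}$ and the large deviation scale $e^{-N^\gamma}$ is resolved in favor of the latter precisely because $\alpha < 2$; this is what allows the crude union bound over cylinder sets to succeed at the correct exponential scale.
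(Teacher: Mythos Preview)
Your proof is correct and follows essentially the same route as the paper's. Both arguments identify $I^{\Pi}(\mu^{\eta^N})$ with $\sum_j I(\sigma_j^N)/(t_j-t_{j-1})$ for the increment permutations, take a union bound over the at most $(N!)^k$ admissible tuples, use that $\log(N!)^k = O(N\log N) = o(N^{\gamma})$, and invoke the estimate \eqref{eq:many-slices}; your version is slightly more explicit about the uniformity in $\vec\tau$ of the error terms coming from Proposition~\ref{prop:one-slice-bound}, whereas the paper handles this by picking for each $N$ a maximizing tuple in $A_C^N$ and applying \eqref{eq:many-slices} to that sequence.
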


\begin{proof}
Consider the set $A_{C}^{N}$ of all sequences of permutations $(\sigma^N_1, \ldots, \sigma^N_k)$, $\sigma^N_j \in \mathcal{S}_N$, such that $\sum\limits_{j=1}^{k} \frac{1}{t_j - t_{j-1}}I(\sigma^N_j) \geq C$. By performing a union bound over all such sequences we get
\[
\Pp^{N}(I^{\Pi}(\mu^{\eta^{N}}) \geq C) \leq N!^k \sup\limits_{A_{C}^{N}} \Pp^{N}(\vec{\eta}_{t_0, \ldots, t_k} = (\sigma^N_{1}, \ldots. \sigma^N_{k})),
\]
Now it is enough to observe that for fixed $k$ we have $\log (N!^k) = o(N^\gamma)$ and apply \eqref{eq:many-slices}.
\end{proof}

Now we can proceed to prove a general large deviation upper bound, announced as Theorem \ref{th:theorem-main-upper} in the introduction,. Recall that $\PP \subseteq \MM(\DD)$ denotes the space of all permuton and approximate permuton processes.

\begin{theorem}\label{th:upper-bound}
Let $\Pp^{N}$ be the law of the interchange process $\eta^{N}$ and let $\mu^{\eta^{N}}$ be the (random) distribution of the corresponding random permutation process $X^{\eta^N}$. For any closed set $\mathcal{C} \subseteq \PP$ we have
\[
\limsup_{N \to \infty} N^{-\gamma} \log \Pp^{N}\left(\mu^{\eta^{N}} \in \mathcal{C} \right) \leq - \inf\limits_{\pi \in \mathcal{C}} I(\pi),
\]
where $I(\pi)$ is the energy of the process $\pi$ defined by \eqref{eq:process-energy}.
\end{theorem}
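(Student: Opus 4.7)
The plan is the standard Laplace-type two-step strategy: establish the upper bound first on compact subsets of $\PP$, then extend to closed sets by proving exponential tightness of $\mu^{\eta^N}$. Both steps use Corollary \ref{cor:many-slices-energy} together with Lemma \ref{lm:approximate-energy}.

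For compact $\mathcal{K} \subseteq \PP$ and $\varepsilon > 0$, I would construct for each $\pi \in \mathcal{K}$ an open neighborhood $U_\pi$ satisfying
\[
\limsup_{N \to \infty} N^{-\gamma} \log \Pp^N(\mu^{\eta^N} \in U_\pi) \leq -\bigl(I(\pi) \wedge \varepsilon^{-1}\bigr) + \varepsilon.
\]
By Lemma \ref{lm:approximate-energy}, pick a partition $\Pi$ with $I^\Pi(\pi) \geq (I(\pi) \wedge \varepsilon^{-1}) - \varepsilon/2$, additionally choosing the partition times at continuity points of $\pi$-almost every path (possible since c\`adl\`ag paths have at most countably many jumps). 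With such a choice, $\nu \mapsto I^\Pi(\nu)$ is lower semicontinuous at $\pi$ in the weak topology (by the portmanteau theorem, as $\mathcal{E}^\Pi$ is bounded and $\pi$-a.s.\ continuous), so there is an open $U_\pi$ on which $I^\Pi \geq I^\Pi(\pi) - \varepsilon/2$; then Corollary \ref{cor:many-slices-energy} yields the claimed bound on $\Pp^N(\mu^{\eta^N} \in U_\pi)$. Extracting a finite subcover of $\mathcal{K}$ and applying the union bound, followed by sending $\varepsilon \to 0$, gives the compact-set upper bound.

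For exponential tightness, the natural compact sets are built from dyadic finite-partition energies: with $\Pi_n$ the uniform partition of $[0,T]$ into $2^n$ intervals and thresholds $C_n$ chosen (e.g.\ $C_n = L+n$ for $n$ up to a cutoff $n_*(N)$) so that a union bound over $n$ of Corollary \ref{cor:many-slices-energy} gives $\limsup_N N^{-\gamma} \log \Pp^N(\mu^{\eta^N} \notin K_L) \leq -L$, set
\[
K_L = \bigcap_{n \geq 1} \{\pi \in \PP : I^{\Pi_n}(\pi) \leq C_n\}.
\]
Compactness of $K_L$ in $\MM(\DD)$ is an Aldous-type tightness argument: the bound $I^{\Pi_n}(\pi) \leq C_n$ forces $\sum_i \E_\pi |\gamma(t_i^n) - \gamma(t_{i-1}^n)|^2 \leq 2 C_n T / 2^n$, which via Markov's inequality yields the modulus-of-continuity control needed for path tightness in $\DD$.

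Combining is then routine: for closed $\mathcal{C}$, decompose $\Pp^N(\mu^{\eta^N} \in \mathcal{C}) \leq \Pp^N(\mu^{\eta^N} \in \mathcal{C} \cap K_L) + \Pp^N(\mu^{\eta^N} \notin K_L)$, apply the compact-set bound to the first term (since $\mathcal{C} \cap K_L$ is closed inside the compact $K_L$, hence compact), the tightness bound to the second, and let $L \to \infty$. The main technical obstacles will be (i) lower semicontinuity of $I^\Pi$ at a given $\pi$, since evaluations on $\DD$ at fixed times are not continuous in the Skorokhod topology, forcing partition times to be chosen to avoid $\pi$'s jumps, and (ii) the choice of thresholds $C_n$ in the definition of $K_L$, which must balance the combinatorial factor $(N!)^{2^n}$ coming from the union bound in the proof of Corollary \ref{cor:many-slices-energy} against the desired exponential decay, forcing a cutoff in the number of dyadic scales employed at finite $N$.
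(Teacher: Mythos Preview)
Your overall two-step strategy and your local upper bound are essentially what the paper does. The paper argues the local bound slightly differently --- it projects to finite-dimensional marginals, uses independence of increments to factorize, then applies Proposition~\ref{prop:one-slice-bound} with a union bound over at most $N!$ permutations per slice, and finally invokes continuity of the permuton energy $I(\cdot)$ on $\MM([0,1]^2)$ --- but your route through lower semicontinuity of $I^\Pi$ at $\pi$ and Corollary~\ref{cor:many-slices-energy} is equivalent and arguably cleaner. Your obstacle (i) is real but handled exactly as you suggest.

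The genuine gap is in your exponential tightness, and it is not just obstacle (ii) as you phrase it. The sets $K_L = \bigcap_{n \geq 1}\{I^{\Pi_n}(\pi) \leq C_n\}$ cannot work: since the paths of $\mu^{\eta^N}$ are step functions with jumps of size $1/N$, one has $I^{\Pi_n}(\mu^{\eta^N}) \to \infty$ as $n \to \infty$ \emph{deterministically} for every realization of $\eta^N$. Hence with $C_n$ growing polynomially (e.g.\ $C_n = L+n$) one gets $\Pp^N(\mu^{\eta^N} \notin K_L) = 1$, not $\leq e^{-LN^\gamma}$. Letting $C_n$ grow fast enough to admit $\mu^{\eta^N}$ (essentially $C_n \gtrsim 2^n$) destroys the modulus-of-continuity control you need for compactness. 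Your proposed cutoff $n_*(N)$ does not help either: truncating the intersection to finitely many $n$ makes $K_L$ depend on $N$ and, more importantly, leaves path behavior at finer scales unconstrained, so the set is no longer relatively compact in $\MM(\DD)$.

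The paper's Proposition~\ref{prop:exp-tightness} resolves this by building the compact sets directly from the c\`adl\`ag modulus $w''_\delta$ (plus boundary terms), which accommodates jump paths naturally. The link to energy is then made through a multi-scale argument: a path that is ``bad exactly at scale $k$'' (large $w''_{\delta_k}$ but small $w''_{\delta_{k+1}}$) is shown, via a triangle-inequality and Cauchy--Schwarz step, to have large $I^{\Pi}$ for the partition at mesh $\delta_{k+1}$, and Corollary~\ref{cor:many-slices-energy} then bounds the probability of many such paths. This two-scale comparison is precisely what lets one control the full modulus using only finite-partition energies, and it is the missing ingredient in your sketch.
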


\begin{proof}
It is standard (see, e.g., \cite[Lemma 2.3]{varadhan}) that the large deviation upper bound for closed sets follows from a local upper bound for open balls and exponential tightness of the sequence $\mu^{\eta^{N}}$. The exponential tightness part will be proved in Proposition \ref{prop:exp-tightness} below, so here we focus on the first part, that is, we will prove that for any $\pi \in \PP$ we have
\begin{equation}\label{eq:ld-ball}
\limsup_{\varepsilon \to 0} \limsup_{N \to \infty} N^{-\gamma} \log \Pp^{N}\left(\mu^{\eta^{N}} \in B(\pi, \varepsilon) \right) \leq - I(\pi),
\end{equation}
where $B(\pi, \varepsilon)$ denotes the open $\varepsilon$-ball around $\pi$ in the Wasserstein distance $d_{\mathcal{W}}$ on $\PP$.

Fix a finite set of times $0 = t_{0} < t_{1} < \ldots < t_{k} = T$. Since almost surely the interchange process does not make jumps at any of the prescribed times $t_0, t_1, \ldots, t_k$, by continuity of projections for any $\varepsilon > 0$ there exists $\varepsilon' > 0$ such that
\begin{equation}\label{eq:upper-bound-projection}
\Pp^{N}\left( d_{\mathcal{W}}(\mu^{\eta^{N}}, \pi) < \varepsilon' \right) \leq \Pp^{N} \left(d(\mu^{\eta^{N}}_{t_{0}, t_{1}}, \pi_{t_0, t_1}) < \varepsilon) \wedge \ldots \wedge d(\mu^{\eta^{N}}_{t_{k-1}, t_{k}},\pi_{t_{k-1}, t_{k}}) < \varepsilon \right),
\end{equation}
where $d$ denotes the Wasserstein distance on $\MM([0,1]^2)$. Furthermore, note that the permutation process with distribution $\mu^{\eta^{N}}$ has independent increments, i.e., the permutations $\left(\eta_{t_{j}}^{N}\right)^{-1} \eta_{t_{j+1}}^{N}$ for any family non-overlapping intervals $[t_{j}, t_{j+1})$ are independent. Thus we can write
\begin{align}\label{eq:upper-bound-product}
\Pp^{N} \left(d(\mu^{\eta^{N}}_{t_{0}, t_{1}}, \pi_{t_0, t_1}) < \varepsilon) \wedge \ldots \wedge d(\mu^{\eta^{N}}_{t_{k-1}, t_{k}},\pi_{t_{k-1}, t_{k}}) < \varepsilon \right) = \prod\limits_{i=0}^{k-1} \Pp^{N} \left( d(\mu^{\eta^{N}}_{t_{i}, t_{i+1}}, \pi_{t_{i}, t_{i+1}}) < \varepsilon)\right).
\end{align}

In this way we have reduced the problem to bounding the probability that the random measure $\mu^{\eta^{N}}_{t_{i}, t_{i+1}}$ is close to a fixed permuton $\pi_{t_{i}, t_{i+1}}$.

Fix $i$ and consider all permutations $\sigma \in \mathcal{S}_{N}$ such that the empirical measure $\mu_{\sigma}$ satisfies $d(\mu_{\sigma}, \pi_{t_{i}, t_{i+1}}) < \varepsilon$. As there are at most $N!$ such permutations, by performing a union bound over this set we obtain
\[
\Pp^{N} \left( d(\mu^{\eta^{N}}_{t_{i}, t_{i+1}}, \pi_{t_{i}, t_{i+1}}) < \varepsilon)\right) \leq N! \sup_{ \substack{\sigma \in \mathcal{S}_{N} \\ d(\mu_{\sigma}, \pi_{t_{i}, t_{i+1}}) < \varepsilon}} \Pp^{N} \left(\mu^{\eta^{N}}_{t_{i}, t_{i+1}} = \mu_{\sigma}\right),
\]
where on the right hand side we have the probability that the random measure $\mu_{t_i, t_{i+1}}^{\eta^{N}}$ is equal to $\mu_{\sigma}$. This probability is simply equal to $\Pp^{N} \left( \left(\eta_{t_i}^{N}\right)^{-1} \eta_{t_{i+1}}^{N} = \sigma)\right)$ and by stationarity of the interchange process this is the same as $\Pp^{N} \left( \left(\eta_{0}^{N}\right)^{-1} \eta_{t_{i+1} - t_i}^{N} = \sigma)\right)$.
By employing Proposition \ref{prop:one-slice-bound}, with $\sigma^N \in \mathcal{S}_N$ being any permutation attaining the supremum above, and noticing that $\log N! = o(N^{\gamma})$ we get
\[
\limsup_{N \to \infty} N^{-\gamma} \log \Pp^{N} \left( d(\mu^{\eta^{N}}_{t_i, t_{i+1}}, \pi_{t_{i}, t_{i+1}}) < \varepsilon)\right) \leq \limsup_{N \to \infty} \left(- \frac{1}{t_{i+1} - t_i} I(\sigma^N)\right).
\]
Now observe that for any $\sigma$ such that $d(\mu_{\sigma}, \pi_{t_{i}, t_{i+1}}) < \varepsilon$ the energy $I(\sigma) = I(\mu_{\sigma})$ has to be close to $I(\pi_{t_{i}, t_{i+1}})$, the energy of the permuton $\pi_{t_i, t_{i+1}}$ (recall definition \ref{eq:permuton-energy-def}), since $I$ is continuous in the weak topology on $\MM([0,1]^2)$. Thus upon taking $\varepsilon \to 0$ we obtain
\[
\limsup_{\varepsilon \to 0} \limsup_{N \to \infty} N^{-\gamma} \log \Pp^{N} \left(d(\mu^{\eta^{N}}_{t_i, t_{i+1}}, \pi_{t_{i}, t_{i+1}}) < \varepsilon \right) \leq -\frac{1}{t_{i+1} - t_{i}} I(\pi_{t_{i}, t_{i+1}}) .
\]
Applying this estimate to the product in \eqref{eq:upper-bound-product} and observing that in \eqref{eq:upper-bound-projection} without loss of generality we can assume $\varepsilon' \leq \varepsilon$, we arrive at the following bound
\begin{align*}
& \limsup_{\varepsilon \to 0} \limsup_{N \to \infty} N^{-\gamma} \log \Pp^{N}\left(d_{\mathcal{W}}(\mu^{\eta^{N}}, \pi) < \varepsilon \right) \leq -\sum\limits_{i=0}^{k-1} \frac{1}{t_{i+1} - t_{i}} I(\pi_{t_{i}, t_{i+1}}).
\end{align*}
Since $t_0, t_1, \ldots, t_k$ were arbitrary, by optimizing over all finite partitions $\Pi = \{ 0 = t_{0} < t_{1} < \ldots < t_{k} = T \}$ we obtain
\[
\limsup_{\varepsilon \to 0} \limsup_{N \to \infty} N^{-\gamma} \log \Pp^{N}\left(d_{\mathcal{W}}(\mu^{\eta^{N}}, \pi) < \varepsilon \right) \leq - \sup_{\Pi} \sum\limits_{i=0}^{k-1} \frac{1}{t_{i+1} - t_{i}} I(\pi_{t_{i}, t_{i+1}}).
\]
Recalling the definitions \eqref{eq:energy-path-def}, \eqref{eq:process-energy} and \eqref{eq:def-energy-fin-dim}, to prove \eqref{eq:ld-ball} it remains to show that we have $I(\pi) = \sup\limits_{\Pi} I^{\Pi}(\pi)$, which is exactly the statement of Lemma \ref{lm:approximate-energy}.
\end{proof}

\begin{proposition}\label{prop:exp-tightness}
The family of measures $\mu^{\eta^{N}}$ is exponentially tight, that is, there exists a sequence of compact sets $K_{m} \subseteq \PP$ such that
\[
\limsup_{N \to \infty} N^{-\gamma} \log \Pp^{N} (\mu^{\eta^{N}} \notin K_{m}) \leq - m.
\]
\end{proposition}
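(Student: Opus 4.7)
The natural strategy is to construct compact $K_m \subseteq \PP$ as intersections of sublevel sets of the finite-dimensional energy approximations $I^{\Pi_n}$, using Corollary~\ref{cor:many-slices-energy} to supply the needed probability estimates. Let $\Pi_n = \{jT/2^n : 0 \leq j \leq 2^n\}$ be the dyadic partition with $2^n$ equal pieces, and set
\[
K_m = \bigcap_{n=1}^\infty \{\pi \in \PP : I^{\Pi_n}(\pi) \leq C_{m,n}\}
\]
for thresholds $C_{m,n}$ (e.g.\ $C_{m,n} = m+n$) chosen to balance the probability bound with compactness.

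\textbf{Probability estimate.} By Corollary~\ref{cor:many-slices-energy}, for each fixed $n$,
\[
\limsup_{N \to \infty} N^{-\gamma}\log \Pp^N\bigl(I^{\Pi_n}(\mu^{\eta^N}) > C_{m,n}\bigr) \leq -C_{m,n}.
\]
A union bound over $n$ together with the summability of $\sum_n e^{-n N^\gamma} \leq 2 e^{-N^\gamma}$ for $N$ large would then yield $\limsup_N N^{-\gamma}\log\Pp^N(\mu^{\eta^N} \notin K_m) \leq -m$.

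\textbf{Compactness of $K_m$.} Each set $\{I^{\Pi_n}\leq C_{m,n}\}$ is closed in $\PP$ by lower semicontinuity of $I^{\Pi_n}$ (inherited from Fatou's lemma together with weak continuity of the finite-dimensional projections at generic times), so $K_m$ is closed. For tightness in $\MM(\DD)$, Markov's inequality applied to $\pi \in K_m$ gives $\pi(\EE^{\Pi_n}(\gamma) > \lambda_n) \leq C_{m,n}/\lambda_n$, and on the complement each dyadic increment at scale $T/2^n$ is at most $\sqrt{2\lambda_n T/2^n}$. Taking $\lambda_n = C_{m,n}\,2^{n/2}$ makes the failure probability summable in $n$ and the increment bound of order $\sqrt{2TC_{m,n}}\cdot 2^{-n/4}$, which vanishes as $n \to \infty$; a standard chaining argument (Garsia--Rodemich--Rumsey type) along dyadic scales then produces a uniform modulus of continuity for paths sampled from $\pi$, giving tightness and hence compactness in $\MM(\DD)$.

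\textbf{Main obstacle.} The delicate point is that approximate permuton processes $\mu^{\eta^N}$ are supported on c\`adl\`ag paths with jumps of size $1/N$, so $I^{\Pi_n}(\mu^{\eta^N})$ grows like $2^n N^{\alpha-2}$ at fine scales and exceeds $m+n$ once $2^n \gtrsim (m+n) N^{2-\alpha}$. Thus the naive $K_m$ above fails to contain $\mu^{\eta^N}$ beyond the threshold $n_0(N) \sim (2-\alpha)\log_2 N$, and correspondingly the error term in Corollary~\ref{cor:many-slices-energy} (which is effectively driven by a $N!^{|\Pi_n|}$ union bound) only remains negligible for $n \lesssim \log N$. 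Reconciling the fine-scale compactness requirement with the coarse-scale effectiveness of the probability bound is the main step: one option is to truncate the intersection defining $K_m$ at $n_0(N)$ and control oscillations of $\mu^{\eta^N}$ at sub-$n_0(N)$ scales separately via Poisson concentration on the number of swaps in short intervals; another is to adjoin to $K_m$ an auxiliary constraint such as a bound $J(\pi)\leq\rho_m$ on the maximum jump size with $\rho_m\to 0$, so that $K_m$ simultaneously captures the Dirichlet-energy structure of true permuton processes and the asymptotic continuity of $\mu^{\eta^N}$.
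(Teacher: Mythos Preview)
Your proposal identifies the right ingredients (the energy bound of Corollary~\ref{cor:many-slices-energy}) but does not close the gap you yourself flag, and that gap is genuine. The set $K_m = \bigcap_n \{I^{\Pi_n}\le C_{m,n}\}$ essentially forces sample paths to be continuous: any $\pi$ supported on paths with a jump has $I^{\Pi_n}(\pi)\to\infty$ along dyadic refinements, so for each fixed $N$ the measure $\mu^{\eta^N}$ is deterministically outside $K_m$ once $n$ is large. Correspondingly, your probability estimate cannot hold for all $n$: the $o(N^\gamma)$ error in Corollary~\ref{cor:many-slices-energy} comes from a $N!^{\,|\Pi_n|}$ union bound, which is negligible only for $|\Pi_n|=o(N^{\gamma-1}/\log N)$. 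Your two suggested fixes do not work as stated: truncating the intersection at $n_0(N)$ produces an $N$-dependent set, contrary to what exponential tightness requires; and your chaining argument yields a \emph{uniform} modulus of continuity, which is the wrong compactness criterion in $\DD$ and again excludes the jump paths carried by $\mu^{\eta^N}$.

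The paper avoids this by working from the outset with the Skorokhod modulus $w''_\delta$, which is the correct object for c\`adl\`ag paths with small jumps. The compact sets are of the form $K_m=\bigcap_r\{\mu:\mu(K_{m,r})\ge 1-1/r\}$ with $K_{m,r}=\bigcap_k\{w''_{\delta_k(m,r)}(f)\le 4^{-k}\}$. The key device is to decompose bad paths by the \emph{coarsest} scale $k$ at which they fail: if $f$ is bad exactly at scale $k$ (i.e.\ $w''_{\delta_k}(f)>4^{-k}$ but $w''_{\delta_{k+1}}(f)\le 4^{-(k+1)}$), then one extracts a single interval of length $\delta_k$ on which $f$ moves by $\gtrsim 4^{-k}$, and the scale-$(k{+}1)$ regularity lets one convert this, via Cauchy--Schwarz on the subintervals of length $\delta_{k+1}$, into a lower bound on $\EE^{\Pi}(f)$ for the \emph{single fixed} partition $\Pi$ at mesh $\delta_{k+1}$. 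Thus each scale $k$ invokes Corollary~\ref{cor:many-slices-energy} with one partition of bounded size, and the geometric weights $2^{-k}$ on the fraction of bad particles make the sum over $k$ converge. The parameters $\delta_k(m,r)$ are then chosen (depending on $m,r,k$ but not on $N$) to force the resulting energy threshold above $2mrk$. This ``exact scale'' decomposition is precisely what decouples the compactness requirement (which needs all scales) from the probability estimate (which, for each $k$, uses only one fixed partition).
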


\begin{proof}

The idea of the proof is to show that having many particles whose trajectories have poor modulus of continuity (which would spoil compactness) necessarily implies the process having high energy, which by Corollary \ref{cor:many-slices-energy} is unlikely.

Recall that $\DD = \DD([0,T], [0,1])$ is the space of all c\`adl\`ag paths from $[0,T]$ to $[0,1]$. It will be convenient to work with the following notion of c\`adl\`ag modulus of continuity -- for a path $f \in \DD$ we define
\[
w_{\delta}''(f) = \sup_{\substack{t_1 \leq t \leq t_2 \\ t_2 - t_1 \leq \delta}} \left\{ |f(t) - f(t_1)| \wedge |f(t_2) - f(t)| \right\}.
\]
By a characterization of compactness in the Skorokhod space (\cite[Theorem 12.4]{billingsley}) a set $A \subseteq \DD$ has compact closure if and only if the following conditions hold
\[
\begin{cases}
\sup\limits_{f \in A} \sup\limits_{t \in [0,T]} |f(t)| < \infty, \\
\lim\limits_{\delta \to 0} \sup\limits_{f \in A} w_{\delta}''(f) = 0, \\
\lim\limits_{\delta \to 0} \sup\limits_{f \in A} |f(\delta) - f(0)| = 0, \\
\lim\limits_{\delta \to 0} \sup\limits_{f \in A} |f(T-) - f(T-\delta)| = 0.
\end{cases}
\]
In our setting the first condition is trivially satisifed. To exploit the other conditions let us introduce for any $m,r \geq 1$ the following sets
\begin{align*}
& K^{w}_{m,r} = \bigcap\limits_{k \geq 1}\left\{ f \in \DD \, \big\vert \, w_{\delta_{k}(m,r)}''(f) \leq \varepsilon_{k} \right\}, \\
& K^{0}_{m,r} = \bigcap\limits_{k \geq 1}\left\{ f \in \DD \, \big\vert \, |f(\delta_{k}(m,r)) - f(0)| \leq \varepsilon_{k} \right\}, \\
& K^{T}_{m,r} = \bigcap\limits_{k \geq 1}\left\{ f \in \DD \, \big\vert \, |f(T-) - f(T - \delta_{k}(m,r))| \leq \varepsilon_{k} \right\},
\end{align*}
and
\[
K_{m,r} = K^{w}_{m,r} \cap K^{0}_{m,r} \cap K^{T}_{m,r},
\]
where $\varepsilon_{k} = 4^{-k}$ and $\delta_{k}(m,r)$ will be appropriately chosen later. We will assume that for fixed $m,r$ we have $\lim\limits_{k \to \infty} \delta_{k}(m,r) = 0$ and that for any $k \geq 1$ both $\frac{T}{\delta_k}$ and $\frac{\delta_{k}(m,r)}{\delta_{k+1}(m,r)}$ are integer (the latter assumption is for simplicity of notation only). Note that by the aforementioned compactness conditions each set $K_{m,r}$ has compact closure in $\DD$.

Let
\[
K_m = \bigcap\limits_{r \geq 1} \left\{ \mu \in \MM(\DD) \, \bigg\vert \, \mu(K_{m,r}) \geq 1 - \frac{1}{r} \right\}.
\]
We claim that $K_m$ has compact closure in $\MM(\DD)$. Indeed, by Prokhorov's theorem it is enough to prove that $K_m$ is tight. If $\mu \in K_m$, then for any $r \geq 1$ we have $\mu(K_{m,r}^{c}) \leq \frac{1}{r}$, so the sets $K_{m,r}$ form the family of compact sets needed for tightness of $K_m$.

The sets $K_m$ (possibly after taking their closures) will form the family of compact sets needed for exponential tightness. Thus our goal is to bound $\Pp^{N}(\mu^{\eta^{N}} \notin K_{m})$. Let us write
\[
\Pp^{N}(\mu^{\eta^{N}} \notin K_{m}) = \Pp^{N}\left(\exists_{r \geq 1} \, \, \mu^{\eta^{N}} (K_{m,r}^{c}) \geq \frac{1}{r} \right) \leq \sum\limits_{r \geq 1} \Pp^{N}\left(\mu^{\eta^{N}} (K_{m,r}^{c}) \geq \frac{1}{r}\right).
\]
It is enough to show that for any $m,r \geq 1$ and any $N \geq 1$ we have
\begin{equation}\label{eq:exp-tightness-m}
\Pp^{N}\left(\mu^{\eta^{N}} (K_{m,r}^{c}) \geq \frac{1}{r} \right) \leq C e^{-mr N^{\gamma}},
\end{equation}
where $C > 0$ is some global constant.

For any given $m$ and $r$, observe that $\mu^{\eta^{N}} (K_{m,r}^{c}) \geq \frac{1}{r}$ means that in $\eta^N$ we have at least $\frac{N}{r}$ particles with paths $f \notin K_{m,r}$. Since $K_{m,r} = K^{w}_{m,r} \cap K^{0}_{m,r} \cap K^{T}_{m,r}$, clearly it is enough to estimate separately the probabilities that at least $\frac{N}{3r}$ particles have paths respectively not in $K^{w}_{m,r}$, $K^{0}_{m,r}$ or $K^{T}_{m,r}$. The argument for $K^{0}_{m,r}$ and $K^{T}_{m,r}$ is much simpler, so we skip it and concentrate only on the case of $K^{w}_{m,r}$. For simplicity we will write $\alpha(r) = \frac{1}{3r}$

For fixed $m$ and $r$ we will call a path $f$ \emph{bad} if $w_{\delta_{k}(m,r)}''(f) > \varepsilon_{k}(m,r)$ for some $k \geq 1$. We will call $f$ \emph{bad exactly at scale $k$} if $w_{\delta_{k}(m,r)}''(f) > \varepsilon_{k}(m,r)$, but $w_{\delta_{j}(m,r)}''(f) \leq \varepsilon_{j}(m,r)$ for all $j \geq k+1$. Recalling the definition of the set $K^{w}_{m,r}$, the event whose probability we would like to bound is
\[
A_{N}^{m,r} = \left\{ \mbox{there exist} \geq \alpha(r) N \mbox{ particles with bad paths} \right\}.
\]
Consider now the events
\[
B_{N}^{m,r,k} = \left\{ \mbox{there exist} \geq \frac{\alpha(r)}{2^k} N \mbox{ particles whose paths are bad exactly at scale $k$} \right\}.
\]
Note that if $f$ is a bad path with jumps of fixed size $\frac{1}{N}$, then there exists $k \geq 1$ such that $f$ is bad exactly at scale $k$ (since all paths we are considering are c\'{a}dl\`{a}g). Thus we have $A_{N}^{m,r} \subseteq \bigcup\limits_{k \geq 1} B_{N}^{m,r,k}$, so 
\[
\Pp^{N}\left(\mu^{\eta^{N}} ((K_{m,r}^{w})^c) \geq \alpha(r)\right) = \Pp^{N}\left(A_{N}^{m,r}\right) \leq \sum\limits_{k \geq 1} \Pp^{N}(B_{N}^{m,r,k}).
\]
Thus it is enough to show that for any $m,r,k \geq 1$ and any $N \geq 1$ we have
\begin{equation}\label{eq:bound-on-b}
\Pp^{N}(B_{N}^{m,r,k}) \leq e^{-mrk N^\gamma}.
\end{equation}

From now on we fix $m,r,k$ and $N$. All paths we are considering are assumed to come from the interchange process $\eta^N$, in particular they have jumps of fixed size $\frac{1}{N}$. For the sake of brevity we will simply write $\delta_k = \delta_k(m,r)$.

Let us divide the interval $[0,T]$ into $J = \frac{T}{\delta_k}$ intervals of the form $[j \delta_k, (j+1)\delta_k]$, $j=0, \ldots, J-1$. Consider any path $f$ which is bad exactly at scale $k$. The condition $w_{\delta_{k}}''(f) > \varepsilon_{k}$ implies that for some $t, t_1, t_2$ such that $t ,t_2 \in [t_1, t_1 + \delta_k]$ we have $|f(t) - f(t_1)| > \varepsilon_k$ and $|f(t_2) - f(t)| > \varepsilon_k$. A simple application of the triangle inequality implies that there exists $j \in \{0, \ldots, J-1 \}$ and $t' \in [j \delta_k, (j+1)\delta_k]$ such that $|f(j\delta_k) - f(t')| > \frac{\varepsilon_k}{2}$.

Let us consider the interval $[s, s'] = [j \delta_k, (j+1)\delta_k]$ obtained above. Subdivide it into $L = \frac{\delta_k}{\delta_{k+1}}$ intervals of the form $[s_{\ell}, s_{\ell+1}]$, $\ell = 0, \ldots, L-1$, where $s_{\ell} = s + \ell \delta_{k+1}$. For $\ell=0, \ldots, L-1$ let $\Delta_{\ell}(f) = |f(s_{\ell}) - f(s_{\ell+1})|$.

The crucial observation is that $\sum\limits_{\ell=0}^{L-1} \Delta_{\ell}(f) > \frac{\varepsilon_k}{4}$. To see this, consider $t'$ such that $|f(s) - f(t')| > \frac{\varepsilon_k}{2}$, obtained above, and let $\tilde{\ell}$ be such that $t' \in (s_{\tilde{\ell}}, s_{\tilde{\ell} + 1}]$. By the triangle inequality we have
\[
\frac{\varepsilon_{k}}{2} < |f(s) - f(t')| \leq \sum\limits_{\ell=0}^{\tilde{\ell} - 2} \Delta_{\ell}(f) + |f(s_{\tilde{\ell}}) - f(t')|.
\]
Since $f$ is bad exactly at scale $k$, we have $w_{\delta_{k+1}}''(f) \leq \varepsilon_{k+1}$, which together with $|s_{\tilde{\ell}} - t'| \leq \delta_{k+1}$ implies $|f(s_{\tilde{\ell}}) - f(t')| \leq \varepsilon_{k+1} = 4^{-(k+1)} = \frac{\varepsilon_{k}}{4}$. Thus necessarily $\sum\limits_{\ell=0}^{\tilde{\ell} - 2} \Delta_{\ell}(f) > \frac{\varepsilon_k}{4}$. From this we obtain
\begin{equation}\label{eq:sum-of-delta}
\frac{\varepsilon_k^2}{16} < \left( \sum\limits_{\ell=0}^{L-1} \Delta_{\ell}(f) \right)^2 \leq L \sum\limits_{\ell=0}^{L-1} \Delta_{\ell}(f)^2,
\end{equation}
where the right-hand side estimate follows from the Cauchy-Schwarz inequality.

Now let us suppose that the event $B_{N}^{m,r,k}$ holds. Then there exist at least $\frac{\alpha(r)}{2^k}N$ paths $f_i$ for which the estimate \eqref{eq:sum-of-delta} holds. Consider the partition $\Pi = \{0 = t_0 < t_1 < \ldots < t_n = T \}$ where $n = \frac{T}{\delta_{k+1}}$, $t_j = j \delta_{k+1}$ for $j=0, \ldots, n$. Recalling that $f_i = \frac{1}{N}\eta^{N}(i)$, the definition of $\Delta_{\ell}(f)$ and the definition \eqref{eq:def-energy-fin-dim} of the energy $I^{\Pi}(\mu^{\eta^{N}})$ we obtain that on $B_{N}^{m,r,k}$ we have
\begin{align*}
I^{\Pi}(\mu^{\eta^{N}}) =  &\frac{1}{N} \sum\limits_{i=1}^{N} \left( \frac{1}{2} \sum\limits_{j=1}^{n} \frac{| f_{i}(t_{j}) - f_{i}(t_{j-1}) |^2}{t_{j} - t_{j-1}} \right) = \\
& \frac{1}{N} \sum\limits_{i=1}^{N} \left(\frac{1}{2\delta_{k+1}} \sum\limits_{j=1}^{n} | f_{i}(t_{j}) - f_{i}(t_{j-1}) |^2 \right) >
\frac{1}{N} \cdot \frac{\alpha(r)}{2^k}N \cdot \left( \frac{1}{2\delta_{k+1}} \frac{\varepsilon_k^2}{16 L} \right) = \\
& \frac{\alpha(r)}{2^{k+5}} \frac{1}{\delta_{k+1}} \frac{\varepsilon_k^2}{ \frac{\delta_k}{\delta_{k+1}}} = \frac{\varepsilon_k^2}{\delta_k} \frac{\alpha(r)}{2^{k+5}}.
\end{align*}
Writing again $\delta_k = \delta_k(m,r)$, we have thus obtained the bound
\[
\Pp^{N}(B_{N}^{m,r,k}) \leq \Pp^{N}\left( I^{\Pi}(\mu^{\eta^{N}}) \geq \frac{\varepsilon_k^2}{\delta_k(m,r)} \frac{\alpha(r)}{2^{k+5}}\right).
\]
Recalling $\varepsilon_k = 4^{-k}$, $\alpha(r) = \frac{1}{3r}$, we see that to prove \eqref{eq:bound-on-b} it is sufficient to take $\delta_k(m,r)$ small enough so that
\[
\frac{4^{-2k}}{\delta_k(m,r)} \frac{1}{3r 2^{k+5}} \geq 2 mrk.
\]
By applying Corollary \ref{cor:many-slices-energy} we obtain that
\[
\Pp^{N}(B_{N}^{m,r,k}) \leq \Pp^{N}\left( I^{\Pi}(\mu^{\eta^{N}}) \geq 2mrk \right)\leq e^{-mrkN^{\gamma}}
\]
for $N$ large enough. By taking $\delta_k(m,r)$ even smaller if necessary we can make this estimate true for all values of $N \geq 1$, which proves \eqref{eq:bound-on-b} and finishes the proof of exponential tightness.
\end{proof}

\end{section}

\begin{section}{Asymptotics of relaxed sorting networks}\label{sec:asymptotics}

In this section we prove the limiting behavior of random relaxed sorting networks, given by Theorem \ref{th:lln-for-relaxed-networks}, and the asymptotic counting formula of Theorem \ref{th:asymptotics-relaxed}. With the large deviation bounds obtained in the preceding sections both of the proofs are now rather straightforward.

\begin{proof}[Proof of Theorem \ref{th:lln-for-relaxed-networks}]
Let $\mathcal{R} \subseteq \PP$ be the set of permuton processes $X$ reaching exactly the reverse permuton at time $1$, i.e., such that $(X_0, X_1) \sim (X, 1 - X)$, and likewise let $\mathcal{R}_N$ be the set of permutation processes on $N$ elements reaching exactly the reverse permutation $\mathrm{rev}_N = (N \, \ldots \, 2 \, 1)$ at time $1$. Let $\mathcal{R}_{\delta}$ denote the $\delta$-neighborhood in the Wasserstein distance on $\PP$ of the set $\mathcal{R} \cup \bigcup\limits_{N \geq 1} \mathcal{R}_N$.

Let $\eta^N$ be the interchange process with $\alpha = 1 + \kappa \in (1,2)$ and let $\mu^{\eta^N}$ be the distribution of the corresponding permutation process. By definition of a random relaxed sorting network, for any given $\delta > 0$ we have for sufficiently large $N$
\[
\Pp^N \left( \pi^{N}_{\delta} \in B(\pi_{\mathcal{A}}, \varepsilon) \right) = \Pp^N \left( \mu^{\eta^N} \in B(\pi_{\mathcal{A}}, \varepsilon) \big\vert \mu^{\eta^N} \in \mathcal{R}_{\delta} \right).
\]
Now, we have
\[
\Pp^N \left( \mu^{\eta^N} \notin B(\pi_{\mathcal{A}}, \varepsilon) \big\vert \mu^{\eta^N} \in \mathcal{R}_{\delta} \right) = \frac{1}{\Pp^N\left( \mu^{\eta^N} \in \mathcal{R}_{\delta} \right)}\Pp^N \left( \left\{ \mu^{\eta^N} \notin B(\pi_{\mathcal{A}}, \varepsilon) \right\} \cap \left\{ \mu^{\eta^N} \in \mathcal{R}_{\delta} \right\} \right).
\]
By the large deviation lower bound of Theorem \ref{thm:lower-bound-for-minimizers} we have
\[
\Pp^N\left( \mu^{\eta^N} \in \mathcal{R}_{\delta} \right) \geq \exp \left\{-N^\gamma \left( I(\pi_{\mathcal{A}}) + o(1) \right) \right\},
\]
where $\gamma = 3 - \alpha$.

Let $\mathcal{C}_{\varepsilon, \delta} = B(\pi_{\mathcal{A}}, \varepsilon)^{c} \cap \overline{\mathcal{R}_{\delta}}$. By the large deviation upper bound of Theorem \ref{th:upper-bound} we have
\[
\Pp^N \left( \mu^{\eta^N} \in \mathcal{C}_{\varepsilon, \delta} \right) \leq \exp \left\{ -N^\gamma \left( \inf\limits_{\mu \in \mathcal{C}_{\varepsilon, \delta}} I(\mu) + o(1)\right) \right\}.
\]
Since $\pi_{\mathcal{A}}$ is the unique minimizer of energy on $\mathcal{R}$ (\cite{brenier}, \cite{mustazee}), given $\varepsilon > 0$ there exists $\beta = \beta(\varepsilon) > 0$ such that
\begin{equation}\label{eq:infimum-sine}
\inf\limits_{\mu \in B(\pi_{\mathcal{A}}, \varepsilon)^c \cap \mathcal{R}} I(\mu) \geq I(\pi_{\mathcal{A}}) + \beta.
\end{equation}

Since $I(\cdot)$ is lower semi-continuous, by \eqref{eq:infimum-sine} we obtain (possibly after adjusting $\delta$ to replace $\overline{\mathcal{R}_{\delta}}$ with $\mathcal{R}_{\delta}$) that for all sufficiently small $\delta$ we have
\[
\inf\limits_{\mu \in B(\pi_{\mathcal{A}}, \varepsilon)^c \cap \mathcal{R}_{\delta}} I(\mu) \geq I(\pi_{\mathcal{A}}) + \frac{\beta}{2}.
\]
Altogether we obtain that for all sufficiently small $\delta$ (depending on $\varepsilon$ only)
\[
\Pp^N \left( \mu^{\eta^N} \notin B(\pi_{\mathcal{A}}, \varepsilon) \big\vert \mu^{\eta^N} \in \mathcal{R}_{\delta} \right) \leq e^{N^{\gamma} \left( I(\pi_{\mathcal{A}}) + o(1) \right)} e^{- N^{\gamma} \left( I(\pi_{\mathcal{A}}) + \frac{\beta}{2} + o(1) \right)} = e^{-N^{\gamma} \left( \frac{\beta}{2} + o(1) \right)}
\]
and the right-hand side goes to $0$ as $N \to \infty$
\end{proof}

\begin{proof}[Proof of Theorem \ref{th:asymptotics-relaxed}]
Let $\Pp^N$ denote the law of the interchange process with $\alpha = 1 + \kappa$. Let $J$ be the number of all particle swaps in the process and let $M = \left\lfloor \frac{1}{2} N^{\alpha}(N-1) \right\rfloor$.

Let $\mathcal{S}_{\delta}$ denote the $\delta$-neighborhood of the reverse permuton in the Wasserstein distance on $\MM([0,1]^2)$. Observe that for sufficiently large $N$ we have for any $k \leq M$
\begin{equation}\label{eq:comparison-with-k}
\Pp^N \left( \mu^{\eta^N}_{0,T} \in \mathcal{S}_{\delta} \big\vert J = k\right) \leq \Pp^N \left( \mu^{\eta^N}_{0,T} \in \mathcal{S}_{\delta} \big\vert J = M\right)(1 + o(1)).
\end{equation}
This is because if the process has done $k$ swaps up to time $T_k$ and $\mu^{\eta^N}_{0,T_k} \in \mathcal{S}_{\delta}$, then with high probability $\mu^{\eta^N}_{0,T_M} \in \mathcal{S}_{\delta}$ as well, since $\mathcal{S}_{\delta}$ is an open set in $\MM([0,1]^2)$ and the additional number of steps done between $T_k$ and $T_M$ is $\leq \frac{1}{2} N^{\alpha}(N-1) = o(N^3)$, so typically almost all particles have negligible displacement.

On the other hand, since in the interchange process each sequence of swaps of given length is equally likely, we have
\[
\Pp^N \left( \mu^{\eta^N}_{0,T} \in \mathcal{S}_{\delta} \big\vert J = M\right) = \frac{|\mathcal{S}^{N}_{\kappa, \delta}|}{|\PP^{N}_{M}|},
\]
where $\PP^{N}_{M}$ is the set of all sequences of adjacent transpositions of length $M$. Summing \eqref{eq:comparison-with-k} over $k \leq M$ we obtain
\[
\Pp^N \left( \mu^{\eta^N}_{0,T} \in \mathcal{S}_{\delta} \cap \{ J \leq M \}\right) \leq \Pp^N \left( J \leq M \right) \frac{|\mathcal{S}^{N}_{\kappa, \delta}|}{|\PP^{N}_{M}|}(1 + o(1)).
\]
Since under $\Pp^N$ $J$ has Poisson distribution with mean $\frac{1}{2}N^{\alpha}(N-1)$, we have $\Pp^N \left( J \leq M \right) \to 1/2$ as $N \to \infty$.

To estimate the left-hand side, let $\widetilde{\Pp}^N$ be the law of the biased interchange process corresponding to the sine curve process $\pi_{\mathcal{A}}$. Recall Lemma \ref{lm:radon-nikodym-lower-bound} and for fixed $\varepsilon > 0$ let $A$ be the event that the $o(1)$ term in the formula for $\frac{\dd \Pp_{u}^{N}}{\dd \widetilde{\Pp}^{N}}(T)$ is at most $\varepsilon$. 
Let us write
\[
\Pp^N \left( \mu^{\eta^N}_{0,T} \in \mathcal{S}_{\delta} \cap \{ J \leq M \} \cap A\right) \leq \Pp^N \left( \mu^{\eta^N}_{0,T} \in \mathcal{S}_{\delta} \cap \{ J \leq M \}\right)
\]
and
\begin{align*}
& \Pp^N \left( \mu^{\eta^N}_{0,T} \in \mathcal{S}_{\delta} \cap \{ J \leq M \} \cap A \right) = \\
& = \widetilde{\Pp}^N \left( \mu^{\eta^N}_{0,T} \in \mathcal{S}_{\delta} \cap \{ J \leq M \} \cap A \right) \frac{\Pp^N \left( \mu^{\eta^N}_{0,T} \in \mathcal{S}_{\delta} \cap \{ J \leq M \}\cap A \right)}{\widetilde{\Pp}^N \left( \mu^{\eta^N}_{0,T} \in \mathcal{S}_{\delta} \cap \{ J \leq M \} \cap A \right)}.
\end{align*}
By Theorem \ref{th:lln} $\mu^{\eta^N}_{0,T} \in \mathcal{S}_{\delta}$ has high probability under $\widetilde{\Pp}^N$ and, since the particle swap rates for the biased process sum up to $\frac{1}{2}N^{\alpha}(N-1)$ (recall \eqref{eq:biased-generator}), we have similarly as for the unbiased process $\widetilde{\Pp}^N \left( J \leq M \right) \to 1/2$ as $N \to \infty$. By Lemma \ref{lm:radon-nikodym-lower-bound} $A$ is a high probability event under $\widetilde{\Pp}^N$ as well.

To estimate the remaining probabilities, we employ the formula for the Radon-Nikodym derivative from Lemma \ref{lm:radon-nikodym-lower-bound}. Since in the biased process with high probability the energy term in the derivative is close to $I(\pi_{\mathcal{A}}) = \frac{\pi^2}{6}$, we obtain
\[
\frac{\Pp^N \left( \mu^{\eta^N}_{0,T} \in \mathcal{S}_{\delta} \cap \{ J \leq M \}\cap A \right)}{\widetilde{\Pp}^N \left( \mu^{\eta^N}_{0,T} \in \mathcal{S}_{\delta} \cap \{ J \leq M \} \cap A \right)} \geq e^{-N^{\gamma} \left( \frac{\pi^2}{6} + \varepsilon \right) + o(N^\gamma)},
\]
where $\gamma = 3 - \alpha$.

Altogether we obtain
\[
|\mathcal{S}^{N}_{\kappa, \delta}| \geq |\PP^{N}_{M}| e^{-N^{\gamma} \left( \frac{\pi^2}{6} + \varepsilon \right) + o(N^\gamma)}.
\]
Since $|\PP^{N}_{M}| = (N-1)^M = e^{\lfloor \frac{1}{2} N^{\alpha}(N-1) \rfloor \log(N-1)}$ and $\varepsilon$ was arbitrary, we obtain the asymptotic lower bound on $|\mathcal{S}^{N}_{\kappa, \delta}|$ as claimed.

For the upper bound, let $\mathcal{R}_{\delta}$ be as in the previous theorem. By the large deviation upper bound of Theorem \ref{th:upper-bound} we have
\[
\Pp^N\left( \mu^{\eta^N}_{0,T} \in \mathcal{R}_{\delta} \right) \leq \exp \left\{ -N^{\gamma} \left( \inf\limits_{\mu \in \overline{\mathcal{R}_{\delta}}} I(\mu) + o(1) \right) \right\},
\]
Since $I$ is lower semi-continuous, given any $\varepsilon > 0$ we have for all sufficiently small $\delta > 0$
\[
\inf\limits_{\mu \in \overline{\mathcal{R}_{\delta}}} I(\mu) \geq \inf\limits_{\mu \in \mathcal{R}} I(\mu) - \varepsilon = I(\pi_{\mathcal{A}}) - \varepsilon,
\]
where again we have used the energy minimization property of $\pi_{\mathcal{A}}$. Since $I(\pi_{\mathcal{A}}) = \frac{\pi^2}{6}$, this implies that for any $\varepsilon > 0$ and sufficiently small $\delta > 0$
\[
\Pp^N\left( \mu^{\eta^N} \in \mathcal{R}_{\delta} \right) \leq e^{-N^{\gamma} \left( I(\pi_{\mathcal{A}}) - \varepsilon + o(1) \right)}.
\]
Now we estimate
\[
\Pp^N\left( \mu^{\eta^N} \in \mathcal{R}_{\delta} \right) \geq \Pp^N\left( \mu^{\eta^N} \in \mathcal{R}_{\delta} \big\vert J = M\right) \Pp^N \left( J = M \right) = \frac{|\mathcal{S}^{N}_{\kappa, \delta}|}{|\PP^{N}_{M}|}\Pp^N \left( J = M \right)
\]
and use the same asymptotic estimate for $|\PP^{N}_{M}|$ as in the lower bound. Since $J$ is Poisson with mean $\frac{1}{2}N^\alpha (N-1)$ under $\Pp^N$, the second term on the right-hand side is $e^{O(\log N)}$. Altogether we obtain
\[
|\mathcal{S}^{N}_{\kappa, \delta}| \leq \exp\left\{ \frac{1}{2}N^{1 + \kappa} (N-1) \log(N-1)- N^{2 - \kappa} \left( I(\pi_{\mathcal{A}}) - \varepsilon \right) + o(N^{2 - \kappa})\right\},
\]
which proves the desired asymptotic upper bound on $|\mathcal{S}^{N}_{\kappa, \delta}|$.
\end{proof}

\end{section}

\bibliography{bibliography}{}
\bibliographystyle{amsalpha}

\bigskip
\noindent
Michał Kotowski \\
Institute of Mathematics of the Polish Academy of Sciences \\
ul. Śniadeckich 8 00-656 Warszawa, Poland\\
\noindent
{E-mail:} {\tt michal.kotowski@mimuw.edu.pl} \\

\bigskip
\noindent
 B\'{a}lint Vir\'{a}g \\
 Department of Mathematics, University of Toronto \\
 40 St George St.
 Toronto, ON, M5S 2E4, Canada
 \\
\noindent
{E-mail:} {\tt balint@math.toronto.edu} \\
\end{document}